\newtheorem*{theorem*}{Theorem}
\newtheorem*{question*}{Question}
\newtheorem*{theoremA}{Theorem A}
\newtheorem*{theoremB}{Theorem B}
\newtheorem*{corollaryC}{Corollary C}
\newtheorem*{Goblotstheorem}{Goblot's vanishing theorem}
\newtheorem{theorem}{Theorem}[section]
\newtheorem{proposition}[theorem]{Proposition}
\newtheorem*{proposition*}{Proposition}
\newtheorem{lemma}[theorem]{Lemma}
\newtheorem*{lemma*}{Lemma}
\newtheorem*{corollary*}{Corollary}
\newtheorem{fact}[theorem]{Fact}
\newtheorem*{fact*}{Fact}
\theoremstyle{definition}
\newtheorem{definition}[theorem]{Definition}
\newtheorem*{definition*}{Definition}
\newtheorem{claim}[theorem]{Claim}
\newtheorem*{claim*}{Claim}
\newtheorem*{conjecture*}{Conjecture}
\newtheorem{question}[theorem]{Question}
\theoremstyle{definition}
\newtheorem{example}[theorem]{Example}
\theoremstyle{remark}
\newtheorem*{example*}{Example}
\newtheorem*{remark*}{Remark}
\newtheorem*{note*}{Note}
\newcommand{\Z}{\ensuremath{\mathbb{Z}}}
\newcommand{\A}{\ensuremath{\mathbf{A}}}
\newcommand{\lang} {\ensuremath{\langle} }
\newcommand{\rang}{\ensuremath{ \rangle} }
\newcommand{\lB}{\ensuremath{\lbrace} }
\newcommand{\rB}{\ensuremath{ \rbrace} }
\newcommand{\sbeq}{\ensuremath{ \subseteq }}
\newcommand{\defeq}{\vcentcolon=}
   \def\MR#1{}
\begin{document}

\title{Higher limits of wider systems}
\author[J. Bergfalk]{Jeffrey Bergfalk}
\address{Departament de Matem\`{a}tiques i Inform\`{a}tica \\
Universitat de Barcelona \\
Gran Via de les Corts Catalanes 585 \\ 08007 Barcelona, Catalonia}
\email{bergfalk@ub.edu}
\urladdr{https://www.jeffreybergfalk.com/}

\author[M. Casarosa]{Matteo Casarosa}
\address{
Institut de Math\'ematiques de Jussieu - Paris Rive Gauche (IMJ-PRG)\\
Universit\'e Paris Cit\'e\\
B\^atiment Sophie Germain\\
8 Place Aur\'elie Nemours \\ 75013 Paris, France, and Dipartimento di Matematica, Universit\`{a} di Bologna, Piazza di
Porta S. Donato, 5, 40126 Bologna,\ Italy}
\email{matteo.casarosa@imj-prg.fr}
\urladdr{https://webusers.imj-prg.fr/~matteo.casarosa/}

\email{matteo.casarosa@unibo.it}
\urladdr{https://www.unibo.it/sitoweb/matteo.casarosa/en}

\subjclass[2020]{03E35, 03E75, 18G10}
\keywords{derived limit, pro-category of abelian groups, additivity, nontrivial coherence, constructible universe}
\thanks{This work grew out of a 2023 visit, supported by Fondation Sciences Math\'{e}matiques de Paris, of the second author to the University of Barcelona; the first author was additionally supported by Marie Sk\l odowska Curie (CatT 101110452) and Ram\'{o}n y Cajal Fellowships.}

\begin{abstract}
Write $\mathbf{A}_\lambda$ for what might be described as the most elementary nontrivial inverse system of abelian groups indexed by the functions from the cardinal $\lambda$ to the set of natural numbers.
The question of whether for any fixed $n$ the derived limit $\mathrm{lim}^n\,\mathbf{A}_\lambda$ may vanish for only a nonempty subset of the class of infinite cardinals $\lambda$ is recorded in both \cite{Be17} and \cite{Ban23}, and bears closely on several related further ones.
We answer this question in the affirmative; in fact, we show the maximal possibility, namely that this can simultaneously happen in every degree $n>1$.
\end{abstract}

\maketitle

\section{Introduction}
\label{sect:intro}

Let $\mathbf{A}_1$ denote the inverse sequence
$$\cdots\to\mathbb{Z}^{n+1}\to\mathbb{Z}^n\to\cdots\to\mathbb{Z}^2\to\mathbb{Z}$$
with bonding maps the natural projections.
More generally, for any cardinal $\lambda$, let $\mathbf{A}_\lambda$ denote the sum, in the pro-category of abelian groups, of $\lambda$ many copies of $\mathbf{A}_1$; its terms are then $\bigoplus_{i<\lambda}\mathbb{Z}^{f(i)}$, as $f$ ranges within the pointwise partial ordering of functions from $\lambda$ to $\mathbb{N}$, with bonding maps again the natural projections.

The higher derived limits of $\mathbf{A}_1$ are all easily seen to vanish, hence for any $n>0$, any nonvanishing $\mathrm{lim}^n\,\mathbf{A}_\lambda$ witnesses the failure of the functor $\mathrm{lim}^n:\mathsf{Pro(Ab)}\to\mathsf{Ab}$ to be additive.
This, in turn, carries consequences in more general mathematics, such as those
in homology theory first underscored in \cite{MP88} or those in condensed mathematics more recently surveyed
in \cite{BeLH24}.
In fact it was \cite{MP88} which first established the possibility of such failures; therein it was shown that $\mathrm{lim}^1\,\mathbf{A}_{\aleph_0}\neq 0$ follows from the continuum hypothesis.
Soon thereafter, \cite{DSV89} showed that $\mathrm{lim}^1\,\mathbf{A}_{\aleph_0}=0$ is consistent with the $\mathsf{ZFC}$ axioms as well.
Together, these works touched off a line of set theoretic research into the derived limits of $\mathbf{A}_{\aleph_0}$ (within which the latter is more typically and simply denoted as $\mathbf{A}$) which continues to this day \cite{Tod89,Kam93,Tod98,Be17,Ban23,VV24,BBMT24,Cas24,CLH24,BanAk25,BanNon25}.
Along the way, many of the most fundamental questions about these derived limits have been answered; consistency proofs of ``$\mathrm{lim}^n\,\mathbf{A}=0$ for all $n>0$'', for example, were recorded in both \cite{BeLH21} and \cite{BHLH23}.
The principal questions which remain fall under two broad headings: (i) relations between the vanishing of $\mathrm{lim}^n\,\mathbf{A}$ and the size of the continuum (see \cite[Question 7.1]{BeLH21}), and (ii) extensions of the tools and results referenced above to generalizations of the system $\mathbf{A}$.
The present work falls under the second heading, wherein three main sorts of theorems shape the terrain.

The first of these concern generalizations of $\mathbf{A}$ at the level of its terms and transition maps.
The inverse systems comprising the natural generalizations are termed \emph{$\Omega$-systems} and, more broadly, \emph{$\Omega_\lambda$-systems} in \cite{BBM23} and \cite{Ban23,BanAk25}, respectively, wherein the fundamental results on $\mathbf{A}$ are shown to extend to these much more general classes.
The second of these concerns generalizations of the ``vertical'' or codomain parameter in $\mathbf{A}$'s index-set to cardinals $\kappa>\aleph_0$.
Here the representative theorem is \cite[Theorem 2.10]{BeLH24}, showing that the associated derived limits are nonzero for any $\kappa\geq\aleph_1$.
Thus it's only for sums of inverse systems of countable height (i.e., of \emph{towers}) that we may hope for positive additivity results, and it is this which has determined our focus on $\mathbf{A}$'s ``horizontal'' generalizations $\mathbf{A}_\lambda$.
This brings us to the area's third main type of theorem \cite[Theorem 5.1]{Be17}:

\begin{theorem*}
$\mathrm{lim}^1\,\mathbf{A}_{\aleph_0}=0$ implies that $\mathrm{lim}^1\,\mathbf{A}_{\lambda}=0$ for every cardinal $\lambda$.
\end{theorem*}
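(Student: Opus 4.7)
\textit{Proof plan.} I would proceed by transfinite induction on $\lambda$, with base case $\lambda \leq \aleph_0$: for $\lambda = \aleph_0$ this is the hypothesis, and for finite $\lambda$ the bonding maps of $\A_\lambda$ are surjective, so Mittag--Leffler gives $\mathrm{lim}^1\,\A_\lambda = 0$.

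For the inductive step, fix $\lambda > \aleph_0$ and assume $\mathrm{lim}^1\,\A_\mu = 0$ for every $\mu < \lambda$. A $1$-cocycle $\Phi$ for $\A_\lambda$, realized in the standard cochain complex, is an assignment of $\Phi(f,g) \in A_f = \bigoplus_{i<\lambda}\Z^{f(i)}$ to each chain $f \leq g$ in $\N^\lambda$ satisfying the cocycle relation, and the task is to produce a $0$-cochain $\Psi$, with $\Psi(f) \in A_f$, satisfying $d\Psi = \Phi$. Identifying, for each $X \subsetneq \lambda$ with $|X| < \lambda$, the poset $\N^X$ with the sub-poset of functions in $\N^\lambda$ vanishing off $X$, the restriction $\Phi|_{\N^X}$ is a $1$-cocycle of $\A_X \cong \A_{|X|}$, and is therefore trivializable by the inductive hypothesis.

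Fix a continuous increasing filtration $\lambda = \bigcup_{\alpha < \mathrm{cf}(\lambda)} X_\alpha$ with each $|X_\alpha| < \lambda$, and build $0$-cochains $\Psi_\alpha$ on $\N^{X_\alpha}$ trivializing $\Phi|_{\N^{X_\alpha}}$ by induction on $\alpha$. At a successor stage, an inductively obtained trivialization $\widetilde{\Psi}_{\alpha+1}$ restricted to $\N^{X_\alpha}$ differs from $\Psi_\alpha$ by a $0$-cocycle of $\A_{X_\alpha}$; this $0$-cocycle lifts along the direct-sum splitting $\A_{X_{\alpha+1}} \cong \A_{X_\alpha} \oplus \A_{X_{\alpha+1}\setminus X_\alpha}$ to a $0$-cocycle of $\A_{X_{\alpha+1}}$, which, subtracted from $\widetilde{\Psi}_{\alpha+1}$, yields a $\Psi_{\alpha+1}$ extending $\Psi_\alpha$.

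\textit{Main obstacle.} At a limit $\alpha$, the value $\Psi_\alpha(f)$ for $f \in \N^{X_\alpha}$ must be a finitely supported element of $A_f = \bigoplus_{i \in X_\alpha}\Z^{f(i)}$, but the previously constructed $\Psi_\beta(f|_{X_\beta}) \in A_{f|_{X_\beta}}$, for $\beta < \alpha$, have no a priori reason to stabilize in support as $\beta \to \alpha$. Securing this stabilization is the crux of the argument: it will force a strengthening of the inductive hypothesis --- producing trivializations whose values sit in $\beta$-uniformly bounded supports --- and will likely constitute the bulk of any proof. The finite-support structure of the groups $A_f$ should nevertheless provide just enough rigidity, via a Mittag--Leffler-style stabilization on supports and a careful choice of the successor-stage $0$-cocycle corrections, for this to be carried out.
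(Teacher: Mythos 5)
This theorem is quoted in the paper with a citation to \cite{Be17}; there is no in-text proof here to compare against, so I assess your plan on its own terms. The gap you flag at limit stages is not a technical loose end but the whole theorem, and your proposed remedy does not close it. At a limit $\alpha$ of the filtration, $\Psi_{<\alpha}=\bigcup_{\beta<\alpha}\Psi_\beta$ lives only on the ideal of functions in $\mathbb{N}^{X_\alpha}$ whose support is contained in some $X_\beta$ with $\beta<\alpha$, and this ideal is \emph{not} $\leq$-cofinal in $\mathbb{N}^{X_\alpha}$. For $g\in\mathbb{N}^{X_\alpha}$ of unbounded support, the constraints $\pi_{f,g}\Psi_\alpha(g)=\Psi_{<\alpha}(f)+\Phi(f,g)$, ranging over bounded $f\leq g$, determine a single candidate function $\psi$ on $I(g)=\bigcup_{\mathrm{bdd}\,f\leq g}I(f)$, but nothing forces $\psi$ to have \emph{finite} support, i.e., to lie in $A_g$. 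When $\mathrm{cf}(\alpha)=\omega$ each slice $\psi\restriction I(f)$ is finite while $\psi$ itself may not be, and ``Mittag--Leffler on supports'' offers no purchase: the bonding maps of $\mathbf{A}_{X_\alpha}$ are already surjective, so the classical Mittag--Leffler hypothesis holds vacuously. What you actually need is a strengthened inductive hypothesis that controls the successor-stage correction cocycles uniformly enough to force limit-stage stabilization; you have neither formulated such a hypothesis nor shown it propagates through the recursion.

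The paper itself is strong evidence that no generic formulation works. Lemma~\ref{lem:1} constructs, for any prescribed $E\subseteq{^{\omega_1}}\omega$ of size $\aleph_1$, a nontrivial coherent family along a chain of bounded-support functions whose restriction to each $I(f)$, $f\in E$, \emph{is} trivial --- exactly the limit-stage obstruction your union must avoid --- and Theorem A harnesses this to show the analogue of the present theorem \emph{fails} for $\lim^2$. Since the statement is true in degree one, any proof must exploit structure particular to $\lim^1$, and your plan nowhere identifies what that is. The pieces you do spell out --- the base case, the successor-stage lift of a $0$-cocycle through the finite direct-sum splitting of $\lim$, and the final step for $\lambda$ of uncountable cofinality (where any infinite subset of $I(g)$ is covered by $I(f)$ for some bounded $f\leq g$, forcing the extension) --- are all essentially sound, but they are not where the theorem lives.
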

More colloquially, the functor $\mathrm{lim}^1:\mathsf{Pro(Ab)}\to\mathsf{Ab}$ is additive over arbitrary sums of towers if and only if it is additive over countable sums of towers.
The status of this implication in higher degrees has long been the obvious next question; it figures as both \cite[Question 1]{Be17} and \cite[Question 7.2]{Ban23}, and has thus far formed the main avenue of approach to the more general additivity question recorded in our conclusion.
\begin{question*}
Does the above theorem hold for $\mathrm{lim}^n$ in degrees $n>1$?
\end{question*}
In the present work, we show, in two distinct ways, that the answer is no, in the process answering the question's $\Omega_\lambda$-system variants \cite[Question 7.4]{Ban23} and \cite[Question 7.5]{Ban23} as well.
\begin{theoremA}
Assuming the continuum hypothesis, there exists an $\aleph_2$-closed poset $\mathbb{P}$ forcing that $\mathrm{lim}^2\,\mathbf{A}_{\aleph_0}=0$ and $\mathrm{lim}^2\,\mathbf{A}_{\aleph_1}\neq 0$.
\end{theoremA}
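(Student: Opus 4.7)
The strategy is to define $\mathbb{P}$ as an $\aleph_2$-closed poset that generically adjoins a $2$-cocycle $\dot\sigma$ on ${}^{\aleph_1}\mathbb{N}$ via partial approximations, with $\mathrm{lim}^2\mathbf{A}_{\aleph_0}=0$ preserved for free from the ground model and nontriviality of $\dot\sigma$ argued by genericity. I would work in the Roos-type cochain complex $(C^\bullet_\lambda,d)$ computing $\mathrm{lim}^n\mathbf{A}_\lambda$ (formalised in the $\Omega$-system setting of \cite{BBM23,Ban23}): an $n$-cochain assigns to each nondecreasing $(n+1)$-tuple $f_0\leq\cdots\leq f_n$ in ${}^\lambda\mathbb{N}$ an element of $\bigoplus_{i<\lambda}\mathbb{Z}^{f_0(i)}$, with $d$ the standard alternating coboundary.

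For the preservation half: under CH, $|{}^{\aleph_0}\mathbb{N}|=\aleph_1$, so every $2$-cochain on ${}^{\aleph_0}\mathbb{N}$ is coded by an $\aleph_1$-sequence of elements of countable groups; an $\aleph_2$-closed forcing adds no new $\aleph_1$-sequences, so $C^\bullet_{\aleph_0}$ is the same complex in $V$ and $V^{\mathbb{P}}$. Hence $\mathrm{lim}^2\mathbf{A}_{\aleph_0}$ is preserved, and one may assume the ground model already satisfies $\mathrm{lim}^2\mathbf{A}_{\aleph_0}=0$ (as is known to be consistent with CH; cf.\ \cite{BeLH21,BHLH23}).

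I would then define $\mathbb{P}$ by taking conditions to be pairs $(D,s)$ with $D\subseteq{}^{\aleph_1}\mathbb{N}$ of cardinality $<\aleph_2$ and $s$ a $2$-cocycle on the nondecreasing $3$-tuples from $D$, ordered by functional extension. Unions of descending sequences of length $\leq\aleph_1$ are lower bounds, so $\mathbb{P}$ is $\aleph_2$-closed; density arguments produce a generic $2$-cocycle $\dot\sigma$ defined on all of ${}^{\aleph_1}\mathbb{N}$. To argue nontriviality, suppose $p\Vdash d\dot\tau=\dot\sigma$ for some name $\dot\tau$ of a $1$-cochain. By $\aleph_2$-closure descend below $p$ through an $\omega_1$-sequence to obtain a lower bound $q=(D_q,s_q)$ and a ground-model $\tilde\tau$ recording the decided values of $\dot\tau$ on all $2$-chains from $D_q\cup\{g\}$ for some chosen $g\in{}^{\aleph_1}\mathbb{N}\setminus D_q$. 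Extending $q$ by adjoining $g$ to the domain, I may set $s(f_0,f_1,g)$ freely for any $f_0\leq f_1$ in $D_q$ dominated by $g$ (no $4$-chain constraint applies); choosing this value distinct from $d\tilde\tau(f_0,f_1,g)$ yields a condition forcing $\dot\sigma(f_0,f_1,g)\neq d\dot\tau(f_0,f_1,g)$, the required contradiction.

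The main obstacle is the descent step: for certain names $\dot\tau$, deciding $\dot\tau(f_0,g)$ and $\dot\tau(f_1,g)$ may already force $g\in D$, thereby committing $s(f_0,f_1,g)$ before I have a chance to choose it freely. Resolving this requires either arranging that $g$ be chosen generically \emph{after} the descent (which in turn requires uniformizing the descent over possible $g$), or enhancing the condition form to include a ``promise'' component that tracks upcoming $\dot\tau$-decisions while protecting designated $3$-chains from commitment. Either route demands a delicate combinatorial argument that exploits both the $\aleph_2$-many elements of ${}^{\aleph_1}\mathbb{N}\setminus D_q$ and the freedom one has in extending a partial $2$-cocycle across a fresh element when no $4$-chain constraints are yet present, and it is here that the bulk of the real work of the theorem will lie.
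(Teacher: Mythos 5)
Your forcing setup and preservation half are essentially the paper's (indeed the preservation is simpler than you make it: under CH, ${}^\omega\omega$ has $\leq^*$-cofinality $\aleph_1$, so Goblot's theorem already gives $\mathrm{lim}^2\mathbf{A}_{\aleph_0}=0$ in $V$ outright — no appeal to the consistency results of \cite{BeLH21,BHLH23} is needed, only to the $\aleph_2$-closure adding no new functions $\omega_1\to\omega$). But the nontriviality half has a genuine gap, and you have correctly diagnosed it yourself: to "choose $s(f_0,f_1,g)$ freely" you need $g\notin D_q$, yet to pin down $d\tilde\tau(f_0,f_1,g)$ you must decide $\dot\tau(f_0,g)$ and $\dot\tau(f_1,g)$, which in the natural poset forces $g\in D_q$ and hence commits $s$ at exactly the triples you wanted to twist. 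Your two escape routes ("choose $g$ generically after the descent", "promise components") are precisely where the theorem lives, and neither is developed. There is also a second, subtler issue that your plan would hit even if the descent problem were solved: because coherence and triviality are mod-finite relations, disagreeing with $d\tilde\tau$ at a single triple, or even at finitely many coordinates of a single triple, is invisible. A nontriviality witness has to be an $\omega_1$-length coherent obstruction, not a pointwise one.

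The paper's resolution does not try to decide $\dot\Psi$ at $g$ at all. It instead diagonalizes against every possible future value of $\psi_g$ simultaneously. Fix $q\leq p$ deciding $\dot\psi_f$ for all $f\in E_q$ with $E_q\supseteq B$ (the bounded-support functions) and $\wedge$-closed, and then pick a fresh $g\notin E_q$. The initial truncations $g^\alpha$ all lie in $B\subseteq E_q$, so $\psi_{g^\alpha}$ is already determined; a trivialization would then force $\psi_g\restriction I(g^\alpha)=^*\varphi^r_{g^\alpha g}+\psi_{g^\alpha}$ for every $\alpha$, and the strategy is to assign the edges $\varphi^r_{g^\alpha g}:=\theta_\alpha-\psi_{g^\alpha}$ where $\Theta=\langle\theta_\alpha\rangle$ is a \emph{nontrivially} coherent family along $\{g^\alpha\mid\alpha<\omega_1\}$, so that no single function $\psi_g$ can satisfy all $\omega_1$ constraints. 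The missing ingredient, and the real combinatorial content, is the paper's Lemma \ref{lem:1}: given any $E\subseteq{}^{\omega_1}\omega$ of size $\aleph_1$ (here $E=E_q$), one can choose $g$ and a nontrivial coherent $\Theta$ whose restriction $\Theta\restriction\restriction I(f)$ is \emph{trivial} for every $f\in E$. This second property (a strengthening of the classical finite-to-one condition) is exactly what makes the extension of $r$ to a full 2-coherent condition possible — it is what lets the equations $\varphi^r_{f^\alpha,(f\vee g)^\alpha}+\varphi^r_{(f\vee g)^\alpha,f\vee g}-\varphi^r_{f^\alpha,f}-\varphi^r_{f,f\vee g}=^*0$ be satisfied consistently with the already-decided $\psi$-values. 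Without this lemma, the density argument has nothing to lean on; with it, no "promise" machinery is needed.
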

This theorem derives additional interest from both the relative simpicity and the combinatorial mainspring of its argument; the latter, in particular, consists in a construction of coherent families satisfying a strengthened version of the classical nontriviality condition of finite-to-one-ness.

We then turn to a stronger theorem, one which forms this paper's main labor.
As background, recall how many of the main results on the derived limits of $\mathbf{A}_\lambda$ and related systems have amounted to essentially linear constructions \cite{MP88,Be17,BeLH21,BBM23,VV24}.
Such approaches, however, are inherently limited in their reach (cf.\ Lemma \ref{lem:no_long_chains} below); accordingly, the development of nonlinear construction techniques has been a particular focus of this paper's second author \cite{Cas24,CLH24}.
The following theorem represents a further advance in these techniques; its key device is the construction of nontrivial $n$-coherent families not along \emph{chains} but along \emph{special $\lambda$-Aronszajn trees} in the partial order $({^\lambda}\omega,\leq)$.
\begin{theoremB}
Assume the generalized continuum hypothesis and that $\Diamond(S_i^{i+1})$ holds for all positive $i<\omega$.
Then for all $n<\omega$ and cardinals $\lambda$, $\lim^{n+1} \A_{\lambda} = 0$ if and only if $\lambda < \aleph_n$.
\end{theoremB}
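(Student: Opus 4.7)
The plan is to prove the biconditional direction by direction, reducing each to a single critical cardinal and then inducting on $n$. Since $\A_\mu$ is a direct summand of $\A_\lambda$ in $\mathsf{Pro(Ab)}$ whenever $\mu \leq \lambda$, nonvanishing of $\lim^{n+1}$ propagates upward in $\lambda$ while vanishing propagates downward; so it suffices to prove, for each $n < \omega$, that (a) $\lim^{n+1}\A_{\aleph_n} \neq 0$, and (b) $\lim^{n+1}\A_{\aleph_{n-1}} = 0$ (vacuous at $n = 0$).

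For (a), the case $n = 0$ is the Mardesic--Prasolov theorem under CH, subsumed in GCH, and the case $n = 1$ can be handled by a classical $\aleph_1$-Aronszajn tree in ZFC in tandem with the line of argument shaping Theorem A. For $n \geq 2$, as previewed in the introduction, I would construct a nontrivial $(n+1)$-coherent family of homomorphisms on ${}^{\aleph_n}\omega$ by organizing the recursion around a special $\aleph_n$-Aronszajn subtree $T$ of $({}^{\aleph_n}\omega, \leq)$. Under GCH one would build $T$ and the coherent family along it jointly, by transfinite recursion of length $\aleph_n$, with $T$ decomposed into $\aleph_{n-1}$ antichains and the family extended at each node so as to preserve coherence with all previously chosen data. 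At the ordinals in $S_{n-1}^n$, where $\Diamond(S_{n-1}^n)$ supplies a candidate trivialization $g$, the construction would be diagonalized against $g$. Because $T$ is Aronszajn, no cofinal branch supports a global trivialization, and a reflection argument transfers this to nontriviality of the family on all of ${}^{\aleph_n}\omega$.

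For (b), I would argue by transfinite recursion along a $\leq$-compatible enumeration of ${}^{\aleph_{n-1}}\omega$, whose length under GCH is $\aleph_n$. Given an $(n+1)$-coherent family $h$, one builds a trivializing $n$-cochain $g$ stage by stage: successor and small-cofinality limits are handled by direct amalgamation, and at a limit $\alpha$ of cofinality $\aleph_{n-1}$ the obstruction to extending $g$ lives in a $\lim^n$ group associated with an $\aleph_{n-1}$-sized cofinal subfamily below $\alpha$; this vanishes by the inductive hypothesis $\lim^n \A_\mu = 0$ for $\mu < \aleph_{n-1}$, applied after thinning the cofinal subfamily further.

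The main obstacle, I expect, lies in (a): the introduction flags that linear (chain-indexed) constructions are inherently limited in their degree reach (cf.\ Lemma \ref{lem:no_long_chains}), and the technical heart of the theorem is the replacement of chains with special Aronszajn trees. The delicate point is the coordination, at each $\Diamond$-guessing level, of diagonalization against the guessed trivialization with preservation of $(n+1)$-coherence against every previously placed node of $T$; managing this bookkeeping uniformly in $n$ is what I expect to dominate the proof.
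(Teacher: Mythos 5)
Your overall frame is fine and matches the paper's: nonvanishing propagates upward since $\A_\mu$ is a finite direct summand of $\A_\lambda$ (and $\lim^{n+1}$ is additive over binary biproducts), so everything reduces to nonvanishing at $\aleph_n$ and vanishing at $\aleph_{n-1}$. For the vanishing half you are re-deriving a special case of Goblot: under GCH the index poset $({}^{\aleph_{n-1}}\omega,\leq)$ has cofinality $\aleph_n$ and the transition maps are surjective, so $\lim^{n+1}\A_{\aleph_{n-1}}=0$ is immediate; note, incidentally, that at your cofinality-$\aleph_{n-1}$ limit stages the obstruction lives in $\lim^n$ of a small-cofinality \emph{subposet} of ${}^{\aleph_{n-1}}\omega$, which is not of the form $\A_\mu$, so the inductive hypothesis you invoke is not the right tool there — what is really being used is Goblot again.

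The genuine gap is in part (a), and it is structural, not cosmetic. Under GCH the set ${}^{\aleph_n}\omega$ has cardinality and $\leq^*$-cofinality $\aleph_{n+1}$, so any construction of a globally nontrivial $(n+1)$-coherent family must be a recursion of length $\aleph_{n+1}$ along a filtration of the \emph{whole} index set; your length-$\aleph_n$ recursion along an $\aleph_n$-Aronszajn subtree with $\Diamond(S_{n-1}^{n})$ cannot even reach a cofinal subfamily, and your closing ``reflection argument transfers this to nontriviality on all of ${}^{\aleph_n}\omega$'' is precisely the step that fails in general: extensions of nontrivial coherent families can be trivial (this is the mechanism exploited by Lemma \ref{lem:1} and Theorem A), so nontriviality on a small piece buys nothing by itself. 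The paper instead uses a special $\aleph_{n+1}$-Aronszajn tree to manufacture (Lemma \ref{lemmatree}) a length-$\omega_{n+1}$ filtration of ${}^{\omega_n}\omega$ by ideals $P_\alpha$ of cofinality at most $\aleph_n$, together with strong chains $Q_\alpha$ and bounds $f_\alpha$; it runs the recursion along $\omega_{n+1}$, using $\Diamond(S_n^{n+1})$ (not $\Diamond(S_{n-1}^n)$) to guess codes of putative trivializations restricted to the tree image, and defeats each guess by choosing between the two candidate extensions $\partial\widetilde{\Theta}^\alpha$ and $\partial(\widetilde{\Theta}^\alpha+\widetilde{\Xi}^\alpha)$. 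That choice only works because one has in hand, for each relevant $\alpha$, an $n$-coherent but non-$n$-trivial family $\Xi^\alpha$ on $P_\alpha$ whose nontriviality is witnessed already on the chain $Q_\alpha$; producing these is itself the bulk of the paper (Sections \ref{sect:twistable}--\ref{sect:nontriviality}), requires $\Diamond(S_k^{k+1})$ for all $k<n$, and forces the introduction of twistable sets because $|I(f)|=\aleph_n$ is too large to be coded by a $\Diamond(S_k^{k+1})$-sequence. Your sketch contains no analogue of the two-candidate step, no source of these lower-degree nontrivial families, and no answer to this coding-size obstruction; moreover ``because $T$ is Aronszajn, no cofinal branch supports a global trivialization'' cannot bear weight, since trivializations are not branches — the Aronszajn property is used only to keep levels of size $\leq\aleph_n$, hence the ideals' cofinalities small (so Goblot supplies the $\Theta^\alpha$) and the coding feasible. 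Finally, your claim that the $n=1$ instance of the nonvanishing is a ZFC fact via a classical Aronszajn tree cannot be right as stated: if $\lim^2\A_{\aleph_1}\neq 0$ were provable in ZFC, Question \ref{ques:conclusion} would be settled in the negative, whereas it is open; the paper obtains it only from GCH together with $\Diamond(S_1^2)$.
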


In plainer English, it is consistent with the \textsf{ZFC} axioms that the groups $\mathrm{lim}^n\,\mathbf{A}_\lambda$ are simultaneously nonzero everywhere that they can be --- or, more precisely, wherever they're not required to be zero by Goblot's vanishing theorem (\cite{Gob70}, although our formulation incorporates the variant \cite[Theorem 2.4]{VV24}).
\begin{Goblotstheorem}
\label{gob}
If the indexing order of a directed inverse system $\mathbf{X}$ of abelian groups is of cofinality $\aleph_k$ for some $k<\omega$ then $\mathrm{lim}^{n}\,\mathbf{X}=0$ for all $n>k+1$; if, in addition, the transition maps of $\mathbf{X}$ are all surjective, then $\mathrm{lim}^{n}\,\mathbf{X}=0$ for $n=k+1$ as well.
\end{Goblotstheorem}
\noindent The transition maps of any $\mathbf{A}_\lambda$ are all surjective and, as is well-known, the premises of our main theorem all hold in, for example, the constructible universe $L$.
Hence:
\begin{corollaryC}
In G\"{o}del's constructible universe, $\mathrm{lim}^n\,\mathbf{A}_\lambda\neq 0$ in every instance not proscribed by Goblot's vanishing theorem.
\end{corollaryC}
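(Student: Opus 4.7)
The plan is to deduce Corollary C from Theorem B by verifying (i) that its hypotheses are satisfied in $L$, and (ii) that the non-vanishing range of Theorem B is precisely the complement of the range of vanishing forced by Goblot's theorem.

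For (i), Gödel's analysis of $L$ supplies GCH, and the fine-structural combinatorics of $L$ give $\Diamond_\kappa(S)$ for every regular uncountable $\kappa$ and every stationary $S \sbeq \kappa$; in particular $\Diamond(S_i^{i+1})$ holds in $L$ for every positive $i<\omega$. Theorem B therefore applies in $L$ and yields $\lim^{n+1}\A_\lambda = 0 \iff \lambda < \aleph_n$ for every $n<\omega$ and every cardinal $\lambda$. For (ii), I would compute the cofinality of the index poset $({}^\lambda\omega,\leq)$ under GCH: its cardinality is $2^\lambda=\lambda^+$, which bounds the cofinality from above; and a standard diagonalization --- given $\{f_\xi : \xi<\lambda\}$, pick pairwise distinct $\beta_\xi < \lambda$ and define $g(\beta_\xi)=f_\xi(\beta_\xi)+1$, zero elsewhere --- bounds it from below by $\lambda^+$. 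Thus $\cof({}^\lambda\omega,\leq)=\lambda^+$ for every infinite $\lambda$.

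With these two ingredients in hand the corollary is essentially immediate. For $\lambda = \aleph_k$ with $k<\omega$, the index has cofinality $\aleph_{k+1}$, and since the transition maps of $\A_\lambda$ are the surjective natural projections, Goblot's theorem gives $\lim^m \A_{\aleph_k} = 0$ for every $m \geq k+2$; Theorem B supplies the converse, $\lim^m \A_{\aleph_k} \neq 0$ for $1 \leq m \leq k+1$. For $\lambda \geq \aleph_\omega$, Goblot's hypothesis (cofinality $\aleph_k$ for some $k<\omega$) fails outright, so Goblot imposes no finite-degree vanishing, while Theorem B gives $\lim^m \A_\lambda \neq 0$ for every $m \geq 1$. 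All told, the two statements are exactly complementary, yielding Corollary C. No substantial obstacle arises beyond Theorem B itself; the only small pitfall is the cardinal arithmetic $|{}^\lambda\omega| = 2^\lambda$, which must not be confused with $\lambda^{\aleph_0}$.
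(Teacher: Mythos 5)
Your proposal is correct and follows essentially the same route as the paper, which deduces Corollary C in one line by noting that the transition maps of each $\mathbf{A}_\lambda$ are surjective and that GCH and $\Diamond(S_i^{i+1})$ hold in $L$, so Theorem B applies and its vanishing pattern is exactly complementary to Goblot's. Your added computation that $\mathrm{cf}({}^\lambda\omega,\leq)=\lambda^+$ under GCH is just an explicit spelling-out of what the paper leaves implicit (cf.\ its Fact on $\mathrm{cf}({}^\lambda\omega,\leq^*)$), and it is carried out correctly.
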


This paper's organization is as follows.
We record our most fundamental notations in the course of Section \ref{sect:ntcs+Ak}; further notations are recorded in Section \ref{sect:filtrations} as well.
We prove Theorem A in Section \ref{sect:thmA} and Theorem B in Sections \ref{sect:filtrations} through \ref{sect:thmB}.
Our conclusion then records what figure as the main remaining open questions.

\subsection*{Acknowledgements} We wish to thank Nathaniel Bannister, Tanmay Inamdar, and Alessandro Vignati for valuable discussions of this work.

\section{Nontrivial coherence and the systems $\mathbf{A}_\lambda$}
\label{sect:ntcs+Ak}

Broadly speaking, the study of the derived limits of $\mathbf{A}_\lambda$ and of related systems is a study of \emph{nontrivial coherence}; let us therefore begin by recalling a classical example of the latter.

\begin{example}
\label{ex:fintoone}
There exists a family of functions $\Phi=\langle\varphi_\alpha:\alpha\to\omega\mid\alpha<\omega_1\rangle$ with the following two properties:
\begin{enumerate}
\item $\varphi_\alpha$ and $\varphi_\beta\restriction\alpha$ differ only finitely often for every $\alpha<\beta<\omega_1$;
\item $\varphi_\alpha^{-1}(i)$ is finite for every $\alpha<\omega_1$ and $i<\omega$.
\end{enumerate}
Functions $\varphi_\alpha$ satisfying the weak injectivity condition of item (2) are often called \emph{finite-to-one}, for concision. Since no function $\varphi:\omega_1\to\omega$ is finite-to-one, condition (2) limits the extent of the phenomena of item (1); in particular, any $\Phi$ satisfying (1) and (2) satisfies the following as well:
\begin{enumerate}
\item[(2')] for every $\varphi:\omega_1\to\omega$, $\varphi_\alpha$ and $\varphi\restriction\alpha$ differ infinitely often for some $\alpha<\omega_1$.
\end{enumerate}
Families $\Phi$ satisfying conditions (1) and (2') are called \emph{coherent} and \emph{nontrivial}, respectively; more precisely, they are nontrivially coherent modulo the ideal of finite subsets of $\omega_1$.
In what follows, we denote such \emph{mod finite} relations with an asterisk, recording those as in item (1) as $\varphi_\alpha =^* \varphi_\beta\restriction\alpha$, for instance.

See \cite{Tod07} for canonical examples of such $\Phi$, as well as for their close connection to a range of infinitary combinatorics and to walks on the ordinals in particular.
Such $\Phi$ also witness the nonvanishing both of the first sheaf cohomology group of $\omega_1$ and of $\mathrm{lim}^1$ of the inverse system whose terms are the free abelian groups $\mathbb{Z}^{(\alpha)}$ indexed by $\alpha<\omega_1$ and whose transition maps $\pi_{\alpha\beta}:\mathbb{Z}^{(\beta)}\to\mathbb{Z}^{(\alpha)}$ $(\alpha<\beta<\omega_1)$ are the natural projections.
\end{example}

The nonvanishing of a significant range of derived limit expressions traces to basic variations on the above parameters.
For example, for any cardinal $\lambda$ and function $f:\lambda\to\omega$, write $I(f)$ for the set
$\{(i,j)\in\lambda\times\omega\mid j\leq f(i)\}$
of coordinate pairs falling below the graph of $f$ (i.e., for the \emph{sottografico} of $f$; pl.\ \emph{sottografici}); let also $\leq$ denote the product ordering of such functions, so that $f\leq g$ if and only if $I(f)\subseteq I(g)$.
\begin{definition}
\label{def:coh_A_lambda}
A family
$$
\left<\varphi_f:I(f)\to\mathbb{Z}\mid f\in{^\lambda}\omega\right>
$$
is \emph{coherent} if
$$\varphi_f=^* \varphi_g\restriction I(f)$$
for all $f\leq g$ in ${^\lambda}\omega$, and is \emph{trivial} if there exists a $\varphi:\lambda\times\omega\to\mathbb{Z}$ such that
$$\varphi_f=^* \varphi\restriction I(f)$$
for all $f$ in ${^\lambda}\omega$.
\end{definition}
Henceforth, we will sometimes omit restriction-notations; sums or comparisons of functions should in all cases be interpreted on the intersections of those functions' domains.
These notions more concretely relate to the systems $\mathbf{A}_\lambda$ as follows.
\begin{proposition}[\cite{Be17}] 
\label{prop:lim1Alambda}
$\mathrm{lim}^1\,\mathbf{A}_\lambda = 0$ if and only if  every coherent family $\langle\varphi_f:I(f)\to\mathbb{Z}\mid f\in{^\lambda}\omega\rangle
$ is trivial.
More precisely, observe that both the collection of coherent such families and its subcollection of trivial such families inherit a group structure from $\mathbb{Z}$; $\mathrm{lim}^1\,\mathbf{A}_\lambda$ is exactly the quotient of the former by the latter.
\end{proposition}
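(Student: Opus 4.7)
The plan is to obtain the statement as the degree-one instance of the long exact sequence associated to a natural short exact sequence of inverse systems. First I introduce the companion system $\mathbf{P}_\lambda$ with terms $\prod_{i<\lambda}\mathbb{Z}^{f(i)}=\mathbb{Z}^{I(f)}$ — that is, all functions $I(f)\to\mathbb{Z}$, with no finite-support condition — and the obvious restriction bonding maps. Writing $\mathbf{Q}_\lambda$ for the termwise quotient, the inclusion $\mathbf{A}_\lambda\hookrightarrow\mathbf{P}_\lambda$ yields a short exact sequence $0\to\mathbf{A}_\lambda\to\mathbf{P}_\lambda\to\mathbf{Q}_\lambda\to 0$ of pro-systems over $({^\lambda}\omega,\leq)$.

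Next I identify the degree-zero limits. Any compatible family in $\lim\mathbf{P}_\lambda$ glues to a single $\varphi:\lambda\times\omega\to\mathbb{Z}$, so $\lim\mathbf{P}_\lambda\cong\mathbb{Z}^{\lambda\times\omega}$. A compatible family of classes in $\lim\mathbf{Q}_\lambda$, once lifted to functions $\varphi_f:I(f)\to\mathbb{Z}$, is precisely a coherent family in the sense of Definition \ref{def:coh_A_lambda}, well-defined modulo families $(\varphi_f)$ with each $\varphi_f$ finitely supported; but such families are themselves trivial with witness $\varphi\equiv 0$, so $\lim\mathbf{Q}_\lambda$ is the group of coherent families modulo a subgroup of trivial ones. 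The image of $\lim\mathbf{P}_\lambda$ in $\lim\mathbf{Q}_\lambda$ is then visibly the full subgroup of trivial families, namely those of the form $(\varphi\restriction I(f))_f$. The proposition — its group-theoretic second sentence included, via naturality of the long exact sequence — will therefore follow once one shows $\lim^1\mathbf{P}_\lambda=0$.

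The vanishing of $\lim^1\mathbf{P}_\lambda$ is the one step I expect to require real care. My approach is to decompose $\mathbf{P}_\lambda$ as the product, taken coordinatewise over $(i,j)\in\lambda\times\omega$, of subsystems $\mathbf{Y}^{(i,j)}$, each assigning $\mathbb{Z}$ to $f$ with $f(i)\geq j$ and $0$ otherwise, with identity transitions between the former. The set $\{f\in{^\lambda}\omega:f(i)\geq j\}$ is upward-closed and cofinal in $({^\lambda}\omega,\leq)$, so $\mathbf{Y}^{(i,j)}$ restricts there to a constant system, whose higher derived limits vanish; commuting $\lim^1$ past the product then gives $\lim^1\mathbf{P}_\lambda=\prod_{(i,j)}\lim^1\mathbf{Y}^{(i,j)}=0$, completing the argument.
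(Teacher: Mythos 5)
Your proposal is correct, and it is essentially the classical argument for this fact rather than anything the present paper supplies: the paper simply quotes the proposition from \cite{Be17} and, in the surrounding remarks and Proposition \ref{prop:limnAlambda}, recasts coherence and triviality as cocycle and coboundary conditions in a cochain complex computing $\mathrm{lim}^n\,\mathbf{A}_\lambda$ uniformly in all degrees (with fuller discussions deferred to \cite{BeLH21} and \cite{CLH24}). Your route --- the termwise short exact sequence $0\to\mathbf{A}_\lambda\to\mathbf{P}_\lambda\to\mathbf{Q}_\lambda\to 0$, the identifications $\mathrm{lim}\,\mathbf{P}_\lambda\cong\mathbb{Z}^{\lambda\times\omega}$ and $\mathrm{lim}\,\mathbf{Q}_\lambda\cong(\text{coherent families})/(\text{pointwise finitely supported families})$, and the long exact sequence giving $\mathrm{lim}^1\mathbf{A}_\lambda\cong\mathrm{lim}\,\mathbf{Q}_\lambda/\mathrm{im}(\mathrm{lim}\,\mathbf{P}_\lambda)$ --- is the degree-one argument going back to \cite{MP88} and used in \cite{Be17}, and all of your identifications are right, including that the image of $\mathrm{lim}\,\mathbf{P}_\lambda$ is precisely the classes of trivial families. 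What the cochain-complex framing buys is uniformity over all $n$ (which is what the paper actually needs later), whereas your argument is self-contained but specific to degree one. Two points deserve explicit support rather than a wave: first, that $\mathrm{lim}^n$ commutes with products of systems over a fixed index poset (immediate from the canonical cochain complex, since products are exact in $\mathsf{Ab}$); second, the treatment of $\mathbf{Y}^{(i,j)}$, which uses both the invariance of $\mathrm{lim}^n$ under restriction to a cofinal \emph{directed} subset (note that $U=\{f\mid f(i)\geq j\}$ is indeed closed under pointwise maxima, so directed) and the vanishing of the higher derived limits of a constant system over a directed poset (contractibility of the nerve); both are standard but are genuine inputs and should be cited or proved. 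A last, purely terminological, caveat: the long exact sequence should be taken in the abelian category of $({^\lambda}\omega)$-indexed inverse systems, where $\mathrm{lim}^n$ is a derived functor, not literally in the pro-category; your computation is unaffected.
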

Put differently, the instances of \emph{coherence} and \emph{triviality} appearing in Definition \ref{def:coh_A_lambda} are, respectively, reframings of cocycle and coboundary conditions on the first term of a cochain complex whose $n^{\mathrm{th}}$ cohomology group is the $n^{\mathrm{th}}$ derived limit of $\mathbf{A}_\lambda$, and our analyses of higher $\mathrm{lim}^n\,\mathbf{A}_\lambda$ will center on parallel reframings of higher-dimensional cocycle and coboundary conditions.
Let us turn now to more formal definitions, beginning with that of the system $\mathbf{A}_\lambda$ itself.
\begin{definition}
\label{def:Alambda}
For any cardinal $\lambda$, $\mathbf{A}_\lambda$ denotes the inverse system of abelian groups indexed by $({^\lambda}\omega,\leq)$, with
\begin{itemize}
\item terms $A_f:=\bigoplus_{I(f)}\mathbb{Z}$, and
\item transition maps $\pi_{f,g}:A_g\to A_f$ the natural projections
\end{itemize}
for every $f\leq g$ in ${^\lambda}\omega$.
\end{definition}
We adopt the convention that $0\notin\mathbb{N}$.
For any quasi-order $(X,\trianglelefteq)$ and $n\in\mathbb{N}$, we write $X^{(n)}$ for the collection of $n$-tuples $\vec{x}=(x_0,\dots,x_{n-1})$ in $X$ for which $x_i\trianglelefteq x_j$ for all $i\leq j<n$; for any $j<n$ we then write $\vec{x}^j$ for the $(n-1)$-tuple obtained from $\vec{x}$ by removing $x_i$.
\begin{definition}
\label{def:ncohntriv}
Fix $n\in\mathbb{N}$. If $n=1$ then the family
$$
\Phi=\left<\varphi_{\vec{f}}:I(f_0)\to\mathbb{Z}\mid \vec{f}\in ({^\lambda}\omega)^{(n)}\right>
$$
is \emph{$n$-coherent} or \emph{$n$-trivial}, respectively, if it is coherent or trivial in the sense of Definition \ref{def:coh_A_lambda}.

When $n>1$, the family $\Phi$ is \emph{$n$-coherent} if
$$\sum_{i=0}^{n}(-1)^i\varphi_{\vec{f}^i} =^* 0$$
for all $\vec{f}\in ({^\lambda}\omega)^{(n+1)}$, and is \emph{$n$-trivial} if there exists a family
$$\Psi=\left<\psi_{\vec{f}}:I(f_0)\to\mathbb{Z}\mid \vec{f}\in ({^\lambda}\omega)^{(n-1)}\right>
$$
such that
$$\sum_{i=0}^{n-1}(-1)^i\psi_{\vec{f}^i} =^* \varphi_{\vec{f}}$$
for all $\vec{f}\in ({^\lambda}\omega)^{(n)}$. We write $\partial (\Psi)$ for the $n$-family whose functions are obtained from those of $\Psi$ through the alternating sum on the left-hand side. If these equal modulo finite $\phi_{\vec{f}}$ for every multi-index $\vec{f}$ we write $\partial(\Psi) = \Phi$ and say that $\Psi$ \emph{trivializes} $\Phi$. Sometimes we will apply the multi-index as a subscript to the uppercase Greek letter that stands for a certain family to denote the corresponding function. 
\end{definition}

\begin{proposition}
\label{prop:limnAlambda}
For any cardinal $\lambda$ and $n\in\mathbb{N}$, $\mathrm{lim}^n\,\mathbf{A}_\lambda$ is exactly the quotient of the group of ${^\lambda}\omega$-indexed $n$-coherent families by the group of ${^\lambda}\omega$-indexed $n$-trivial families.
In particular, $\mathrm{lim}^n\,\mathbf{A}_\lambda = 0$ if and only if every such $n$-coherent family is $n$-trivial.
\end{proposition}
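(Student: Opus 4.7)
The plan is to identify $\mathrm{lim}^n\,\mathbf{A}_\lambda$ with the cohomology of a Roos-type cochain complex, and then to compare that complex to the combinatorial one built from coherent and trivial families via a short exact sequence. Recall that for any inverse system $\mathbf{X}$ over a directed poset, $\mathrm{lim}^n \mathbf{X}$ is the $n^{\mathrm{th}}$ cohomology of the complex $C^\bullet(\mathbf{X})$ with $C^n(\mathbf{X})=\prod_{\vec{f}\in ({^\lambda}\omega)^{(n+1)}} X_{f_0}$, whose differential is the alternating sum of face maps (with the ``$0^{\mathrm{th}}$ face'' $\pi_{f_0,f_1}$ implementing restriction). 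Applied to $\mathbf{A}_\lambda$, an element of $C^n$ is a family $(\varphi_{\vec{f}})$ of finitely supported functions $\varphi_{\vec{f}}\colon I(f_0)\to\mathbb{Z}$, and its differential vanishes on the nose precisely when $\sum_{i=0}^{n+1}(-1)^i\varphi_{\vec{f}^i}=0$ for every $\vec{f}\in ({^\lambda}\omega)^{(n+2)}$. Coherence in the sense of Definition \ref{def:ncohntriv} asks only for this vanishing modulo finite.

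To bridge the two, I would introduce the relaxed complex $\tilde{C}^\bullet$ with $\tilde{C}^n=\prod_{\vec{f}}\mathbb{Z}^{I(f_0)}$, dropping the finite-support requirement; this is the Roos complex of the inverse system $\tilde{\mathbf{A}}_\lambda$ whose terms are $\tilde{A}_f=\mathbb{Z}^{I(f)}$. Setting $\bar{C}^\bullet := \tilde{C}^\bullet/C^\bullet$, the piece $\bar{C}^{n-1}$ is by inspection the group of ``$n$-families mod finite'', with cocycles precisely the $n$-coherent families and coboundaries precisely the $n$-trivial families. The proposition thus reduces to the natural isomorphism $H^n(C^\bullet)\cong H^{n-1}(\bar{C}^\bullet)$ for $n\geq 1$, delivered by the connecting homomorphism of the long exact sequence attached to $0\to C^\bullet \to \tilde{C}^\bullet \to \bar{C}^\bullet \to 0$, provided that $H^k(\tilde{C}^\bullet)=\mathrm{lim}^k\,\tilde{\mathbf{A}}_\lambda=0$ for every $k\geq 1$.

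To establish that vanishing I would decompose $\tilde{\mathbf{A}}_\lambda \cong \prod_{(i,j)\in\lambda\times\omega} M_{(i,j)}$, where $M_{(i,j)}$ assigns $\mathbb{Z}$ to each $f$ with $(i,j)\in I(f)$ and $0$ otherwise. The set $U_{(i,j)}=\{f : j\leq f(i)\}$ is a principal upward cone, hence a cofinal directed subposet of $^\lambda \omega$, and on this cofinal subsystem $M_{(i,j)}$ is the constant $\mathbb{Z}$. Cofinal invariance of higher derived limits, combined with their vanishing on constant systems over directed posets, yields $\mathrm{lim}^k M_{(i,j)}=0$ for $k\geq 1$; since $\mathrm{lim}^k$ commutes with products of abelian groups, the vanishing passes to $\tilde{\mathbf{A}}_\lambda$. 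The $n=1$ case of the proposition then follows from the tail of the same sequence: $H^0(\tilde{C}^\bullet)=\lim\tilde{\mathbf{A}}_\lambda \cong \mathbb{Z}^{\lambda\times\omega}$ is exactly the group of ``global'' functions used to define triviality, and quotienting the kernel $H^0(\bar{C}^\bullet)$ of coherent $1$-families by its image recovers Proposition \ref{prop:lim1Alambda}.

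I expect the acyclicity of $\tilde{\mathbf{A}}_\lambda$ to be the main obstacle. Cofinal invariance and commutation of $\mathrm{lim}^k$ with arbitrary products are classical, but each must be invoked carefully, particularly in the uncountable regime relevant to large $\lambda$. Once that vanishing is in hand, the rest is a bookkeeping translation between definitions.
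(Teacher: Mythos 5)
Your proposal is correct and is essentially the argument this paper relies on: the paper gives no proof of the proposition itself, deferring to \cite{BeLH21}, \cite{BHLH23}, and \cite{CLH24}, where the identification of $\mathrm{lim}^n\,\mathbf{A}_\lambda$ with the quotient of $n$-coherent by $n$-trivial families is obtained exactly as you describe, via the short exact sequence comparing the Roos complex of $\mathbf{A}_\lambda$ with that of the system with terms $\mathbb{Z}^{I(f)}$ and with the resulting ``mod finite'' quotient complex, whose cocycles and coboundaries are the $n$-coherent and $n$-trivial families. Your supporting steps for acyclicity of the full-product system (the decomposition into the systems $M_{(i,j)}$, cofinal invariance, vanishing of higher limits of constant systems over directed posets, and commutation of $\mathrm{lim}^k$ with products by exactness of products in $\mathsf{Ab}$) are the standard ones and are sound.
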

See Section 2 of \cite{BeLH21}, \cite{BHLH23}, or of \cite{CLH24} for fuller discussions of these characterizations.
We conclude this section by observing that analyses of the above relations tend to involve analyses of their restrictions to subsystems; a few last points will therefore be important for us below:
\begin{itemize}
\item The relations together described by Definitions \ref{def:Alambda} and \ref{def:ncohntriv} and Proposition \ref{prop:limnAlambda} all relativize in the obvious manner to any directed suborder of $({^\lambda}\omega,\leq)$.
\item More particularly, to define an $n$-coherent family, it will frequently suffice to define it over only a cofinal subset of indices: the essential point here is that such definitions then admit natural $n$-coherent extensions to the full index-set, and that the latter are $n$-trivial if and only if the cofinal subfamily is.
For further discussion, see \cite[Props.\ 2.4 and 2.5]{CLH24}.
\item It is frequently convenient to consider, in place of those of Definition \ref{def:ncohntriv}, families indexed by $({^\lambda}\omega,\leq^*)^{(n)}$, where the ${^*}$ again signals the \emph{with only finitely many exceptions} modification of a coordinatewise relation.
The $n$-coherence and nontriviality of such families are defined exactly as before, and bear, broadly, the same relation to our other objects of interest (and in particular to the vanishing of $\mathrm{lim}^n\,\mathbf{A}$) as those of Definition \ref{def:ncohntriv}; see \cite[\S 3]{Cas24}, for example, for further discussion.
\item If $\mathcal{F}$ is a family of functions and $X$ is a set, then $\mathcal{F}\restriction\restriction X$ denotes the family determined by restricting each function in $\mathcal{F}$ to the intersection of its domain with $X$.
\end{itemize}

\section{Theorem A}
\label{sect:thmA}

In this section, we will show that the following partial order satisfies the conclusions of our introduction's Theorem A.

\begin{definition}
Let $\mathbb{P}$ denote the forcing poset with
\begin{itemize}
\item \textbf{conditions} $2$-coherent families $$p=\left< \varphi^p_{fg}:I(f)\to\mathbb{Z}\mid f\leq g\text{ in } E_p\right>$$
with $E_p$ a $\leq$-directed subset of ${^{\omega_1}}\omega$ of cardinality at most $\aleph_1$,
\item \textbf{ordered by} $p\leq q$ if and only if $p\supseteq q$.
\end{itemize}
\end{definition}
\begin{proof}[Proof of Theorem A]
By assumption, the continuum hypothesis ($\mathsf{CH}$) holds in our ground model $V$; hence by Goblot's vanishing theorem, $\mathrm{lim}^2\,\mathbf{A}_{\aleph_0}=0$ does as well.
The poset $\mathbb{P}$ is $\aleph_2$-closed in the strong sense that, for any $\alpha<\omega_2$, the union of any descending chain $\langle p_i\mid i<\alpha\rangle$ of conditions forms a lower bound for all of them.
$\mathbb{P}$ therefore adds no functions from $\omega_1$ to $\omega$ and, in particular, preserves both $\mathsf{CH}$ and its consequence $\mathrm{lim}^2\,\mathbf{A}_{\aleph_0}=0$.
It is also straightforward to see, again via Goblot's vanishing theorem, that for any $f:\omega_1\to\omega$ the set $\{p\in\mathbb{P}\mid f\in E_p\}$ is dense in $\mathbb{P}$.
Thus the union of a $\mathbb{P}$-generic filter $G$ is, in $V[G]$, a $2$-coherent family of the form $\Phi=\langle \varphi_{fg}:I(f)\to\mathbb{Z}\mid f\leq g\text{ in }{^{\omega_1}}\omega\rangle$, and only the following remains to be seen:
\begin{claim}
\label{clm:1}
$\Phi=\cup G$ is nontrivial in $V[G]$; in particular, $V[G]\vDash\mathrm{lim}^2\,\mathbf{A}_{\aleph_0}\neq 0$.
\end{claim}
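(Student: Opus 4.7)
My plan is to argue by contradiction: suppose some $p^* \in \mathbb{P}$ and a $\mathbb{P}$-name $\dot\Psi = \langle \dot\psi_f \mid f \in {^{\omega_1}}\omega \rangle$ are such that $p^* \Vdash$ ``\,$\dot\Psi$ trivializes $\dot\Phi$\,''. A first key observation is that, by the strong $\aleph_2$-closure of $\mathbb{P}$ recorded in the proof above, no new $\aleph_1$-sized functions are added by the forcing; in particular each $\dot\psi_f$ is forced into $V$, and $({^{\omega_1}}\omega)^{V[G]} = ({^{\omega_1}}\omega)^V$. This is what makes the problem tractable: every coordinate of any prospective trivialization already lives in the ground model.

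Next I would perform, in $V$, a fusion-style construction below $p^*$. Fix in $V$ a target function $g \in {^{\omega_1}}\omega$ together with a $\leq$-increasing chain $\langle f_\alpha \mid \alpha < \omega_1 \rangle$ in ${^{\omega_1}}\omega$ converging pointwise to $g$, so that $\bigcup_\alpha I(f_\alpha) = I(g)$. Build a descending sequence $\langle p_\alpha \mid \alpha < \omega_1 \rangle$ below $p^*$ and decided values $\psi^*_\alpha \in V$ such that $p_\alpha$ has $f_\beta \in E_{p_\alpha}$ for $\beta \leq \alpha$ and forces $\dot\psi_{f_\alpha} = \check\psi^*_\alpha$. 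Successor stages use the $V$-valuedness of $\dot\psi_{f_\alpha}$; limit stages take meets via $\aleph_2$-closure. Let $p_\infty \in \mathbb{P}$ be the resulting lower bound.

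The combinatorial heart of the argument is then the extension of $p_\infty$ to a condition $q \in \mathbb{P}$ adding $g$ to $E_q$ with transition maps $\varphi^q_{f_\alpha, g} \colon I(f_\alpha) \to \mathbb{Z}$ chosen so that $q$ forces no $\psi_g \in V$ to satisfy $\psi_g - \psi^*_\alpha =^* \varphi^q_{f_\alpha, g}$ on $I(f_\alpha)$ simultaneously for all $\alpha < \omega_1$. Unwinding the 2-coherence and triviality constraints along $\langle f_\alpha \rangle$, one sees that any such $\psi_g$ must be of the form $\tilde\psi_g + \epsilon$, where $\tilde\psi_g(x) := \psi^*_{\alpha(x)}(x) + \varphi^q_{f_{\alpha(x)}, g}(x)$ is the canonical glued candidate (with $\alpha(x) := \min\{\alpha : x \in I(f_\alpha)\}$) and $\epsilon \in V$ must trivialize an auxiliary $1$-coherent family $\langle \eta_\alpha \colon I(f_\alpha) \to \mathbb{Z} \mid \alpha < \omega_1 \rangle$ extracted from the $\psi^*$ and $\varphi^q$ data. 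The plan is to exploit the genuine freedom that 2-coherence affords in placing the finitely-many exceptional values of each $\varphi^q_{f_\alpha, g}$ --- depositing those exceptions on a carefully scheduled subset of $I(g)$ --- so as to force $\langle \eta_\alpha \rangle$ to be a nontrivial $1$-coherent family of the finite-to-one type recorded in Example~\ref{ex:fintoone}, strengthened to cohere with our $2$-dimensional setting. Its resulting nontriviality then precludes any $\epsilon \in V$, hence any $\psi_g \in V$, contradicting $q \leq p^*$.

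The main obstacle I anticipate is this last combinatorial step: engineering the $\varphi^q_{f_\alpha, g}$ across $\alpha < \omega_1$ so that the finite-exception sets mandated by $2$-coherence accumulate into a genuinely finite-to-one pattern on $I(g)$, all while remaining compatible with $2$-coherence against the entirety of $E_{p_\infty}$. This is precisely the ``strengthened finite-to-one-ness'' device promised in the introduction, and it is where the bulk of the technical weight of Theorem~A should fall.
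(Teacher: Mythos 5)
Your overall architecture (contradiction, use of strong $\aleph_2$-closure to get ground-model decided values $\psi^*$, then extension of the decided condition by a new index $g$ so that the induced $1$-coherent family $\langle \varphi_{f_\alpha,g}+\psi^*_\alpha\rangle$ is nontrivial, hence no $\psi_g$ can exist) matches the paper's strategy. But there is a genuine gap, and it is created by your order of quantifiers: you fix $g$ (and the chain $\langle f_\alpha\rangle$ converging to it) \emph{before} running the fusion. The nontriviality you need is not freely engineerable: $2$-coherence of the condition $r$ ties the new values $\varphi^r_{f_\alpha,g}$ to data that is already frozen. Concretely, if $E_{p_\infty}$ contains some $h\geq g$ (which the adversary can arrange simply by putting such an $h$ into $E_{p^*}$, since $g$ is fixed in advance), and the fusion has also decided $\dot\psi_h=\psi^*_h$ (nothing in your construction prevents this), then $2$-coherence forces $\varphi^r_{f_\alpha,g}=^*\varphi^{p_\infty}_{f_\alpha,h}-\varphi^r_{g,h}\restriction I(f_\alpha)$, while the forced triviality below $p_\infty$ gives $\varphi^{p_\infty}_{f_\alpha,h}+\psi^*_\alpha=^*\psi^*_h\restriction I(f_\alpha)$; hence $\langle\varphi^r_{f_\alpha,g}+\psi^*_\alpha\rangle$ is trivialized by $\psi^*_h-\varphi^r_{g,h}$ no matter how you place your ``exceptional values.'' So the key combinatorial step you defer is not merely technically heavy; with $g$ chosen first it can be outright impossible.

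The paper resolves exactly this by reversing the quantifiers and isolating the combinatorics in Lemma~\ref{lem:1}: first extend $p$ to a $q$ whose index set $E_q$ is $\wedge$-closed, contains \emph{all} bounded-support functions $B$ (so that the truncations $g^\alpha$ of a yet-to-be-chosen $g$ already have decided $\psi$-values), and has $\dot\psi_f$ decided for all $f\in E_q$; only then is $g$ chosen, depending on $E_q$, together with a coherent family $\Theta$ along the $g^\alpha$ which is nontrivial yet whose restriction $\Theta\restriction\restriction I(f)$ is trivial for every $f\in E_q$ (a strengthening of the finite-to-one condition of Example~\ref{ex:fintoone}). This restricted triviality is not a convenience but precisely what makes the extension to a legitimate $2$-coherent condition $r$ on the $\vee$-closure of $E_q\cup\{g\}$ possible: the coherence equations linking $f$, $g$, and $f\vee g$ only see $\Theta$ on $I(f\wedge g)$, where the local trivializations furnished by Lemma~\ref{lem:1} cancel it. To repair your proposal you would need both to postpone the choice of $g$ until after the decided family is fixed (seeding the decided set with $B$, as above) and to prove the analogue of Lemma~\ref{lem:1}; as written, your plan supplies neither.
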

We will argue our claim by way of the lemma just below, which we prove in this section's conclusion.
First, fix the following notations:
for any $f:\omega_1\to\omega$ and $\alpha<\omega_1$,
define $f^\alpha:\omega_1\to\omega$ by
$$f^\alpha(\xi)=\begin{cases} f(\xi) & \xi < \alpha \\ 0 & \xi\geq\alpha \end{cases}$$
and let $B=\{h\in{^{\omega_1}}\omega\mid h = f^\alpha\text{ for some }f\in{^{\omega_1}}\omega\text{ and }\alpha<\omega_1\}$.
Put differently, $B$ is the set of functions $h:\omega_1\to\omega$ of bounded support; under the continuum hypothesis, it is of size $\aleph_1$.
\begin{lemma}
\label{lem:1}
For any $E\subseteq{^{\omega_1}}\omega$ of cardinality at most $\aleph_1$ there exists a $g:\omega_1\to\omega$ and nontrivial coherent family $\Theta = \langle \theta_\alpha:I(g^\alpha)\to\mathbb{Z}\mid\alpha<\omega_1\rangle$ such that $\Theta\restriction\restriction I(f)$ is trivial for every $f\in E$.
\end{lemma}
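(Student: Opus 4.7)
The plan is to derive $\Theta$ from an auxiliary nontrivial coherent family $\Phi=\langle\varphi_\alpha:\alpha\to\mathbb{Z}\mid\alpha<\omega_1\rangle$ tailored to $E$, together with a function $g:\omega_1\to\omega$, via the direct formula
$$
\theta_\alpha(i,j) := \varphi_\alpha(i)\cdot\chi_{\{j=g(i)\}}.
$$
Coherence of $\Theta$ then follows immediately from coherence of $\Phi$: $\theta_\beta$ and $\theta_\alpha$ agree on $I(g^\alpha)$ except at those pairs $(i,g(i))$ for which $\varphi_\alpha(i)\neq\varphi_\beta(i)$, a finite set. Similarly, a hypothetical global trivialization $\psi$ of $\Theta$ would yield $\varphi(i):=\psi(i,g(i))$ trivializing $\Phi$, so nontriviality of $\Theta$ reduces to that of $\Phi$.

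Local triviality of $\Theta\restriction\restriction I(f)$ for $f\in E$ reduces in turn to the triviality of $\Phi$ restricted to $S_f:=\{i<\omega_1\mid g(i)\leq f(i)\}$: given $\widetilde\varphi_f:S_f\to\mathbb{Z}$ witnessing the latter, the function $\psi_f(i,g(i)):=\widetilde\varphi_f(i)$ on $S_f$ (and $\psi_f\equiv 0$ elsewhere) trivializes $\Theta\restriction\restriction I(f)$. The task therefore reduces to constructing $\Phi$ and $g$ such that each $S_f$ is \emph{bounded} in $\omega_1$, since any coherent family restricted to a bounded, hence countable, subset of $\omega_1$ is automatically trivial by the usual coherent-extension argument.

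The crux is a combinatorial construction, using $\mathsf{CH}$ to enumerate $E$ as $\{f_\xi:\xi<\omega_1\}$, that produces $\Phi$ with a \emph{strengthened finite-to-one property}: an assignment $\xi\mapsto\eta_\xi<\omega_1$ such that for every $\alpha>\eta_\xi$, the set $\{i\in[\eta_\xi,\alpha)\mid\varphi_\alpha(i)\leq f_\xi(i)\}$ is finite. Setting then $g(i):=\varphi_{h(i)}(i)$ for a suitable monotone $h:\omega_1\to\omega_1$ with $h(i)>i$ gives $S_\xi\subseteq\eta_\xi\cup(\text{finite set})$, which is bounded in $\omega_1$ as required; and nontriviality of $\Phi$ (which, since $\Phi$ is in the ground model, can be taken from the classical ZFC construction via walks on ordinals) propagates to $\Theta$ as explained above.

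The strengthened $\Phi$ is built by transfinite recursion on $\alpha<\omega_1$. Successor stages are essentially free: extending $\varphi_\alpha$ at the single new ordinal affects none of the already-committed domination thresholds $\eta_\xi$ below it. The main obstacle is the limit stage $\alpha=\sup_n\alpha_n$: coherence pins $\varphi_\alpha$ down mod finite on each interval $[\alpha_n,\alpha_{n+1})$, permitting only finitely many modifications per interval, while each of the countably many $\xi<\alpha$ contributes a potential countable accumulation of failures across these intervals. The resolution is a bookkeeping scheme in which the thresholds $\eta_\xi$ are committed in step with the recursion and coordinated with a cofinal sequence $\alpha_n\uparrow\alpha$, so that the finitely many modifications on each $[\alpha_n,\alpha_{n+1})$ can simultaneously absorb the (finitely many, by the inductive hypothesis) failures for every $\xi$ currently active at that level; $\mathsf{CH}$ enters essentially in enumerating $E$ and in coordinating these choices.
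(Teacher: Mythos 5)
Your reduction of the local-triviality requirement to the boundedness of $S_f=\{i<\omega_1\mid g(i)\leq f(i)\}$ is where the argument breaks, and it cannot be repaired: for general $E$ that target is unattainable. The lemma allows an arbitrary $E$ of size at most $\aleph_1$; in particular $E$ may contain every constant function $\bar{n}$ (the paper's own proof deliberately \emph{enlarges} $E$ to include the constants, since that is what secures nontriviality). If each $S_{\bar{n}}=\{i\mid g(i)\leq n\}$ were bounded, say by $\beta_n<\omega_1$, then any $i>\sup_n\beta_n$ would satisfy $g(i)>n$ for every $n$, which is absurd; the same obstruction arises from any countable subfamily of $E$ whose pointwise supremum is infinite at unboundedly many coordinates. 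So no choice of $\Phi$, $h$, $g$ can make every $S_f$ bounded, and triviality of $\Theta\restriction\restriction I(f)$ must instead be achieved over an \emph{unbounded} $S_f$, by making the restricted family almost zero there rather than by bounding its domain. There is also an internal tension in your plan: the cheap route to nontriviality of $\Phi$ (finite-to-one-ness, i.e.\ domination of the constants) is precisely the configuration in which your boundedness strategy collapses, and the parenthetical appeal to the classical walks family is not available anyway, since $\Phi$ must simultaneously carry the $E$-tailored property, so its nontriviality has to be arranged inside the same recursion.

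A second, independent gap is the step from your ``strengthened finite-to-one property'' to $S_\xi\subseteq\eta_\xi\cup(\text{finite})$. That property bounds, for each \emph{fixed} $\alpha$, the set of $i\in[\eta_\xi,\alpha)$ with $\varphi_\alpha(i)\leq f_\xi(i)$; but $g(i)=\varphi_{h(i)}(i)$ samples a different index $\alpha=h(i)$ at each coordinate, and coherence does not stabilize the value $\varphi_\alpha(i)$ at a fixed $i$ as $\alpha$ grows (finite modifications at the single coordinate $i$ may occur cofinally often). The finite bad sets attached to the various $h(i)$ can therefore accumulate along the diagonal into an unbounded set, and no monotonicity hypothesis on $h$ prevents this; by the first paragraph, in general nothing can. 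Contrast this with the paper's proof, which defines $g$ \emph{first}, block by block on the intervals $[\omega\cdot\alpha,\omega\cdot\alpha+\omega)$, arranging only that on each sufficiently late block $g$ exceeds $f_\xi$ at \emph{some} coordinate, and then builds $\Theta$ with a single support point per block, shifting that point out of $I(f_\xi)$ along the graph of $g$ at limit stages while preserving the finite-to-one condition that yields nontriviality. It is the sparseness of the supports, not global domination of each $f\in E$ on a tail of $\omega_1$, that reconciles nontriviality with triviality on every $I(f)$.
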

Let us see how the lemma implies Claim \ref{clm:1}, thereby completing the proof of Theorem A.
Suppose for contradiction that the claim is false, and hence that $$p\Vdash\mathrm{``}\dot{\Psi}=\langle \dot{\psi}_f:I(f)\to\mathbb{Z}\mid f\in {^{\omega_1}}\omega\rangle\text{ trivializes }\dot{\Phi}\textnormal{''}$$
for some $p\in G$.
As $\mathbb{P}$ is (strongly) $\aleph_2$-closed, we may extend $p$ to a $q$ with $B\subseteq E_q$ and $E_q$ $\wedge$-closed which decides the functions $\dot{\psi}_f$ for all $f\in E_q$.
The contradiction then takes the form of an $r\leq q$ forcing that $\dot{\Psi}$ does not trivialize $\dot{\Phi}$; more precisely, $r$ will force that (dropping the dot notation and working in $V[G]$) for some $g\in E_r$, the family
$$\left< \varphi_{g^\alpha g}+\psi_{g^\alpha}\mid\alpha<\omega_1\right>$$
is nontrivially coherent.
This, though, implies that no value of $\psi_g$ can satisfy $\psi_g\restriction I(g^\alpha) =^* \varphi_{g^\alpha g}+\psi_{g^\alpha}$ (or, equivalently, $\psi_g\restriction I(g^\alpha) - \psi_{g^\alpha} =^* \varphi_{g^\alpha g}$) for all $\alpha<\omega_1$, and thus that $\Psi$ does not trivialize $\Phi$.
It is Lemma \ref{lem:1} that ensures the existence of such an $r$: to apply it, take $E_q$ for the $E$ of the lemma's premise; then, taking $g$ and $\Theta$ as in its conclusion, let $\varphi^r_{g^\alpha g}=\theta_\alpha - \psi_{g^\alpha}$ for all $\alpha<\omega_1$.
It remains then only to extend these assignments, and those of $q$, to a $2$-coherent family $r$ indexed by a $\leq$-directed $E_r\supseteq E_q\cup\{g\}$ of cardinality $\aleph_1$. 
In fact $E_r$ will be the $\vee$-closure of $E_q\cup\{g\}$, and we will argue below that it essentially suffices, by our assumptions on $E_q$, to extend our assignments in a $2$-coherent manner to $\varphi^r_{f,f\vee g}$ and $\varphi^r_{g,f\vee g}$ for all $f\in E_q$.

To this end, well-order $E_q$ and consider at stage $f\in E_q$ the family of equations
\begin{equation}
\label{eq:1}
\varphi_{g^\alpha,(f\vee g)^\alpha} + \varphi_{(f\vee g)^\alpha, f\vee g} - \varphi_{g^\alpha,g} - \varphi_{g, f\vee g}=^* 0
\end{equation}
as $\alpha$ ranges below $\omega_1$.
For readability, we've suppressed notation of the restriction of these functions to the intersection of their domains; note that these are equations which any $2$-coherent family of functions over such indices must satisfy.
Note also that we've already assigned values and relations to these equations' first and third terms, so that the equations (\ref{eq:1}) unpack as
\begin{align*}
\varphi_{g, f\vee g} =^* & \; \varphi_{g^\alpha,(f\vee g)^\alpha} + \varphi_{(f\vee g)^\alpha, f\vee g} - \varphi_{g^\alpha,g} \\ =^* & \; \psi_{(f\vee g)^\alpha} - \psi_{g^\alpha} + \varphi_{(f\vee g)^\alpha, f\vee g} - (\theta_\alpha - \psi_{g^\alpha}) \\ =^* & \; \psi_{(f\vee g)^\alpha} - \theta_\alpha + \varphi_{(f\vee g)^\alpha, f\vee g}
\end{align*}
for every $\alpha<\omega_1$.
Assume that $f\vee g \neq f'\vee g$ for any $f'$ appearing earlier in the well-ordering of $E_q$ (if this is false then the above requirements have already been addressed).
The last line then consists for each $\alpha$ of two pregiven terms together with a term, $\varphi_{(f\vee g)^\alpha, f\vee g}$, which we are free to define.
We must do so so that as $\alpha$ ranges, the resulting expressions form, over the domain $I(g)$, a coherent family trivialized by $\varphi_{g, f\vee g}$.
More particularly, our choice must offset both the nontriviality of $\Theta$ and the potential noncoherence of $\langle \psi_{(f\vee g)^\alpha}\mid\alpha<\omega_1\rangle$; the natural choices are thus given by
\begin{equation}
\label{eq:2}
\varphi^r_{(f\vee g)^\alpha,f\vee g}(\xi,i)=\begin{cases} \theta_\alpha(\xi,i) - \psi_{(f\vee g)^\alpha}(\xi,i) & \text{ if }(\xi,i)\in I(g^\alpha) \\ - \psi_{(f\vee g)^\alpha}(\xi,i) & \text{ if }(\xi,i)\in I((f\vee g)^\alpha)\backslash I(g^\alpha) \end{cases}
\end{equation}
These together determine $\varphi_{g, f\vee g}$ up to finite error: the constantly zero function is now the natural choice for $\varphi^r_{g, f\vee g}$.

Next, consider for the same $f$ the family of equations
\begin{equation}
\label{eq:3}
\varphi_{f^\alpha,(f\vee g)^\alpha} + \varphi_{(f\vee g)^\alpha, f\vee g} - \varphi_{f^\alpha,f} - \varphi_{f, f\vee g}=^* 0
\end{equation}
as $\alpha$ ranges below $\omega_1$.
These unpack much as above, but to a system of equations in some tension with the procedure just described.
For we now have that
\begin{align*}
\varphi_{f, f\vee g} =^* & \; \varphi_{f^\alpha,(f\vee g)^\alpha} + \varphi_{(f\vee g)^\alpha, f\vee g} - \varphi_{f^\alpha,f} \\ =^* & \; \psi_{(f\vee g)^\alpha} - \psi_{f^{\alpha}} + \varphi_{(f\vee g)^\alpha, f\vee g} - (\psi_f - \psi_{f^\alpha}) \\ =^* & \; \psi_{(f\vee g)^\alpha} + \varphi_{(f\vee g)^\alpha, f\vee g} - \psi_f
\end{align*}
for every $\alpha<\omega_1$, and under the definitions in (\ref{eq:2}), the $\alpha$-dependent terms in the last line sum, over the domain $I(g)$, to the nontrivial coherent family $\Theta$.
Crucially, however, only the portion $I(f\wedge g)$ of that domain is in play in the equations (\ref{eq:3}): Lemma \ref{lem:1} ensures us a trivialization $\upsilon:I(f\wedge g)\to\mathbb{Z}$ of $\Theta\restriction\restriction I(f)$, and thus
\begin{equation*}
\varphi^r_{f,f\vee g}(\xi,i)=\begin{cases} \upsilon(\xi,i) - \psi_{f}(\xi,i) & \text{ if }(\xi,i)\in I(f\wedge g) \\ - \psi_{f}(\xi,i) & \text{ if }(\xi,i)\in I(f)\backslash I(f\wedge g) \end{cases}
\end{equation*}
combines with the other elements of $r$ so far defined to satisfy the equations of (\ref{eq:3}).

Proceeding in this fashion, one defines the functions $\varphi^r_{f, f\vee g}$ and $\varphi^r_{g, f\vee g}$ for all $f \in E_q$.
These determine, in turn, definitions (mod finite) of all yet-undefined elements of $r$: for $h,f\in E_q$, for example, $h\leq f\vee g$ implies that $f\vee g = (f\vee h)\vee g$, and hence that the associated $\varphi^r_{h,f\vee g}$ must (mod finite) equal the sum of the defined terms $\varphi^r_{h,f\vee h}$ and $\varphi^r_{f\vee h, (f\vee h)\vee g}$.
The somewhat tedious but elementary remaining details, and in particular the verification that these assignments indeed $2$-cohere, are left to the reader.
Modulo the proof of Lemma \ref{lem:1}, this completes the proof of Theorem A.
\end{proof}

Before proving Lemma \ref{lem:1}, let us briefly expand on how it's interesting.
For each $n\in\omega$, write $\bar{n}$ for the function $\omega_1\to\omega$ constantly taking the value $n$ and $E$ for the collection $\{\bar{n}\mid n\in\omega\}$.
Next, recalling the $\Phi$ of Example \ref{ex:fintoone}, define a family $\Psi=\langle\psi_\beta:\beta\times\omega\to\mathbb{Z}\mid\beta<\omega_1\rangle$ by
\[
\psi_\beta(\alpha,j)=
\begin{cases} 1 & \text{if }\varphi_{\beta}(\alpha)=j \\ 0 & \text{otherwise}
\end{cases}
\]
Each $\psi_\beta$ is, in other words, the characteristic function of the graph of $\varphi_\beta$, and the finite-to-oneness of $\Phi$ translates as the fact that $\Psi\restriction\restriction I(f)$ is trivial for every $f\in E$ (more precisely, it translates as the fact that each such $\Psi\restriction\restriction I(f)$ is, mod finite, $0$, in the natural sense).
The connection to Lemma \ref{lem:1} should be clear; for a better fit, for any $\alpha<\omega_1$ and $n<\omega$ let $g(\omega\cdot\alpha+n)=n$, and for any $\alpha<\beta<\omega_1$ and $j\leq n<\omega$ let
\[
\theta_{\omega\cdot\beta}(\omega\cdot\alpha+n,j)=
\begin{cases} 1 & \text{if }j=n=\varphi_{\beta}(\alpha) \\ 0 & \text{otherwise}
\end{cases}
\]
This $g$ and the obvious extension of our definition to a $$\Theta=\langle\theta_\gamma:I(g\restriction\gamma)\to\mathbb{Z}\mid\gamma<\omega_1\rangle$$ are then as the lemma envisions for the collection $E=\{\bar{n}\mid n\in\omega\}$.\footnote{Note a slight imprecision here: the lemma posits, in the language most natural to the proof of Theorem A, families of functions $\theta_\gamma$ of domain $I(g^\gamma)$. Domains of $I(g\restriction\gamma)$ are somewhat more natural to lemma's proof, though, and the former so readily derive from the latter (via extension by zeros) that we ignore the difference for the remainder of this section.}
From this perspective, the lemma may be viewed as generalizing the classical anti-triviality condition of finite-to-oneness to larger and more arbitrary collections of functions $E$.
\begin{proof}[Proof of Lemma \ref{lem:1}]
Fix $E=\langle f_\alpha\mid\alpha<\omega_1\rangle$ as in the statement of the lemma; by expanding $E$ if necessary, we may assume that $f_n=\bar{n}$ for each $n\in\omega$.
Fix also a bijection $b[\alpha]:\omega\to\alpha$ for each infinite $\alpha<\omega_1$, and define $g:\omega_1\to\omega$ by
\[
g(\omega\cdot\alpha+n)=
\begin{cases} n+1 & \text{if }\alpha<\omega \\ \mathrm{sup}\{f_{b[\alpha](k)}(n)\mid k\leq n\}+1 & \text{otherwise}
\end{cases}
\]
We define a coherent $\Theta=\langle\theta_\gamma:I(g\restriction\gamma)\to\mathbb{Z}\mid\gamma<\omega_1\rangle$
as desired by recursion on $\beta<\omega_1$ --- or more precisely on $\gamma=\omega\cdot\beta$ for such $\beta$ --- as follows.

At successor steps $\beta = \alpha+1$, define $\theta_\gamma:I(g\restriction\gamma)\to\mathbb{Z}$ for any $\gamma\in (\omega\cdot\alpha,\omega\cdot\beta]$ by
\[
\theta_\gamma(i,j)=
\begin{cases} \theta_{\omega\cdot\alpha}(i,j) & \text{if }i<\omega\cdot\alpha \\ 1 & \text{if }i=\omega\cdot\alpha\text{ and }j=g(i) \\ 0 & \text{otherwise}
\end{cases}
\]
Note that this step conserves the condition that, on any full subdomain of the form $I(g\restriction [\omega\cdot\eta,\omega\cdot\eta+\omega))$, the support of any $\theta_\gamma$ constructed so far is of size exactly $1$.
We will denote the unique element $(i,j)$ of this support by $s_\gamma(\eta)$.

At limit steps $\beta$, let $\xi_\ell = \max \{b[\beta](k)\mid k\leq\ell\}$ and let $C_\beta =\{\xi_\ell\mid\ell<\omega\}$.
Fix next an injection $h:\beta\to C_\beta$ so that $h(\alpha)>\alpha$ for all $\alpha\in\beta$.
Define $\hat{\theta}_{\omega\cdot\beta}:I(g\restriction\omega\cdot\beta)\to\mathbb{Z}$ by
$$\hat{\theta}_{\omega\cdot\beta}(\omega\cdot\alpha+n,j)=\theta_{\omega\cdot\mathrm{min}(C_\beta\backslash(\alpha+1))}(\omega\cdot\alpha+n,j)$$
for any $\alpha<\beta$ and $n<\omega$.
This definition conserves both coherence and the support condition noted above.
We will modify it, though, to conserve something deeper, namely the condition that
\begin{equation}
\label{eq:induction_condition}
\tag{RH($\beta$)}
\mathrm{supp}(\theta_\gamma\restriction I(f_\alpha\restriction [\omega\cdot\alpha,\gamma)))\text{ is finite for any }\alpha<\beta\text{ and }\gamma\in(\omega\cdot\alpha,\omega\cdot\beta].
\end{equation}
We do so via finite modifications (thereby conserving coherence) to each of the constituent ``blocks''
$$\hat{\theta}_{\omega\cdot\beta}\restriction I(g\restriction [\omega\cdot\xi_\ell,\omega\cdot\xi_{\ell+1}))=\theta_{\omega\cdot\xi_{\ell+1}}\restriction I(g\restriction [\omega\cdot\xi_\ell,\omega\cdot\xi_{\ell+1}))$$ of $\hat{\theta}_{\omega\cdot\beta}$.
The idea of these modifications is simply, for $\eta\in [\xi_\ell,\xi_{\ell+1})$ with $h(\alpha)\leq\xi_\ell$, to ``shift'' any $s_{\omega\cdot\xi_{\ell+1}}(\eta)$ falling within $I(f_\alpha)$ out of it, along the graph of $g$; by our inductive hypotheses $\mathrm{RH}(\xi_\ell)$, for each $\ell$ this is only necessary for a finite collection $F$ of such $\eta$.
More precisely, for each such $\eta\in F$, let $$n=\max\,b[\eta]^{-1}\{\alpha\mid h(\alpha)\leq\xi_\ell\text{ and }s_{\omega\cdot\xi_{\ell+1}}(\eta)\in I(f_\alpha)\}$$
and define $\theta_{\omega\cdot\beta}$ so that
\[
s_{\omega\cdot\beta}(\eta)=
\begin{cases} (\omega\cdot\eta+n,g(\omega\cdot\eta+n)) & \text{if }\eta\in F \\ s_{\omega\cdot\xi_{\ell+1}}(\eta) & \text{otherwise}
\end{cases}
\]
for any $\eta\in[\xi_\ell,\xi_{\ell+1})$.
This completes our description of the construction of $\Theta$.

To conclude, let us check that $\Theta$ is as desired.
Its coherence was noted above.
Its nontriviality follows from our choice of $\{f_n\mid n\in\omega\}$, which ensure that $\Theta$ exhibits the finite-to-one-ness condition of Example \ref{ex:fintoone} in the manner discussed just before this proof.
And the triviality of $\Theta\restriction\restriction I(f_\alpha)$ for any $f_\alpha\in E$ follows from the fact that for any $\gamma\in (\omega\cdot\alpha,\omega_1)$, the support of $\theta_\gamma\restriction I(f_\alpha\restriction (\omega\cdot\alpha,\gamma))$ is finite.
\end{proof}

\section{Spine and tree filtrations}
\label{sect:filtrations}

We turn now to the proof of Theorem B --- or, more precisely, to its essential preliminaries; as indicated, the theorem's argument involves organizations of the family $({^\lambda}\omega,\leq^*)$ which are, in some sense, unavoidably nonlinear, and the focus of the present section is their specification.

Let us begin by fixing a few further notations. Both the quasi-orders $\leq$ and $\leq^*$ will be in play below; we reiterate that a $^*$ affixed to $=$, $\leq$, or $<$ signals that the relation in question holds on all but finitely many coordinates.
The letters $\kappa$ and $\lambda$ will always denote cardinals; $\mathrm{cf}$ denotes the cofinality function.
$S^\lambda_\kappa$ denotes the stationary set of cofinality-$\kappa$ ordinals below $\lambda$;
we adopt the more readable variant $S_m^n$ when $\kappa = \aleph_m$ and $\lambda = \aleph_n$.
Much as in Section \ref{sect:thmA}, we will have occasion to consider the Euclidean division of an ordinal $\alpha$ by $\omega_n$.
We denote the quotient and the remainder by $q_n(\alpha) $ and $r_n(\alpha)$, so that $r_n(\alpha) < \omega_n$ and $\alpha = \omega_n \cdot q_n(\alpha) + r_n(\alpha)$.
The following definition's a bit more general than it needs to be: in all our applications of it below, $Q$ will itself already be a chain.







\begin{definition}
\label{def:spines}
Let $\trianglelefteq$ be a quasi-order on sets $Q\subseteq P$.
We say that \emph{$P$ possesses a $\kappa$-spine $R$ in $Q$} for some cardinal $\kappa$ if there exists a $\vartriangleleft$-increasing enumeration $\langle r_\alpha \mid \alpha<\kappa\rangle$ of a $\trianglelefteq$-cofinal subset $R$ of $Q$.
We say that $(P,Q)$ is a \emph{$\kappa$-unbounded pair} if
\begin{itemize}
\item $P$ is $\trianglelefteq$-directed,
\item $\mathrm{cf}(P)=\mathrm{cf} (Q)=\mathrm{cf}(R)=\kappa$,
\item $Q$ is $\trianglelefteq$-unbounded in $P$, and
\item $P$ possesses a $\kappa$-spine in $Q$.
\end{itemize}
A \emph{$\kappa$-spine filtration} for a $\kappa$-unbounded pair $(P,Q)$ is a continuous filtration of
\[  P = \bigcup_{\alpha < \kappa}   P_\alpha \]
by $\trianglelefteq$-order-ideals $P_\alpha$ together with a $\kappa$-spine $R$ in $Q$ such that $\mathrm{cf}(P_\alpha)<\kappa$ and $\mathrm{otp}(R\cap P_\alpha)=\alpha$ for all $\alpha<\kappa$.
\end{definition}
In our contexts, unbounded pairs admit nice spine filtrations, for the join operation $(f,g)\mapsto f\vee g$ (computed in $({^\lambda}\omega,\leq)$) is a function $j$ as below for both the quasi-orders $\leq$ and $\leq^*$.

\begin{lemma}
\label{increasingunion}

Let $\kappa$ be an uncountable cardinal and let $(P,Q)$ be a $\kappa$-unbounded pair equipped with a function $j: P^2 \to P$ such that:
\begin{enumerate}
\item $x,y\trianglelefteq j(x, y)$ and
\item $j(x,y) \trianglelefteq j(x', y') $
\end{enumerate}
for every $x \trianglelefteq x'$ and $y \trianglelefteq y'$ in $P$.
Then there exists a $\kappa$-spine filtration for $(P, Q)$ such that each $P_\alpha$ is closed with respect to $j$ (and thus, in particular, is directed).
\end{lemma}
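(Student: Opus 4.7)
The plan is to build the filtration $\{P_\alpha\}_{\alpha<\kappa}$ and its accompanying $\kappa$-spine $R=\{r_\beta\mid\beta<\kappa\}$ simultaneously by transfinite recursion on $\alpha$. Two auxiliary sequences are fixed at the outset: the given spine $R^*=\langle r^*_\alpha\mid\alpha<\kappa\rangle$ in $Q$, part of the hypothesis that $(P,Q)$ is a $\kappa$-unbounded pair and here serving as a reservoir of fresh candidates for the new spine elements; and a $\trianglelefteq$-cofinal enumeration $\langle c_\alpha\mid\alpha<\kappa\rangle$ of $P$, which exists because $\mathrm{cf}(P)=\kappa$ and is used to force $\bigcup_{\alpha<\kappa}P_\alpha=P$.

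For the recursion: set $P_0=\emptyset$; at a limit stage $\alpha$ set $P_\alpha=\bigcup_{\beta<\alpha}P_\beta$; at a successor stage $\beta+1$, choose $\delta_\beta<\kappa$ satisfying (i) $\delta_\beta>\delta_\gamma$ for all $\gamma<\beta$ and (ii) $r^*_{\delta_\beta}\notin P_\beta$, set $r_\beta\defeq r^*_{\delta_\beta}$, and let $P_{\beta+1}$ denote the smallest subset of $P$ containing $P_\beta\cup\{r_\beta,c_\beta\}$ that is closed under $j$ and under $\trianglelefteq$-downward closure. The verifications that follow are routine: each $P_\alpha$ is a $j$-closed directed $\trianglelefteq$-order-ideal by construction at successor stages and by monotonicity of union at limits; $\mathrm{cf}(P_\alpha)<\kappa$ holds at successor stages because a $\trianglelefteq$-cofinal subset of $P_{\beta+1}$ of size less than $\kappa$ is provided by the $j$-closure of any such cofinal subset of $P_\beta\cup\{r_\beta,c_\beta\}$ (using that $j$ is binary and monotone), and at limits because of the regularity of $\kappa$; $R\cap P_\alpha=\{r_\beta\mid\beta<\alpha\}$ because each later $r_\gamma$ is chosen outside $P_\gamma\supseteq P_\alpha$, yielding $\mathrm{otp}(R\cap P_\alpha)=\alpha$; the sequence $\langle\delta_\beta\rangle$ is strictly increasing of length $\kappa$ and so cofinal in $\kappa$, making $R$ $\vartriangleleft$-cofinal in $R^*$ and hence in $Q$; and every $p\in P$ satisfies $p\trianglelefteq c_\beta$ for some $\beta<\kappa$, so by downward closure $p\in P_{\beta+1}$, giving $\bigcup P_\alpha=P$.

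The only substantive point is that the choice in (ii) is always available. I would establish it through the auxiliary claim that the $\vartriangleleft$-order type of $P_\beta\cap R^*$ in $R^*$ is strictly less than $\kappa$; because $R^*$ is $\vartriangleleft$-increasing, this is equivalent to saying that $\{\delta<\kappa\mid r^*_\delta\in P_\beta\}$ is bounded in $\kappa$, and since $\{\delta_\gamma\mid\gamma<\beta\}$ is also bounded in $\kappa$ (by regularity), any $\delta_\beta$ above both bounds witnesses (i) and (ii). Were the claim to fail, $P_\beta\cap R^*$ would be $\vartriangleleft$-cofinal in $R^*$ and hence, by transitivity, in $Q$; fixing any $\trianglelefteq$-cofinal subset $C\subseteq P_\beta$ of size $\mathrm{cf}(P_\beta)<\kappa$, each element of $P_\beta\cap R^*$ is $\trianglelefteq$-dominated by some element of $C$, and a pigeonhole argument yields a single $c\in C$ dominating a $\vartriangleleft$-cofinal subset of $P_\beta\cap R^*$, and so of $Q$. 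This $c$ would be a $\trianglelefteq$-upper bound for $Q$ in $P$, contradicting the $\trianglelefteq$-unboundedness of $Q$.
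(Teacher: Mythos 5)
Your proposal is correct and follows essentially the same route as the paper: a joint recursion in which each $P_\alpha$ is the downward closure of the $j$-closure of previously chosen spine points together with an initial segment of a cofinal enumeration of $P$, with the next spine point drawn from the given spine of $Q$ outside the current $P_\alpha$. Your availability argument (boundedness of $\{\delta \mid r^*_\delta \in P_\beta\}$ via $\mathrm{cf}(P_\beta)<\kappa$, regularity of $\kappa$, and the unboundedness of $Q$) is just a slightly reorganized version of the paper's own justification that $R'\setminus P_\beta$ is nonempty.
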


\begin{proof}
Observe that $\kappa$ is regular by definition. Fix a spine $R'\subseteq Q$ witnessing that $(P,Q)$ is $\kappa$-unbounded and let $e: \kappa \to P$ enumerate a cofinal subset of $P$.

We construct an $R=\langle r_\alpha\mid\alpha<\kappa\rangle$ and the family $\langle P_\alpha\mid\alpha<\kappa\rangle$ by joint recursion on $\beta<\kappa$: at each stage $\beta$, the set $\langle r_\alpha\mid\alpha<\beta\rangle$ is already defined and we let $P_\beta$ denote the $\trianglelefteq$-downwards closure of the $j$-closure of $\{r_\alpha\mid\alpha<\beta\}\cup\{e(\alpha)\mid\alpha<\beta\}$.
Because this $j$-closure is cofinal in $P_\beta$ (and of size less than the regular cardinal $\kappa$) and for each of its elements $x$ there exists some $r'\in R'$ unbounded by $x$, the set $R'\backslash P_\beta$ is nonempty; we let $r_\beta$ equal its $\trianglelefteq$-minimal element.
This describes the construction, from which the asserted ordertype, continuity, $j$-closure, and cofinality properties of $\langle P_\alpha\mid\alpha<\kappa\rangle$ and $R$ all immediately follow.
\end{proof}





Below, the quasi-orderings $\leq$ and $\leq^*$ of subcollections $P$ of the families ${^\lambda}\omega$ will often be simultaneously significant.

\begin{definition}
Let $(P,Q)$ be a $\kappa$-unbounded pair within the ordering $({^\lambda}\omega,\leq^\ast)$.
If $Q$ is well-ordered and its identity map induces an order-isomorphism of $(Q, \leq)$ with $(Q, \leq^\ast)$ then we call $Q$ a \emph{strong chain} and call $(P,Q)$ a \emph{strong $\kappa$-unbounded pair}. 
\end{definition}



For better or worse, however, such substructures are, in our main settings of interest, less extensive than one might wish.

\begin{lemma}
\label{lem:no_long_chains}
If $\lambda$ is an uncountable cardinal and $\mathcal{F}$ is a $\leq^*$-increasing chain in ${^\lambda}\omega$ then the cofinality of $\mathcal{F}$ is at most $2^{<\lambda}$.
In particular, under the generalized continuum hypothesis, for every $n\in\mathbb{N}$ there exists no $\leq^*$-chain in ${^{\omega_n}}\omega$ of ordertype $\omega_{n+1}$; more particularly, in the language of Definition \ref{def:spines}, $({^{\omega_n}}\omega,\leq^*)$ possesses no $\omega_{n+1}$-spine in itself.
\end{lemma}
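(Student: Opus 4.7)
Plan: I would show that a strictly $\leq^*$-increasing chain $\mathcal{F} = \langle f_\alpha : \alpha < \kappa\rangle \subseteq {^\lambda}\omega$ with $\lambda$ uncountable cannot have regular length $\kappa > 2^{<\lambda}$; the cofinality bound on arbitrary chains then follows by passing to a cofinal subsequence. Since $2^{<\lambda} \geq \mu^+$ for each $\mu < \lambda$, and $\lambda$ is uncountable, such a $\kappa$ satisfies $\kappa \geq \lambda^+$.

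The main apparatus is the tree $T \subseteq {^{<\lambda}}\omega$ of restrictions $\{f_\alpha \restriction \mu : \alpha < \kappa,\, \mu < \lambda\}$ ordered by extension. Each level $T_\mu$ has at most $2^\mu \leq 2^{<\lambda}$ nodes, so $|T| \leq 2^{<\lambda} < \kappa$; at each level the restriction map partitions $\kappa$ into at most $2^\mu$ fibers $A_t := \{\alpha : f_\alpha \restriction \mu = t\}$, and the regularity of $\kappa > 2^\mu$ ensures at least one $A_t$ of size $\kappa$. Call any such $t$ \emph{fat}; initial segments of fat nodes are fat, so the fat nodes comprise a subtree of $T$ with fat elements at every level.

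The intended next step is a recursive construction of a coherent ``fat stem'' $h \in {^\lambda}\omega$. At successor stages $\mu \to \mu+1$ one chooses $h(\mu)$ to be a value in $\omega$ occurring in $\kappa$-many $\alpha \in A_{h \restriction \mu}$, which exists by a further pigeon-hole. Should the construction succeed down to level $\lambda$ with the nested fibers $A_{h \restriction \mu}$ remaining of cardinality at least $2$ throughout, then any two distinct indices $\alpha, \beta$ in the limiting intersection $A_h = \bigcap_{\mu < \lambda} A_{h \restriction \mu}$ would satisfy $f_\alpha = h = f_\beta$, directly contradicting the strictness of $\mathcal{F}$.

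The main obstacle arises at the limit stages of the recursion. For $\mu < \lambda$ a limit, $h \restriction \mu$ is forced to be $\bigcup_{\mu' < \mu} h \restriction \mu'$, and its fiber $A_{h \restriction \mu}$ is the descending intersection $\bigcap_{\mu' < \mu} A_{h \restriction \mu'}$; such an intersection of fewer than $\kappa$ many $\kappa$-cofinal subsets of the regular $\kappa$ need not remain $\kappa$-cofinal --- and may be empty. To overcome this, I would strengthen the induction hypothesis to keep each $A_{h \restriction \mu}$ within a $<\kappa$-complete normal filter on $\kappa$ (for instance, the club filter, closed under intersections of length $<\kappa$ by regularity), which forces a redesigning of the successor step to select, among the $\omega$-many extensions, a value whose fiber retains the filter property; this is the delicate point in which the structure of strict $\leq^*$-chains must be leveraged. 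Granted the main statement, the ``in particular'' clause is immediate: under \textsf{GCH}, $2^{<\omega_n} = \omega_n$ for every $n \geq 1$, so no $\leq^*$-chain in $({^{\omega_n}}\omega, \leq^*)$ has cofinality $\omega_{n+1}$, and a fortiori no $\omega_{n+1}$-spine in itself can exist.
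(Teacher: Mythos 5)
There is a genuine gap, and it sits exactly where you locate ``the delicate point.'' The limit-stage problem cannot be patched by asking that the fibers $A_{h\restriction\mu}$ lie in a $<\kappa$-complete normal filter such as the club filter: a fiber of the exact-restriction partition is just one piece of a partition of $\kappa$ into at most $2^{<\lambda}<\kappa$ classes, and regularity of $\kappa$ only guarantees a piece of cardinality $\kappa$ (or, using $<\kappa$-completeness of the nonstationary ideal, a stationary piece); neither ``size $\kappa$'' nor stationarity is preserved under the descending intersections of length up to $\lambda$ that your recursion needs, and there is no reason whatsoever that any fiber should contain a club. Demanding that at every level some fiber belong to one fixed normal $<\kappa$-complete filter is essentially demanding a measurable-like hypothesis on $\kappa$, which is not available. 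More decisively: as designed, your argument never uses the hypothesis that $\mathcal{F}$ is a $\leq^*$-chain. If the fat-branch recursion could be completed from the cardinality bound on the levels of $T$ alone, the same argument applied to an arbitrary family of $\kappa$ pairwise distinct elements of ${^\lambda}\omega$ (with $\kappa>2^{<\lambda}$ regular) would produce two indices with $f_\alpha=f_\beta$, which is false whenever $2^\lambda>2^{<\lambda}$ --- take $\kappa=(2^{<\lambda})^+$ many distinct functions. So the step you defer is not a technical refinement but the entire content of the lemma, and the mod-finite structure of the chain, which your tree of exact restrictions ignores, must enter essentially.

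For comparison, the paper's proof goes through cofinality rather than exact-value fibers. One passes to a $<^*$-increasing cofinal subchain of regular ordertype $\mu>2^{<\lambda}$ and observes that for each $\alpha<\lambda$ the restricted family $\mathcal{F}\restriction\restriction\alpha\subseteq{^\alpha}\omega$ has $\leq^*$-cofinality at most $2^{<\lambda}<\mu$; by regularity of $\mu$ the indices of a witnessing cofinal set are bounded in the chain, so that cofinal set is dominated by a single $f\restriction\alpha$ with $f\in\mathcal{F}$, and hence $\mathcal{F}\restriction\restriction\alpha$ has a $\leq^*$-greatest element $f(\alpha)\restriction\alpha$. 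Since $\mu>2^{<\lambda}\geq\lambda$, one may choose $f\leq^* g$ in $\mathcal{F}$ lying $\leq^*$-above all the $f(\alpha)$, together with $\beta<\lambda$ such that $f\restriction\beta\not\geq^* g\restriction\beta$; then $g\restriction\beta\leq^* f(\beta)\restriction\beta\leq^* f\restriction\beta$ gives the contradiction. Your reduction to regular $\kappa$ and your derivation of the ``in particular'' clause from $2^{<\aleph_n}=\aleph_n$ under \textsf{GCH} are fine, but the core of the main bound is missing.
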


\begin{proof}
Fix, for contradiction, an $\mathcal{F}'$ witnessing the failure of the lemma and an $<^*$-increasing enumeration of a cofinal subset $\mathcal{F}=\langle f_\xi:\lambda\to\omega\mid\xi<\lambda\rangle$ of $\mathcal{F}'$ which is of regular cardinal ordertype $\mu>2^{<\lambda}$.
For any $\alpha<\lambda$, let $\nu(\alpha)$ denote the $\leq^*$-cofinality of $\mathcal{F}\restriction\restriction\alpha=\langle f_\xi\restriction\alpha:\alpha\to\omega\mid\xi<\lambda\rangle$; in particular, for each $\alpha<\lambda$ there exists a $c_\alpha:\nu(\alpha)\to\mu$ with $\langle f_{c_\alpha(i)}\restriction\alpha\mid i<\nu(\alpha)\rangle$ an increasing cofinal subset of $\mathcal{F}\restriction\restriction\alpha$.
Since $\nu(\alpha)\leq 2^{<\lambda}$, the image of $c_\alpha$ is bounded by some $f\in\mathcal{F}$ and since, therefore, $f_{c_\alpha(i)}\restriction\alpha\leq^* f\restriction\alpha$ for all $i<\nu(\alpha)$, it follows that $\nu(\alpha) = 1$.

Thus we have for each $\alpha<\lambda$ an $f(\alpha)\in\mathcal{F}$ whose $\alpha$-truncation bounds $\mathcal{F}\restriction\restriction\alpha$.
Choose $f\leq^* g$ in $\mathcal{F}$ with both $\leq^*$-above all $f(\alpha)$ $(\alpha<\lambda)$ and a $\beta<\lambda$ such that $f\restriction\beta\not\geq^* g\restriction\beta$.
By definition, though, $g\restriction\beta\leq^* f(\beta)\leq^* f\restriction\beta$, a contradiction.
\end{proof}


    










    
By the above combined with the easy fact below (see \cite[Proposition 2.12]{monk2004general}) the generalized continuum hypothesis implies that  the following holds for every $n\in\mathbb{N}$: there exist no $(n+1)$-unbounded pairs $(({}^{\omega_n} \omega , \leq^\ast),Q)$. 
\begin{fact}
If $\lambda$ is an infinite cardinal then $2^{\lambda} = \lambda^+$ implies that $\mathrm{cf}({}^{\lambda} \omega , \leq^\ast) = \lambda^+$.
\end{fact}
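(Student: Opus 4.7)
The plan is to establish matching upper and lower bounds on $\mathrm{cf}({}^{\lambda}\omega, \leq^*)$. The upper bound is immediate: ${}^{\lambda}\omega$ is trivially $\leq^*$-cofinal in itself, and under the hypothesis $2^\lambda = \lambda^+$ its cardinality is $\omega^\lambda = 2^\lambda = \lambda^+$, so $\mathrm{cf}({}^{\lambda}\omega, \leq^*) \leq \lambda^+$.

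The content therefore lies in the lower bound, i.e., showing that no $\lambda$-sized family in ${}^{\lambda}\omega$ is $\leq^*$-cofinal. For this I would diagonalize. Given any enumeration $\langle f_\alpha \mid \alpha < \lambda\rangle$ of a would-be cofinal family, first partition $\lambda$ into $\lambda$-many pairwise disjoint infinite pieces $\langle A_\alpha \mid \alpha < \lambda\rangle$; the assignment $A_\alpha = \{\omega\cdot\alpha + n \mid n < \omega\}$ works, since $\omega\cdot\lambda = \lambda$ for every infinite cardinal $\lambda$. Define $g : \lambda \to \omega$ by
\[
g(\xi) = f_\alpha(\xi) + 1 \quad\text{whenever }\xi \in A_\alpha.
\]
Then for each $\alpha < \lambda$ we have $g(\xi) > f_\alpha(\xi)$ on the infinite set $A_\alpha$, so $g \not\leq^* f_\alpha$. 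Hence $\langle f_\alpha \mid \alpha < \lambda\rangle$ is not $\leq^*$-cofinal, and $\mathrm{cf}({}^{\lambda}\omega, \leq^*) > \lambda$, i.e., $\geq \lambda^+$.

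Combining the two bounds yields $\mathrm{cf}({}^{\lambda}\omega, \leq^*) = \lambda^+$. There is no genuine obstacle in the argument; note in particular that the hypothesis $2^\lambda = \lambda^+$ enters only to bound the cardinality of ${}^{\lambda}\omega$ in the upper bound, while the lower bound is a straightforward diagonalization valid for all infinite $\lambda$.
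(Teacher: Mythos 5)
Your proof is correct; the paper states this fact without proof (citing Monk's Proposition 2.12), and your diagonalization --- partitioning $\lambda$ into $\lambda$ many infinite blocks and exceeding the $\alpha$-th function on the $\alpha$-th block --- is exactly the standard argument behind that citation. One small point worth making explicit: to conclude $\mathrm{cf}({}^{\lambda}\omega,\leq^*)\geq\lambda^+$ you must rule out cofinal families of every size $\leq\lambda$, not just size exactly $\lambda$, but your argument covers this since any such family can be enumerated (with repetitions) in order type $\lambda$.
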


The broad implication is that analyses of the derived limits of uncountably wide systems $\mathbf{A}_\lambda$ will tend to involve more general order relations than linear ones.
Underlying our Theorem B, however, is the observation that, even as it constrains the appearance of linear suborders of ${^{\omega_n}\omega}$, the generalized continuum hypothesis furnishes the next best thing, namely $\omega_{n+1}$-Aronszajn tree structures organizing length-$\omega_{n+1}$ filtrations of ${^{\omega_n}}\omega$ by families $\{(P_\alpha,Q_\alpha)\mid\alpha<\omega_{n+1}\}$ of unbounded pairs.
To fix notation, let us briefly recall these structures' basic features.

\begin{definition}
A partial order $(T,\prec)$ is a \emph{tree} if the set $t\!\downarrow:=\{s\in T\mid s \prec t\}$ is well-ordered for any $t\in T$.
The \emph{height} $\mathrm{ht}_T(t)$ of such a $t$ is the ordertype of $t\!\downarrow$.
We write $\mathrm{lev}_\alpha(T)$ for the $\alpha^{\mathrm{th}}$ level $\{t\in T\mid \mathrm{ht}_T(t)=\alpha\}$ of $T$; the height of $T$ is the least $\alpha$ for which $\mathrm{lev}_\alpha(T)=\varnothing$.
For any $n\geq 0$ we write $\Lambda_n$ for the class of multiples of $\omega_n$; for any class $X$ of ordinals, we write $T\restriction X$ for the restriction of $\prec$ to $\{t\in T\mid \mathrm{ht}_T(t)\in X\}$.
For any regular infinite cardinal $\lambda$, a \emph{$\lambda^+$-Aronszajn tree} is a tree $T$ of height $\lambda^+$, with levels all of cardinality at most $\lambda$; such a $T$ is \emph{special} if it admits an $F:T\to\lambda$ for which $F(s)=F(t)$ implies that $s$ is incomparable with $t$.
\end{definition}

The derivation of special Aronszajn trees from cardinal arithmetic hypotheses traces essentially to Specker in 1949.
\begin{theorem}[\cite{Speck49}]
Let $\lambda$ be a regular infinite cardinal.
If $2^{<\lambda}=\lambda$ then there exists a special $\lambda^+$-Aronszajn tree.     
\end{theorem}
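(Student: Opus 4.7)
The plan is to adapt Specker's classical construction. I would build, by recursion on $\alpha < \lambda^+$, a \emph{coherent} family of injections $\langle e_\alpha : \alpha \hookrightarrow \lambda\rangle_{\alpha < \lambda^+}$ --- coherent in the sense that $|\{\gamma < \alpha : e_\alpha(\gamma) \neq e_\beta(\gamma)\}| < \lambda$ whenever $\alpha < \beta < \lambda^+$ --- and take
\[
T := \{\, t : \alpha \hookrightarrow \lambda \mid \alpha < \lambda^+,\ |\{\gamma < \alpha : t(\gamma) \neq e_\alpha(\gamma)\}| < \lambda \,\},
\]
ordered by end-extension. Coherence of $\langle e_\alpha\rangle$ ensures closure under initial segments, so $T$ is a well-defined tree of height $\lambda^+$.

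The hypothesis enters through its consequence $\lambda^{<\lambda} = \lambda$ (which follows from $2^{<\lambda} = \lambda$ by regularity of $\lambda$), and yields three things. First, each level $T_\alpha$ has cardinality at most $\lambda$: every $t \in T_\alpha$ is specified by its \emph{difference set} $D_t := \{\gamma < \alpha : t(\gamma) \neq e_\alpha(\gamma)\} \in [\alpha]^{<\lambda}$ together with $t \upharpoonright D_t$, and there are at most $|\alpha|^{<\lambda} \cdot \lambda^{<\lambda} = \lambda$ such pairs. Second, $T$ admits no cofinal branch, since such a branch would furnish an injection $\lambda^+ \hookrightarrow \lambda$. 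Third, fixing an injection $\iota : [\lambda]^{<\lambda} \times {}^{<\lambda}\lambda \hookrightarrow \lambda$, one defines a specialising map $F: T \to \lambda$ via a carefully chosen encoding of the push-forward of $(D_t, t \upharpoonright D_t)$ through $e_\alpha$; the coherence of $\langle e_\alpha\rangle$ together with tracking of where an extension $s \prec t$ introduces new difference-coordinates must be exploited to force $F(s) \neq F(t)$.

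I expect the main obstacle to be the construction of $\langle e_\alpha\rangle$ at limits $\alpha$ with $\mathrm{cf}(\alpha) = \lambda$. Successor stages are trivial, and limits of cofinality below $\lambda$ are handled by concatenation along a fixed ladder system. At a cofinality-$\lambda$ limit, however, one must produce a single injection $e_\alpha: \alpha \hookrightarrow \lambda$ simultaneously coherent with each of the $\lambda$-many previously constructed $e_\beta$ ($\beta < \alpha$). The remedy is a diagonal construction along a continuous cofinal sequence $\langle \alpha_i : i < \lambda\rangle$ in $\alpha$: at stage $i$ declare $e_\alpha \upharpoonright [\alpha_i, \alpha_{i+1})$ to agree with $e_{\alpha_{i+1}} \upharpoonright [\alpha_i, \alpha_{i+1})$ except on a set of size $< \lambda$ crafted to avoid (i) values already used by $e_\alpha$ on $\alpha_i$ and (ii) the accumulated obstructions to coherence with the $e_{\alpha_j}$ for $j \leq i$. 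Regularity of $\lambda$ keeps each block-level perturbation of size $<\lambda$, and a bookkeeping argument threading a continuous cofinal sequence through every $\beta < \alpha$ closes the construction.
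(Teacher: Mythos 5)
The paper offers no proof of this statement (it simply cites Specker), so I will judge your outline on its own terms. Most of it is sound: $2^{<\lambda}=\lambda$ plus regularity does give $\lambda^{<\lambda}=\lambda$; coherence does make $T$ closed under initial segments; the difference-set count does bound each level by $\lambda$; a cofinal branch would indeed yield an injection of $\lambda^+$ into $\lambda$; and your diagnosis of the cofinality-$\lambda$ amalgamation as the delicate point in building $\langle e_\alpha\rangle$ is correct, and the block-by-block remedy you sketch can be made to work (one must also maintain, as an explicit induction hypothesis, that $\lambda\setminus\mathrm{ran}(e_\beta)$ has size $\lambda$, since at cofinality-$\lambda$ limits the copied blocks can otherwise exhaust $\lambda$ and leave no fresh values for the injectivity repairs; this requires deliberately steering a size-$\lambda$ set of values out of the range as the blocks are processed).

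The genuine gap is the specialization step, which is the actual content of the theorem: by the paper's definition an Aronszajn tree only needs height $\lambda^+$ and levels of size $\le\lambda$, so branchlessness alone proves nothing here, and your proposed $F$ --- an encoding of $(D_t,\,t\restriction D_t)$ pushed forward through $e_\alpha$ --- cannot be made injective on chains, no matter how the coherence is ``exploited.'' Comparable nodes can carry identical difference data: nothing in the construction prevents $e_{\alpha+1}$ from end-extending $e_\alpha$ exactly, in which case $e_\alpha\prec e_{\alpha+1}$ and both code the empty set; more generally, if $t\in T_\alpha$ and $t'$ is the one-point extension of $t$ taking the value $e_{\alpha+1}(\alpha)$ at $\alpha$ (with $e_{\alpha+1}$ end-extending $e_\alpha$), then $t\prec t'$ and the pushed-forward data of $t$ and $t'$ coincide. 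The correct mechanism uses the injectivity of the nodes themselves rather than their distance from the spine: for nodes whose domain is a successor $\alpha+1$, the map $F(t)=t(\alpha)$ is injective on chains, since if $s=t\restriction(\beta+1)\prec t$ then $F(s)=t(\beta)\neq t(\alpha)=F(t)$ because $t$ is injective. So either redefine $T$ to consist only of the nodes with successor-length domains (this is still a tree of height $\lambda^+$ with levels of size $\le\lambda$, hence the desired special $\lambda^+$-Aronszajn tree), or supply a separate argument for limit levels; as written, your argument produces a $\lambda^+$-Aronszajn tree but does not establish that it is special. (The classical route, e.g.\ via a level-by-level construction of bounded increasing functions into a dense linear order of size $\lambda$ with an extension property, bypasses both this issue and the coherent-sequence amalgamation.)
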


The following lemma is this section's main result.
For simplicity, when $n$ is finite, we will tend below to write $n$-\textit{unbounded pair} for what is more properly denoted an \emph{$\aleph_n$-unbounded pair}, and we will similarly write $n$-\textit{spine filtration} for \emph{$\aleph_n$-spine filtration} in the sections which follow.
\begin{lemma}
\label{lemmatree}
Suppose that $n>0$ and $2^{\aleph_n} = \aleph_{n+1}$ and, hence, that there exists a special $\omega_{n+1}$-Aronszajn tree $T$.
There then exists
\begin{enumerate}
\item a length-$\omega_{n+1}$ continuous filtration of 
$^{\omega_n} \omega$
by $\leq^*$-order-ideals $P_\alpha$ each of cofinality at most $\aleph_n$, and
\item an order-preserving function $H : T\restriction\Lambda_n \to (^{\omega_n}\omega,\leq)$
\end{enumerate}
such that for all $\alpha \in S_n^{n+1}$ there exists a $Q_\alpha \subseteq H[T\restriction (\Lambda_n\cap\omega_n\cdot\alpha)]$ for which:
\begin{enumerate}
\setcounter{enumi}{2}
\item $(P_\alpha, Q_\alpha)$ is a strong $n$-unbounded pair, and
\item there exists an $f_\alpha \in P_{\alpha+1} $ such that $g \leq f_\alpha$ for every $g \in Q_\alpha$, and, moreover, every such $g$ falls strictly below $f_\alpha$ on infinitely many coordinates.
\end{enumerate}
\end{lemma}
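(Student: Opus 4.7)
The plan is a joint recursion on $\alpha < \omega_{n+1}$ that simultaneously constructs $H$ level-by-level on $T\restriction\Lambda_n$, the filtration $\langle P_\alpha : \alpha < \omega_{n+1}\rangle$, and the distinguished data $(t_\alpha, Q_\alpha, f_\alpha)$ at stages $\alpha \in S_n^{n+1}$. First I would fix a specializing function $F:T\to\omega_n$ witnessing $T$'s specialness and, using $2^{\aleph_n}=\aleph_{n+1}$, enumerate ${}^{\omega_n}\omega$ as $\{g_\gamma : \gamma<\omega_{n+1}\}$.

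For $H$ I would use a counting scheme whose values track, along the path from the root to $t$, the $F$-hits at each coordinate, suitably augmented (e.g.\ via a secondary coordinate-tagging) so that the collection $\{H(t) : t \in T\restriction\Lambda_n\}$ grows unboundedly in $({}^{\omega_n}\omega,\leq^*)$ rather than being dominated by any fixed element such as the constantly-one function. The injectivity of $F$ along branches keeps each value $H(t)(\xi)$ a finite natural number and makes the chains $\{H(s) : s \prec t\}$ pointwise bounded at limit levels, so $H(t)$ can be defined as a suitable upper bound of its predecessors; order-preservation is built into the counting. The ``strict on infinitely many coordinates'' part of (4), and with it the strong-chain content of (3), then follows from the observation that for $\alpha \in S_n^{n+1}$ and any $s \prec t_\alpha$, the interval between $s$ and $t_\alpha$ spans infinitely many $\Lambda_n$-levels whose $F$-tags are distinct.

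In parallel I would define $P_\alpha$ as the $\leq^*$-downward and $\vee$-closure of $\{H(s) : s \in T\restriction(\Lambda_n\cap\omega_n\cdot\alpha)\} \cup \{g_\gamma : \gamma<\alpha\} \cup \{f_\beta : \beta \in S_n^{n+1}\cap\alpha\}$; the Aronszajn level-size bound $|\mathrm{lev}_\gamma(T)|\leq\aleph_n$ together with $|\alpha|\leq\aleph_n$ yields $\mathrm{cf}(P_\alpha)\leq\aleph_n$, continuity at limits is automatic, and $\bigcup_\alpha P_\alpha = {}^{\omega_n}\omega$ via the $g_\gamma$'s. For $\alpha\in S_n^{n+1}$ one then picks $t_\alpha \in \mathrm{lev}_{\omega_n\cdot\alpha}(T)$ so that $Q_\alpha := H[\{s \prec t_\alpha : \mathrm{ht}(s) \in \Lambda_n\}]$ is $\leq^*$-unbounded in $P_\alpha$, and sets $f_\alpha := H(t_\alpha)$, which is explicitly added among the generators of $P_{\alpha+1}$ at the next recursion step.

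The main obstacle is precisely this diagonal choice of $t_\alpha$: among the at most $\aleph_n$ nodes at level $\omega_n\cdot\alpha$, at least one must have its branch image escape every $\leq^*$-bound available in $P_\alpha$. Since $P_\alpha$ has cofinality $\aleph_n$, there are essentially $\aleph_n$ such bounds to dodge; calibrating the coordinate-tagging in $H$ against the $\aleph_n$-many generators of $P_\alpha$ so that the $H$-image of all but fewer than $\aleph_n$ of the available branches outpaces every such bound is the technical heart of the construction and dictates the precise design of $H$. Once this is in place, conditions (1)--(4) follow directly from the bookkeeping and the order-theoretic properties of $H$ described above.
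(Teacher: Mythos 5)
You have the right scaffolding: a joint recursion on $\alpha<\omega_{n+1}$ that builds $H$, $P_\alpha$, and the distinguished data together; a specializing $F:T\to\omega_n$; an enumeration of ${}^{\omega_n}\omega$ folded into the $P_\alpha$'s so that the filtration exhausts the whole family; and the Aronszajn level bound to keep $\mathrm{cf}(P_\alpha)\leq\aleph_n$. But the step you explicitly flag as ``the technical heart'' and leave as a black box is not a calibration detail; it is where the lemma's actual content lives, and your partial ideas don't supply it. A ``counting scheme'' that tracks $F$-hits along a branch produces only $\{0,1\}$-valued functions (since $F$ is injective on chains), hence functions dominated by the constantly-$1$ map, and it is not clear what ``secondary coordinate-tagging'' would rescue this. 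The paper's device is different in kind: alongside $H$ and $P_\alpha$, the recursion also produces auxiliary maps $G_\xi:\omega_n\to P_{\xi+1}$ with $\leq^*$-cofinal image; the specializing function assigns to each $z<_T x$ an infinite \emph{block} of coordinates $B_{F(z)}=[\omega\cdot F(z),\omega\cdot F(z)+\omega)$, pairwise disjoint along a branch; and $H(x)$ is set equal to $G_{q_n(\mathrm{ht}_T(z))}(r_n(\mathrm{ht}_T(z)))+1$ on $B_{F(z)}$ and $0$ elsewhere. In other words, $H$ is a pointwise diagonalization against a cofinal sub-family of the $P_\alpha$'s, routed through the tree via the blocks that $F$ selects. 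This is the missing idea, and it changes the shape of the conclusion: for limit $\alpha$ and any $f$ in the cofinal set $\bigcup_{\xi<\alpha}\mathrm{im}(G_\xi)$, one has $f=G_\eta(\varepsilon)$ with $\eta<\alpha$, so $H(x)>f$ on the entire infinite block $B_{F(z)}$ for $z$ the unique predecessor of $x$ at height $\omega_n\eta+\varepsilon$; hence \emph{every} node $x$ at a limit level satisfies $H(x)\notin P_\alpha$.

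Because the construction gives this for every node, the ``diagonal choice of $t_\alpha$'' you describe as the main obstacle is unnecessary: the paper simply fixes an arbitrary $y_\alpha\in\mathrm{lev}_\alpha(T')$, takes $Q_\alpha=H[\{x\in T'\mid \mathrm{ht}_{T'}(x)\text{ is a limit},\ x<_T y_\alpha\}]$, and sets $f_\alpha=H(y_\alpha)$. Your proposal, as stated, has a genuine gap here: nothing you've written rules out that the $H$-images of \emph{all} branches up to level $\alpha$ are $\leq^*$-bounded inside $P_\alpha$ (for instance, by the enumerated $g_\gamma$'s already thrown in); the specializing function by itself does not prevent this, and the ``escape'' must be engineered into $H$ via something like the $G_\xi$'s. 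Your argument for the ``strictly below on infinitely many coordinates'' part of (4) is likewise incomplete: distinct $F$-tags along the interval from $s$ to $t_\alpha$ don't by themselves yield strict inequality; in the paper this follows because $H(x)\leq^* G_\eta(\varepsilon)$ for a suitable $\eta,\varepsilon$ tied to $\mathrm{ht}_{T'}(x)$, while $H(y_\alpha)>G_\eta(\varepsilon)$ everywhere on an infinite block, giving strict inequality on all but finitely many of those infinitely many coordinates. I'd encourage you to re-derive $H$ with the auxiliary $G_\xi$'s as the guiding device; the rest of your outline then goes through essentially as you describe.
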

\begin{proof}
Fix a bijection $e: \omega_{n+1} \to {}^{\omega_n} \omega$ and a specializing function $F:T \to \omega_n$.
Write $T'$ for $T\restriction \Lambda_n$ (so that $\mathrm{lev}_\alpha(T') = \mathrm{lev}_{\omega_n \cdot \alpha} (T)$) and for $\beta < \omega_n$ let $B_\beta = [\omega \cdot\beta,\omega\cdot\beta + \omega )$.
We will define an order-preserving function $H: T' \to ({}^{\omega_n} \omega, \leq)$ and filtration
\[
{}^{\omega_n} \omega=\bigcup_{\alpha<\omega_{n+1}}P_\alpha
\]
such as the lemma posits by recursion on the ordinals $\alpha<\omega_{n+1}$, conserving the condition that at each stage $\alpha$, each of the following objects is already defined:
\begin{itemize}
\item $H \restriction (T'\restriction\alpha)$, 

\item the $\leq^*$-order-ideal $P_{\alpha}$ generated by $H [T'\restriction\alpha] \cup e[\alpha]$, and

\item functions $G_\xi: \omega_n \to P_{\xi+1}$ with $\leq^*$-cofinal image in $P_{\xi+1}$ for each $\xi<\alpha$.
\end{itemize}

At the stage $\alpha=1$, for example, we may assume that $H$ of the root of $T$ is the constant function $\omega_n\to\{0\}$, so that $P_1$ is the $\leq^*$-order-ideal generated by $e(0)$, and we may let $G_0$ constantly take the value $e(0)$.

At any given stage $\alpha$, we extend the definition of $H$ to $\mathrm{lev}_\alpha(T')$ as follows: for $x\in\mathrm{lev}_\alpha(T')$ and $\gamma\in\omega_n$, let 
\[
H(x)(\gamma) =  \begin{cases} 
G_{q_n(\mathrm{ht}_T(z))} (r_n(\mathrm{ht}_T(z))) (\gamma) +1 & \text{if } \gamma \in B_{F(z)}\text{ for some }z <_T x \\
0 & \textrm{otherwise.} 
\end{cases}
\]
As indicated above, $P_{\alpha+1}$ is then the $\leq^*$-order-ideal generated by $H [T'\restriction\alpha+1]\cup c[\alpha+1]$.
That $T'$ is Aronszajn thus ensures that $P_{\alpha+1}$ is of $\leq^*$-cofinality at most $\omega_n$, and hence that selecting a map $G_\alpha : \omega_n \to P_{\alpha+1}$ with $\leq^\ast$-cofinal image presents no difficulties.
These definitions furnish the material for subsequent stages; this describes the construction.

To see that $H$ is order-preserving, fix $x \leq_{T'} y$ and observe that $\gamma \in \mathrm{supp}(x)$ implies that $\gamma \in B_{F(z)}$ for some $z <_T x \leq_T y$ and thus that $$H(y) (\gamma) = G_{q_n(\mathrm{ht}_T(z))} (r_n(\mathrm{ht}_T(z)))(\gamma)+1 = H(x)(\gamma).$$ 
This verifies the lemma's item (2); item (1) is immediate from our definition of the ideals $P_\alpha$.
Furthermore, by the continuity of this filtration,  if $\alpha<\omega_{n+1}$ is a limit then $\bigcup_{\xi < \alpha} \mathrm{im}(G_\xi)$ is $\leq^*$-cofinal in $P_\alpha$; put differently, any $f$ in this cofinal set is of the form $f = G_\eta (\varepsilon)$ for some $\eta < \alpha$ and $\varepsilon < \omega_n$.
For any $x \in \mathrm{lev}_\alpha(T')$, then, letting $z$ be the unique node below $x$ in $T$ such that $\mathrm{ht}_T(z)=\omega_n \cdot \eta + \varepsilon$, we have that
$$H(x)(\gamma) = G_\eta(\varepsilon)(\gamma)+1 > G_\eta (\varepsilon)(\gamma) = f(\gamma)$$ for each of the infinitely many $\gamma$ in the interval $B_{F(z)}$.
This shows that $H(x) \not \in P_\alpha$, or, more generally, that $H[\mathrm{lev}_\alpha(T')] \cap P_\alpha = \emptyset$ whenever $\alpha<\omega_{n+1}$ is a limit.
It also implies that if $x <_{T'} y$ are both of limit heights then $H(x)$ falls strictly below $H(y)$ on infinitely many coordinates, and a similar argument shows that if $C\subseteq T'$ is a chain of limit-of-limits
ordertype then $H[C]$ is $\leq^*$-unbounded in $P_\alpha$ for $\alpha=\sup\, \{\mathrm{ht}_{T'}(x)\mid x \in C\}$.
In particular, choosing for each $\alpha \in S_n^{n+1}$ a $y_\alpha \in \mathrm{lev}_\alpha(T')$ and letting $$Q_\alpha = H[\{x\in T'\mid \mathrm{ht}_{T'}(x)\text{ is a limit and }x <_T y_\alpha \}]$$ and letting $f_\alpha = H(y_\alpha)$ defines families of functions as posited in the lemma's items (3) and (4). 
\end{proof}

\section{The continuity and existence of twistable sets}
\label{sect:twistable}

Assume now the hypotheses and notations of Lemma \ref{lemmatree}.
To prove Theorem B, in Sections \ref{sect:nontriviality} and \ref{sect:thmB} we will construct witnesses to $\mathrm{lim}^{n+1}\,\mathbf{A}_{\aleph_n} \neq 0$ which restrict  to an $(n+1)$-coherent nontrivial family on $\mathrm{im}(H)$. This witness is in turn built from $n$-coherent but non-$n$-trivial families
on $P_\alpha$ whose restriction to $Q_\alpha$ is nontrivial for every $\alpha$ in $S^{n+1}_n$.
The construction of these $n$-coherent families from guessing principles is much as in \cite{Cas24}; the setting of \emph{wide} inverse systems, however, introduces an additional difficulty.
This is the following: the inductive arguments of Section \ref{sect:nontriviality} require a small list (e.g., a $\lozenge(S^{k+1}_k)$-sequence) of codes for initial fragments of putative trivializations of $(k+1)$-coherent families, as $k$ ranges below $n$.
However, since $\vert I(f)  \vert = \aleph_n$ for any $f \in {^{\omega_n}}\omega$, we cannot even hope to code all functions $I(f)\to\mathbb{Z}$ in a sequence $\langle S_\alpha \subseteq\alpha \mid \alpha < \omega_{k+1}\rangle$.
To address this issue, we introduce the notion of a \emph{$k$-twistable set}, which is subset of the grid $\omega_n \times \omega$ just large enough to support nontrivial $k$-coherence for $(P_\alpha, Q_\alpha)$; within our argument, it will then be sufficient to code putative trivializations on twistable sets.

\begin{definition}
Let $S$ be a set of ordinals. We say that $\alpha$ is a limit point of $S$ if $\alpha \in S$ and $\alpha = \sup(\alpha \cap S)$.
\end{definition}

\begin{definition}

Let $(P,Q)$ be such that $Q \subseteq P \subseteq {}^{\omega_n} \omega$ and $\mathrm{cf}(P) \leq \aleph_0$ with respect to $\leq^\ast$.
We say that $E \subseteq \omega_n \times \omega$ is \emph{$0$-twistable for $(P,Q)$} if $\vert E \vert = \aleph_0$ and $\vert E \cap I(f) \vert < \aleph_0$ for all $f \in P$. 

Next, fix $k \geq 0$ and let $(\langle P_\alpha \mid \alpha < \omega_{k+1} \rangle, Q')$ be a $(k+1)$-spine filtration for some strong $(k+1)$-unbounded pair $(P, Q)$.
We say that \emph{$E$ is $(k+1)$-twistable for $(\langle P_\alpha \mid \alpha < \omega_{k+1} \rangle,  Q')$} if $E$ is the union of a sequence $\mathcal{E}=\langle E_\alpha\mid \alpha \in \{0\}\cup S^{k+1}_k \rangle$ of sets such that

\begin{enumerate}
    \item[(0)] $\vert E_0 \vert \leq \aleph_k$ and $E_0 \subseteq I(Q'(0))$, 
\end{enumerate}

\begin{enumerate}

    \item $E_\alpha \subseteq I(Q'(\alpha))$ and
    
    \item $E_\alpha$ is $k$-twistable for $(P_\alpha, Q' \restriction \alpha)$ 
\end{enumerate}    
    for all $\alpha\in  S^{k+1}_k$, and
\begin{enumerate}

    \item[(3)] $E_\alpha \subseteq \bigcup (\mathcal{E}\restriction\alpha)$ for any limit point $\alpha$ of $S_k^{k+1}$.
\end{enumerate}

\noindent Similarly, we will say that $E$ is $(k+1)$-twistable for a strong unbounded pair $(P,Q)$ if and only if there exists a spine filtration of $(P,Q)$ for which $E$ is $(k+1)$-twistable.
\end{definition}

Twistable sets are small by definition. 

\begin{lemma}
\label{sizetwist}
If $E$ is $k$-twistable, then $\vert E \vert \leq \aleph_k$.
\end{lemma}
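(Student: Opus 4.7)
The plan is to prove this by straightforward induction on $k$, with the heavy lifting already encoded into the recursive definition of twistability.

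For the base case $k=0$, the conclusion is immediate from the definition: a $0$-twistable set is required to have cardinality exactly $\aleph_0$, hence at most $\aleph_0 = \aleph_k$.

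For the inductive step, suppose the statement holds for $k$ and let $E$ be $(k+1)$-twistable, witnessed by a sequence $\mathcal{E} = \langle E_\alpha \mid \alpha \in \{0\} \cup S^{k+1}_k \rangle$ with $E = \bigcup \mathcal{E}$. Clause (0) of the definition gives $|E_0| \leq \aleph_k$ directly. For each $\alpha \in S^{k+1}_k$, clause (2) says $E_\alpha$ is $k$-twistable for $(P_\alpha, Q' \restriction \alpha)$, so the inductive hypothesis yields $|E_\alpha| \leq \aleph_k$. Since the index set $\{0\} \cup S^{k+1}_k$ has cardinality at most $\aleph_{k+1}$, we conclude that
\[
|E| \;\leq\; \sum_{\alpha \in \{0\} \cup S^{k+1}_k} |E_\alpha| \;\leq\; \aleph_{k+1} \cdot \aleph_k \;=\; \aleph_{k+1},
\]
completing the induction.

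There is no real obstacle here: the definition of $(k+1)$-twistability is already designed so that the size bound cascades from one level to the next, and condition (3) of the definition (requiring $E_\alpha$ at limit points to lie inside the union of earlier $E_\beta$'s) is not needed for the cardinality count since we are only bounding the union from above. The only mild point to note in writing it up is that the base case of the induction uses a different clause of the definition than the inductive step, so the proof should open by disposing of $k = 0$ separately before invoking the recursive clauses (0)--(2).
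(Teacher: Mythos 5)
Your proof is correct and is exactly the argument the paper has in mind; the paper's proof simply states that the case $k=0$ is definitional and the higher cases follow immediately by induction, which is the cardinality computation you spell out. No issues.
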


\begin{proof}
This is definitional for $k=0$ and immediate for all higher $k$ by induction.
\end{proof}

It's equally clear that any order-ideal of countable cofinality admits a $0$-twistable set; more particularly:

\begin{lemma}
\label{0exist}
Let $P\sbeq {}^{\omega_n} \omega $ be an $\leq^*$-ideal of countable cofinality; suppose also that $g:\omega_n\to\omega$ is not in $P$.
Then there exists a $0$-twistable $E \sbeq I(g)$ for $P$.
\end{lemma}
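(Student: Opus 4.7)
The plan is to diagonalize against a countable $\leq^\ast$-cofinal chain in $P$. First I would use the countable cofinality of $P$, together with its directedness as a $\leq^\ast$-ideal, to fix a $\leq^\ast$-increasing sequence $\langle f_k \mid k<\omega\rangle$ cofinal in $P$. Since $g \notin P$ and $P$ is downward closed, $g \not\leq^\ast f_k$ for every $k$; equivalently, the set $A_k := \{\alpha < \omega_n \mid g(\alpha) > f_k(\alpha)\}$ is infinite for every $k$. The $\leq^\ast$-increasing condition also supplies, for each $\ell \leq k$, a finite exceptional set $F_{\ell,k} \subseteq \omega_n$ outside of which $f_\ell \leq f_k$ pointwise.

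Next I would construct $E = \{(\alpha_k, j_k) \mid k<\omega\}$ by recursion on $k$. At stage $k$, the set $A_k$ is infinite while $\{\alpha_0, \dots, \alpha_{k-1}\} \cup \bigcup_{\ell<k} F_{\ell,k}$ is finite, so I may pick $\alpha_k \in A_k$ distinct from all earlier $\alpha_i$ and lying outside every $F_{\ell,k}$ with $\ell<k$. Setting $j_k := f_k(\alpha_k)+1$ yields $j_k \leq g(\alpha_k)$ by the choice of $\alpha_k \in A_k$, so $(\alpha_k, j_k) \in I(g)$, while the distinctness of the $\alpha_k$ forces $|E| = \aleph_0$.

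To finish, I would fix an arbitrary $f \in P$ and verify $|E \cap I(f)| < \aleph_0$ as follows. Choose $\ell<\omega$ with $f \leq^\ast f_\ell$; then $I(f) \setminus I(f_\ell)$ is finite, as it is supported on the finitely many coordinates at which $f(\alpha) > f_\ell(\alpha)$, each contributing at most $f(\alpha)+1$ pairs. It therefore suffices to see that $E \cap I(f_\ell)$ is finite, and this is immediate from the construction: for any $k > \ell$ one has $\alpha_k \notin F_{\ell,k}$, hence $f_\ell(\alpha_k) \leq f_k(\alpha_k) < j_k$, so $(\alpha_k, j_k) \notin I(f_\ell)$; thus $E \cap I(f_\ell) \subseteq \{(\alpha_0, j_0), \ldots, (\alpha_\ell, j_\ell)\}$. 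I expect no substantive obstacle here; the key coordination is to simultaneously exploit the $\leq^\ast$-chain structure (to dominate $f_\ell$ by $f_k$ off a finite set) and the failure of $g \leq^\ast f_k$ (to locate witnessing $\alpha_k$ at which the integer $j_k$ both fits under $g$ and escapes $f_k$).
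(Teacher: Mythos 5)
Your proposal is correct and follows essentially the same diagonalization as the paper: the paper fixes a countable $\leq^\ast$-cofinal family $\{f_k\}$ and simply picks $x_k \in I(g)\setminus\bigcup_{\ell\leq k} I(f_\ell)$, which is exactly what your choice of $(\alpha_k, f_k(\alpha_k)+1)$ with $\alpha_k\in A_k$ avoiding the exceptional sets $F_{\ell,k}$ accomplishes, just spelled out via an increasing chain. The verification that $E\cap I(f)$ is finite for $f\in P$ is the same in both arguments.
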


\begin{proof}
Fix a $\leq^\ast$-cofinal subset $\{f_n\mid n\in\omega\}$ of $P$ and choose an $x_n \in I(g) \setminus \bigcup_{k\leq n} I(f_k)$ for each $n$; clearly, $E = \{ x_n \mid n\in\omega\}$ is as desired.    
\end{proof}


The following lemma expresses the fact that the notion of twistable set only depends on the tail of a spine filtration.


\begin{lemma}

\label{lemmafinsegm}

Let $k \geq 0$ and suppose that $(P, Q_1)$ and $(P, Q_2)$ are two strong $(k+1)$-unbounded pairs such that there exists $f \in Q_1 \cap Q_2$ such that $\{ g \in Q_1 \mid g \geq f \} = \{ g \in Q_2 \mid g \geq f \}$. Then a set $E$ is $(k+1)$-twistable for $(P, Q_1)$ if and only if it is $(k+1)$-twistable for $(P, Q_2)$.

\end{lemma}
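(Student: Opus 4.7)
The strategy is to introduce the common tail $T := \{g \in Q_1 \mid g \geq f\} = \{g \in Q_2 \mid g \geq f\}$ and show that $(k+1)$-twistability of $E$ for $(P, Q_1)$, for $(P, Q_2)$, and for $(P, T)$ are all equivalent. Directedness of each $Q_i$ together with $f \in Q_i$ makes $T$ $\trianglelefteq$-cofinal in each $Q_i$, so $(P, T)$ is itself a strong $(k+1)$-unbounded pair and any spine of $(P, T)$ is simultaneously a spine of $(P, Q_1)$ and of $(P, Q_2)$. This gives the implications ``$(P, T)$ twistable $\Rightarrow$ $(P, Q_i)$ twistable'' for free: the same spine filtration and decomposition work. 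The task reduces to the converse, which I prove by induction on $k$.

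Given a spine filtration $(\langle P_\alpha \rangle_{\alpha < \omega_{k+1}}, R_1)$ and decomposition $\mathcal{E} = \langle E_\alpha \mid \alpha \in \{0\} \cup S_k^{k+1}\rangle$ witnessing $(k+1)$-twistability for $(P, Q_1)$, let $\gamma_0 := \min\{\alpha < \omega_{k+1} \mid R_1(\alpha) \geq f\}$, which exists since $R_1$ is $\trianglelefteq$-cofinal in $Q_1 \ni f$. I would produce a shifted spine filtration for $(P, T)$ by setting $\tilde P_\beta := P_{\gamma_0 + \beta}$ and $\tilde R(\beta) := R_1(\gamma_0 + \beta)$, together with a shifted decomposition obtained by absorbing the initial segment:
\[
\tilde E_0 := E_0 \cup \bigcup_{\alpha \in S_k^{k+1},\, \alpha \leq \gamma_0} E_\alpha, \qquad \tilde E_\beta := E_{\gamma_0 + \beta} \text{ for } \beta \in S_k^{k+1}.
\]
Routine checks confirm that $(\langle \tilde P_\beta \rangle, \tilde R)$ is a spine filtration for $(P, T)$ (the order-type, continuity, and cofinality conditions transfer from the original), and that conditions (0), (1), and (3) of the twistability definition all transfer: (0) combines $|\gamma_0| \leq \aleph_k$ with Lemma \ref{sizetwist} to bound $|\tilde E_0| \leq \aleph_k$; (1) is immediate since each $E_\alpha \subseteq I(R_1(\alpha)) \subseteq I(R_1(\gamma_0))$ for $\alpha \leq \gamma_0$; and (3) uses that $\gamma_0 + \beta$ is a limit point of $S_k^{k+1}$ precisely when $\beta$ is.

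The one condition demanding real work is (2), which requires $\tilde E_\beta = E_{\gamma_0 + \beta}$ to be $k$-twistable for the pair $(\tilde P_\beta, \tilde R \restriction \beta) = (P_{\gamma_0+\beta}, R_1 \restriction [\gamma_0, \gamma_0+\beta))$, while the hypothesis supplies $k$-twistability for the larger pair $(P_{\gamma_0+\beta}, R_1 \restriction (\gamma_0+\beta))$. Both are strong $k$-unbounded pairs, and they share the common tail $\{R_1(\alpha) \mid \gamma_0 \leq \alpha < \gamma_0+\beta\}$ above the element $R_1(\gamma_0)$, which is precisely the lemma's hypothesis at $k-1$: the inductive hypothesis then delivers the desired $k$-twistability. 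The base case $k = 0$ bypasses induction entirely, since $0$-twistability depends only on the ideal and not on any chain structure, so it is immediately inherited. The main obstacle, as expected, is condition (2); the whole shift construction is arranged precisely so that it reduces cleanly to the inductive hypothesis.
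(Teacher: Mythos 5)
Your proposal is correct and follows essentially the same route as the paper's proof: shift the given spine filtration past the first spine element dominating $f$, absorb the initial segment of the decomposition into the new $E_0$-part, verify conditions (0), (1), (3) directly, and reduce condition (2) to the inductive hypothesis for the two $k$-unbounded pairs sharing a final segment, with the $k=0$ case immediate since $0$-twistability depends only on the ideal. The only differences are cosmetic: you route the argument through the common tail $T$ (rather than building the shifted filtration directly for $(P,Q_2)$ and invoking symmetry), and you take the minimal shift point $\gamma_0$ and absorb $E_{\gamma_0}$ when needed, where the paper instead picks a successor index $\alpha_0$ to keep the stationary-set correspondence exact.
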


\begin{proof}

As the hypotheses are symmetrical we only show one implication. Also, the proof will be by induction, even though inductive reasoning is only required for one of the conditions to be checked. Call $P(k)$ the statement above relative to a fixed $k \geq 0$.

Let $(\langle P_\alpha \mid \alpha < \omega_{k+1} \rangle, Q')$ be a spine filtration of $(P, Q_1)$ for which $E$ is $(k+1)$-twistable, that is, it admits a decomposition $E = \bigcup \lbrace E_\alpha \mid \alpha \in \lbrace 0 \rbrace \cup S^{k+1}_k \rbrace$ as in the definition. There exists a successor ordinal $\alpha_0$ such that $Q'(\alpha_0) \geq f$ and hence thanks to our assumptions $Q' \restriction (\omega_{k+1} \setminus \alpha_0) \sbeq Q_2 $. So, once we reset the indexing of this final segment by letting $Q^\ast(\beta) = Q'(\alpha_0 + \beta)$ for all $\beta < \omega_{k+1}$ we obtain that  $(\langle P_{\alpha_0 + \beta} \rangle_{\beta < \omega_{k+1}}, Q^\ast )$ is a spine filtration for $(P, Q_2)$. As $\alpha_0$ is a successor, $(\alpha_0 + \beta) \in S_k^{k+1}$ if and only if $\beta \in S_k^{k+1}$. Hence we let $E'_0 = \bigcup \lbrace E_\alpha \mid \alpha \in \lbrace 0 \rbrace \cup (\alpha_0 \cap S_k^{k+1}) \rbrace  $ and $E'_{\beta} = E_{\alpha_0 + \beta}$. Clearly, $\vert E'_0 \vert \leq \aleph_k$ by an easy application of Lemma \ref{sizetwist}, and $E'_0  \sbeq I(Q'(\alpha_0)) = I(Q^\ast (0))$. Now we check conditions $(1)-(3)$. 

$(1)$ is satisfied since $E'_\beta = E_{\alpha_0 + \beta} \sbeq I(Q'(\alpha_0 + \beta)) = I(Q^\ast (\beta))$.

$(3)$ is satisfied since if $\beta = \sup(\beta \cap S_k^{k+1}) $ then $\alpha_0 + \beta = \sup( (\alpha_0 + \beta) \cap S_k^{k+1}) $, hence 
\[E'_\beta = E_{\alpha_0 + \beta} \sbeq \bigcup \lbrace E_\alpha \mid \alpha \in \lbrace 0 \rbrace \cup ((\alpha_0 + \beta) \cap S_k^{k+1}) \rbrace \]  
\[   = (E_0 \cup \bigcup_{\alpha \in \alpha_0 \cap S_k^{k+1}} E_\alpha ) \cup \bigcup_{\beta \in S_k^{k+1}} E_{\alpha_0 + \beta} = E'_0 \cup \bigcup_{\beta \in S_k^{k+1}} E'_\beta \]
Finally, $(2)$ is immediate if $k=0$ and follows easily from $P(k-1)$ otherwise. In fact, what we need to prove is that $E'_\beta = E_{\alpha_0 + \beta} $ is $k$-twistable for  $(P_{\alpha_0 + \beta} , Q^\ast \restriction \beta) = (P_{\alpha+ \beta}, Q' \restriction (\alpha_0 + \beta \setminus \alpha_0) )$, and we know that it is for the $k$-unbounded pair $(P_{\alpha_0 + \beta}, Q' \restriction \alpha_0 + \beta)$ of which that other chain is a final segment. So we conclude by $P(k-1)$.

Finally, notice that $E=  \bigcup \lbrace E'_\beta \mid \lbrace 0 \rbrace \cup \beta \in S_k^{k+1} \rbrace$.
\end{proof}

With the next few lemmas, we show by induction on $k$ that the notion of $k$-twistable set is \textit{continuous}, in the sense that when taking the increasing union of certain sequences of unbounded pairs, the union of the corresponding twistable sets contains a twistable set for that increasing union. The proofs of both the base case and the induction step are based on some form of diagonalization. In the remainder of this section, we work in ${}^{\omega_n} \omega$ for some fixed $n$.

\begin{lemma}

\label{0cont}

Let $P= \bigcup_{i < \omega} P_i$ be an increasing union of ideals of countable cofinality, each with its $0$-twistable set $X_i$. Then $X= \bigcup_{i < \omega} X_i$ contains a $0$-twistable set for $P$.

\end{lemma}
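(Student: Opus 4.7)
The plan is a standard diagonal construction exploiting two structural facts at hand: since $P$ is a countable increasing union of ideals of countable $\leq^\ast$-cofinality it itself has countable $\leq^\ast$-cofinality, and by hypothesis each $X_i$ meets $I(f)$ in a finite set whenever $f \in P_i$. First I fix a $\leq^\ast$-cofinal sequence $\langle h_k \mid k<\omega\rangle$ in $P$, and for each $k$ choose an index $i_k$ large enough that $h_0,\dots,h_k \in P_{i_k}$; such an $i_k$ exists because the $P_i$'s are increasing and exhaust $P$.

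Next I recursively construct $\langle x_k \mid k<\omega\rangle \subseteq X$ by selecting, at stage $k$, some $x_k \in X_{i_k}$ subject to the stipulations $x_k \notin I(h_0)\cup\cdots\cup I(h_k)$ and $x_k \neq x_j$ for all $j<k$. The $0$-twistability of $X_{i_k}$ for $P_{i_k}$ ensures that $X_{i_k} \cap \bigl(I(h_0)\cup\cdots\cup I(h_k)\bigr)$ is finite, while $X_{i_k}$ itself is infinite, so the finitely many stage-$k$ prohibitions rule out only finitely many candidates and the recursion proceeds.

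Finally I set $X' = \{x_k \mid k<\omega\}$, a countably infinite subset of $X$. To verify $0$-twistability for $P$, fix any $f \in P$ and choose $k$ with $f \leq^\ast h_k$, so that $I(f)\setminus I(h_k)$ is finite. For every $j \geq k$ the construction yields $x_j \notin I(h_k)$, whence $x_j \in I(f)$ forces $x_j \in I(f)\setminus I(h_k)$; only finitely many $j \geq k$ can therefore satisfy this, and together with the finitely many $x_j$ for $j<k$, this gives $|X' \cap I(f)| < \aleph_0$, as required.

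I do not anticipate a real conceptual obstacle: the only care needed is in choosing each $i_k$ so that every $h_j$ with $j \leq k$ already lies in $P_{i_k}$, which is precisely what licenses the invocation of $0$-twistability at that stage. The argument is otherwise essentially the classical diagonalization that patches finite-to-one functions along a countable increasing union of ideals.
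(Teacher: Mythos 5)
Your proof is correct and follows essentially the same diagonalization as the paper's: fix a countable $\leq^\ast$-cofinal family in $P$, and at stage $k$ pick a fresh point of $X_{i_k}$ outside the (finite) intersection with the relevant \emph{sottografici}, which is licensed exactly because $h_0,\dots,h_k$ already lie in $P_{i_k}$. The only cosmetic difference is that the paper first assembles a single $\leq$-increasing diagonal chain cofinal in $P$ and avoids $I(f_{i,j(i)})$ at stage $i$, whereas you avoid the finite union $I(h_0)\cup\cdots\cup I(h_k)$ directly; the two devices are interchangeable.
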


\begin{proof}

Note that since each $P_i$ is $\leq$-directed, for each we have a $\leq^\ast$-cofinal, $\leq$-ordered chain $\langle f_{i, j} \mid j < \omega \rangle$. Using the fact that $P_i \subseteq P_{i'}$ for $i < i'$, we can iteratively find $j(i)$ such that:

\begin{enumerate}
    \item $f_{i, j(i)} \leq f_{i+1, j(i+1)} $

    \item $ f_{l,m} \leq f_{i+1, j(i+1)} $ for all $l,m \leq i$.
\end{enumerate}

Then we consider the diagonal chain $\langle f_{i, j(i)} \mid i < \omega \rangle$ and observe that it is cofinal in $P$. For each $i < \omega$ pick $x_i \in X_i \setminus I(f_{i, j(i)})$. Then $\lB x_i \rB_{i < \omega} $ is almost disjoint from all the $I(f_{i, j})$ and hence from all the \textit{sottografici} of functions in $P$.     
\end{proof}

For $k \geq 0$ we define the statement $R(k)$, which we also call $k$-continuity, as follows. $R(k)$ holds if and only if whenever 

    \begin{enumerate}

    \item $(P,Q)$ is a strong $(k+1)$-unbounded pair. 
    \item$(\lang P_\xi \mid \xi < \omega_{k+1} \rang , Q') $ is a spine filtration of it,
    \item $\zeta$ is a limit point of $S^{k+1}_k$,
    
    \item $(\xi_\alpha)_{\alpha < \omega_k} \sbeq \zeta \cap S_k^{k+1}$ is increasing and cofinal in $\zeta$,
    
    \item For all $\alpha < \omega_k$, there exist a $k$-twistable $E_{\xi_\alpha} \sbeq I(Q'(\xi_\alpha))$ for $(P_{\xi_\alpha}, Q' \restriction \xi_\alpha)$,

    \end{enumerate}

there exists $E_\zeta \sbeq  \bigcup_{\alpha< \omega_k} E_{\xi_\alpha}  \sbeq I(Q'(\zeta))$ that is $k$-twistable for the unbounded pair $(P_\zeta, Q'\restriction \zeta).$

\begin{lemma}
\label{contind}

$R(k)$ implies $R(k+1)$. 
    
\end{lemma}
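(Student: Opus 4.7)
Assume $R(k)$ and the setup of $R(k+1)$. The plan is to choose a $(k+1)$-spine filtration of $(P_\zeta,Q'\restriction\zeta)$ and build a $(k+1)$-twistable set $E_\zeta\subseteq\bigcup_\alpha E_{\xi_\alpha}$ level by level, combining $k$-twistable pieces via $R(k)$ at limit levels while borrowing pieces from the given $E_{\xi_\alpha}$ at non-limit levels.

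Concretely, I will first fix, for each $\alpha<\omega_{k+1}$, a spine filtration $\langle P^{(\alpha)}_\beta\mid\beta<\omega_{k+1}\rangle$ with spine $Q^{(\alpha)}$ of $(P_{\xi_\alpha},Q'\restriction\xi_\alpha)$, together with the decomposition $E_{\xi_\alpha}=F^{(\alpha)}_0\cup\bigcup_{\beta\in S_k^{k+1}}F^{(\alpha)}_\beta$ witnessing that $E_{\xi_\alpha}$ is $(k+1)$-twistable. Next, by Lemma \ref{increasingunion}, I choose a spine filtration $\langle P^\zeta_\eta\mid\eta<\omega_{k+1}\rangle$ with spine $R^\zeta$ of $(P_\zeta,Q'\restriction\zeta)$. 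Since every $P^\zeta_\eta$ has $\leq^\ast$-cofinality strictly below $\omega_{k+1}$ and $P_\zeta=\bigcup_\alpha P_{\xi_\alpha}$, I can select monotone continuous maps $\alpha,\beta\colon\omega_{k+1}\to\omega_{k+1}$ with $P^\zeta_\eta\subseteq P^{(\alpha(\eta))}_{\beta(\eta)}$ for every $\eta$, and with $\alpha(\eta),\beta(\eta)\in S_k^{k+1}$ whenever $\eta\in S_k^{k+1}$.

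I then define $E^\zeta_\eta$ by recursion on $\eta\in\{0\}\cup S_k^{k+1}$. If $\eta$ is not a limit point of $S_k^{k+1}$ (including $\eta=0$), I set $E^\zeta_\eta=F^{(\alpha(\eta))}_{\beta(\eta)}\cap I(R^\zeta(\eta))$. If $\eta$ is a limit point of $S_k^{k+1}$, I apply $R(k)$ to the $(k+1)$-unbounded pair $(P_\zeta,Q'\restriction\zeta)$ with its spine filtration, the limit point $\eta$, any cofinal sequence in $\eta\cap S_k^{k+1}$ of ordertype $\omega_k$, and the previously defined $k$-twistable sets $E^\zeta_{\eta'}$; this yields a $k$-twistable $E^\zeta_\eta\subseteq\bigcup_{\eta'\in\eta\cap S_k^{k+1}}E^\zeta_{\eta'}$ for $(P^\zeta_\eta,R^\zeta\restriction\eta)$. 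Finally $E_\zeta:=\bigcup_\eta E^\zeta_\eta$; the containment $E_\zeta\subseteq\bigcup_\alpha E_{\xi_\alpha}$ is immediate at non-limit levels and inductive at the limit ones.

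The main obstacle is the verification, in the non-limit case, that $F^{(\alpha(\eta))}_{\beta(\eta)}\cap I(R^\zeta(\eta))$ is in fact $k$-twistable for the sub-pair $(P^\zeta_\eta,R^\zeta\restriction\eta)$, given only that $F^{(\alpha(\eta))}_{\beta(\eta)}$ is $k$-twistable for the ambient pair $(P^{(\alpha(\eta))}_{\beta(\eta)},Q^{(\alpha(\eta))}\restriction\beta(\eta))$. For $k=0$ this is immediate, since shrinking $P$ can only relax the requirement that the intersections $E\cap I(f)$ be finite. For higher $k$ one needs an auxiliary transfer lemma asserting that $k$-twistability is preserved under passage to suitable sub-pairs --- a statement proved by a secondary induction on $k$ in the spirit of Lemma \ref{lemmafinsegm}, re-indexing the inner spine filtration at the threshold $\beta(\eta)$ and absorbing its initial segment into the zeroth block of the decomposition. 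Modulo this transfer principle, the remainder is careful but mechanical bookkeeping.
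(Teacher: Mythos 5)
Your proposal has the right general shape (borrow pieces at non-limit stages, apply $R(k)$ at limit points of $S_k^{k+1}$), but it leaves unproved exactly the step where the real difficulty lives. You fix an \emph{arbitrary} spine filtration $\langle P^\zeta_\eta\rangle$ with spine $R^\zeta$ of $(P_\zeta, Q'\restriction\zeta)$ and then need $F^{(\alpha(\eta))}_{\beta(\eta)}\cap I(R^\zeta(\eta))$ to be $k$-twistable for the sub-pair $(P^\zeta_\eta, R^\zeta\restriction\eta)$, knowing only that $F^{(\alpha(\eta))}_{\beta(\eta)}$ is $k$-twistable for the ambient pair $(P^{(\alpha(\eta))}_{\beta(\eta)}, Q^{(\alpha(\eta))}\restriction\beta(\eta))$ with $P^\zeta_\eta\subseteq P^{(\alpha(\eta))}_{\beta(\eta)}$. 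Two things go wrong. First, even for $k=0$ the truncation by $I(R^\zeta(\eta))$ is not harmless: $0$-twistability requires the piece to stay \emph{infinite}, and nothing in your construction relates $R^\zeta(\eta)$ to the inner spine element $Q^{(\alpha(\eta))}(\beta(\eta))$, so the intersection could well be finite. Second, and more seriously, the ``transfer lemma'' you invoke for $k>0$ --- that $k$-twistability passes from $(P,Q)$ to a sub-pair $(P',Q'')$ with $P'\subseteq P$ and a \emph{different} chain $Q''$ --- is not in the paper and is not ``in the spirit of Lemma \ref{lemmafinsegm}'': that lemma keeps $P$ fixed and only swaps between chains sharing a final segment. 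For a genuinely different chain, the requirements $E_\gamma\subseteq I(Q''(\gamma))$ and $k$-twistability of the pieces with respect to initial segments of $Q''$ and to the ideals of a spine filtration of the sub-pair are not inherited in any obvious way; proving such a transfer principle is essentially as hard as the lemma itself, and in the generality you state it there is no reason to believe it.

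The paper's proof avoids this precisely by \emph{not} taking an arbitrary filtration of $(P_\zeta, Q'\restriction\zeta)$: it builds a bespoke one by interleaving the given filtrations of the $P_{\xi_\alpha}$, using a recursively defined $F:\omega_{k+1}\to\omega_{k+1}$ and Euclidean division by $\omega_k$, so that each length-$\omega_k$ block of the new filtration is a literal segment of a single inner filtration. At the stages where a borrowed piece $E''_\alpha=E_{\xi_{q_k(\alpha)-1},F(q_k(\alpha)-1)+\omega_k}$ is used, the new ideal $P''_\alpha$ is \emph{equal} to $P_{\xi_{q_k(\alpha)-1},F(q_k(\alpha)-1)+\omega_k}$ (not merely contained in it) and the new chain $Q''_\zeta\restriction\alpha$ shares a final segment with the corresponding inner chain, so only Lemma \ref{lemmafinsegm} is needed; and the containments $E''_\alpha\subseteq I(Q''_\zeta(\alpha))$ come for free from the strong chain property (comparability under $\leq^*$ plus non-membership in the relevant ideal yields the $\leq$-inequality), with no truncation of the pieces at all. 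To repair your argument you would have to either prove the sub-pair transfer principle you assume, or redesign your filtration so that the ambient and new data coincide on the relevant blocks --- which is exactly the paper's construction.
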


\begin{proof}

Throughout the proof, let $q_k(\alpha)$ and $r_k(\alpha)$ denote the quotient and the remainder of $\alpha$ in the Euclidean division by $\omega_k$, respectively.

We now assume $R(k)$ and the hypotheses of $R(k+1)$ to prove the conclusion of $R(k+1)$. 

Hypothesis $R(k+1).(5)$ gives us, for each index $\alpha < \omega_{k+1}$, a spine filtration $(\lang P_{\xi_\alpha, \beta} \rang_{\beta < \omega_{k+1}}, Q''_{\xi_\alpha}) $ and $k$-twistable sets $E_{\xi_\alpha, \beta} $ for all $\beta \in S_k^{k+1}$, such that $E_{\xi_\alpha} =  \bigcup \langle E_{\xi_\alpha, \beta} \mid \beta \in \{ 0 \} \cup S_k^{k+1} \rangle$. 

We recursively define a function $F: \omega_{k+1} \to \omega_{k+1} $ such that 
\[  P_{\xi_\alpha, F(\alpha)} \supseteq \bigcup_{\gamma < \alpha} P_{\xi_\gamma, F(\gamma)+ \omega_k +1 } \cup \bigcup_{\epsilon, \eta < \alpha} P_{\xi_\epsilon, \eta}.               \]
This is possible because the right-hand side is a union of at most $ \aleph_k$-many ideals of cofinality at most $ \aleph_k$, and is included into $P_{\xi_\alpha}$.

Then we let
\[ P'_\gamma \defeq P_{\xi_{q_k(\gamma)}, F(q_k(\gamma)) +r_k(\gamma) +1},\]
and finally
\[  P''_\alpha \defeq \bigcup_{\gamma < \alpha} P'_\gamma.  \]
As for the chain, we let
\[  Q''_\zeta ( \alpha ) = Q''_{\xi_{q_k(\alpha)}} (F(q_k(\alpha)) + r_k(\alpha)).     \]

\begin{claim}
\label{claim_filt}
$(\lang P''_\alpha \mid \alpha < \omega_{k+1} \rang, Q''_\zeta) $ is a spine filtration for $(P_\zeta, Q' \restriction \zeta)$.
\end{claim}

\begin{proof}[Proof of Claim \ref{claim_filt}]
The condition satisfied by $F$ implies that $P'_\gamma$ is increasing and hence $P''_\alpha$ is increasing and continuous by construction. Again by the property that defines $F$, we have $\bigcup_{\alpha < \omega_{k+1}} P''_\alpha = P_\zeta$. Finally, for each $\alpha < \omega_{k+1} $, $\mathrm{cf}(P'_\alpha) \leq \aleph_k$, and hence $\mathrm{cf}(P''_\alpha)$ as the latter is the union of at most $\aleph_k$-many ideals of cofinality less or equal than $ \aleph_k$.

As for the chain, we have the inclusions $Q''_\zeta \sbeq \bigcup_{\alpha< \omega_{k+1}} Q''_{\xi_\alpha} \sbeq Q' \restriction \zeta$, following from $Q''_{\xi_\alpha} \sbeq Q' \restriction \xi_\alpha \sbeq Q' \restriction \zeta$. It remains to prove that, for every $\alpha < \omega_{k+1}$, $Q''_\zeta \restriction \alpha = P''_\alpha \cap Q''_\zeta $, or equivalently that $P''_\alpha \not \ni Q''_\zeta(\alpha) \in P''_{\alpha+1}$. This holds because for every $\gamma < \alpha $, we have

\[ Q''_\zeta(\alpha) = Q''_{\xi_{q_k(\alpha)}} (F(q_k(\alpha)) +r_k(\alpha)) \not \in P'_\gamma = P_{\xi_{q_k(\gamma)}, F(q_k(\gamma)) + r_k(\gamma)+1}.\] In fact, if $q_k(\gamma) < q_k(\alpha)$,then, by construction of $F$, 

\begin{align*}
Q''_\zeta (\alpha) 
&= Q''_{\xi_{q_k(\alpha)}} \left( F(q_k(\alpha)) + r_k(\alpha) \right) \\
&\not\in P_{\xi_{q_k(\alpha)}, F(q_k(\alpha)) + r_k(\alpha)} \\
&\supseteq P_{\xi_{q_k(\alpha)}, F(q_k(\alpha))} \\
&\supseteq P_{\xi_{q_k(\gamma)}, F(q_k(\gamma)) + \omega_k + 1} \\
&\supseteq P_{\xi_{q_k(\gamma)}, F(q_k(\gamma)) + r_k(\gamma) + 1}.
\end{align*}

If $q_k(\gamma) = q_k(\alpha)$ and $r_k(\gamma) < r_k(\alpha)$ then $r_k(\gamma) +1 \leq r_k(\alpha)$. Hence, 
\[ P_{\xi_{q_k(\alpha)}, F(q_k(\alpha)) +r_k(\alpha)} \supseteq P_{\xi_{q_k(\gamma)}, F(q_k(\gamma)) + r_k(\gamma)+1},\] 
and we can apply the same reasoning as above.

Moreover,
\[  Q''_\zeta (\alpha) = Q''_{\xi_{q_k(\alpha)}} (F(q_k(\alpha)) +r_k(\alpha)) \in  P_{\xi_{q_k(\alpha)}, F(q_k(\alpha)) +r_k(\alpha) +1} = P'_\alpha = P''_{\alpha+1}.    \]
This ends the proof of the claim. \end{proof}

We are finally ready to define the $E_\zeta$ witnessing $R(k+1)$. This is 

\[  E_\zeta = \bigcup_{\alpha < \omega_{k+1}} E_{\xi_{q_k(\alpha)}, F(q_k(\alpha)) + \omega_k }. \]

We now define a decomposition $E_\zeta = \bigcup \langle E''_\alpha \mid \alpha \in \{ 0  \} \cup S_k^{k+1} \rangle$ as in the definition.  We let $E''_0 = \emptyset$, and for every $\alpha \in S_k^{k+1}$ we find a set $E''_\alpha$ such that

\begin{enumerate}
    \item $E''_\alpha \sbeq I(Q''_\zeta(\alpha))$,
    \item $E''_\alpha$ is $k$-twistable for $(P''_\alpha, Q''_\zeta \restriction \alpha )$, 
    \item if $\alpha$ is a limit point of $S^{k+1}_k$, then $E''_\alpha \sbeq \bigcup_{\gamma \in \alpha \cap S_k^{k+1}} E''_\gamma$,
\end{enumerate}

    and finally 
    \[  \bigcup_{\alpha\in S_k^{k+1}} E''_\alpha = E_\zeta. \] 

If $q_k(\alpha)$ is a successor we let
\[ E''_\alpha = E_{\xi_{q_k(\alpha)-1}, F(q_k(\alpha)-1) + \omega_k     } \sbeq I(Q''_{\xi_{q_k(\alpha)-1}} (F(q_k(\alpha)-1) + \omega_k )  ) \sbeq I(Q''_\zeta (\alpha)),  \]   
where the last inclusion follows from the fact that 
\begin{align*}
Q''_\zeta (\alpha) 
&= Q''_{\xi_{q_k(\alpha)}} \left( F(q_k(\alpha)) + r_k(\alpha) \right) \\
&\notin P_{\xi_{q_k(\alpha)},\, F(q_k(\alpha)) + r_k(\alpha)} \\
&\supseteq P_{\xi_{q_k(\alpha)},\, F(q_k(\alpha))} \\
&\supseteq P_{\xi_{q_k(\alpha) - 1},\, F(q_k(\alpha) - 1) + \omega_k + 1} \\
&\ni Q''_{\xi_{q_k(\alpha) - 1}} \left( F(q_k(\alpha) - 1) + \omega_k \right),
\end{align*}
which implies $  Q''_\zeta (\alpha) = Q''_{\xi_{q_k(\alpha)}} (F(q_k(\alpha)) + r_k(\alpha))   \not \leq Q''_{\xi_{q_k(\alpha)-1}}  (F(q_k(\alpha)-1) + \omega_k)  $ and hence that  $Q''_{\xi_{q_k(\alpha)-1}}  (F(q_k(\alpha)-1) + \omega_k ) \leq Q''_{\xi_{q_k(\alpha)}} (F(q_k(\alpha)) + r_k(\alpha)) $ since we are dealing with a strong chain.

That $E''_\alpha = E_{\xi_{q_k(\alpha)-1}, F(q_k(\alpha)-1) + \omega_k}$ is $k$-twistable for $(P''_\alpha, Q''_\zeta \restriction \alpha) $ follows from the fact that 
\begin{align*}
P''_\alpha 
&= \bigcup_{\gamma < \alpha} 
    P_{\xi_{q_k(\gamma)},\, F(q_k(\gamma)) + r_k(\gamma) + 1} \\
&= \bigcup_{\beta < \omega_k} 
    P_{\xi_{q_k(\alpha)-1},\, F(q_k(\alpha)-1) + \beta + 1} \\
&= P_{\xi_{q_k(\alpha)-1},\, F(q_k(\alpha)-1) + \omega_k}
\end{align*}
(which is sufficient for the $k=0$ case) and the fact that
\[  Q''_\zeta \restriction (\alpha \setminus q_k(\alpha)-1) = Q''_{\xi_{q_k(\alpha) -1}} \restriction [F(q_k(\alpha)-1), F(q_k(\alpha)-1) + \omega_k),  \]
hence $Q''_\zeta \restriction \alpha$ and $Q''_{\xi_{q_k(\alpha)-1}} \restriction (F(q_k(\alpha)-1) + \omega_k) $ share a final segment and since $E''_\alpha =E_{\xi_{q_k(\alpha)-1}, F(q_k(\alpha)-1) + \omega_k}$ is $k$-twistable for the unbounded pair 
\[( P_{\xi_{q_k(\alpha)-1}, F(q_k(\alpha)-1) + \omega_k}    , Q''_{\xi_{q_k(\alpha)-1}} \restriction (F(q_k(\alpha)-1) + \omega_k)     )\] 
by assumption, then it is for $(P''_\alpha, Q''_\zeta \restriction \alpha)$, thanks to Lemma \ref{lemmafinsegm} in case $k >0$.  

If  $\textrm{cf}(q_k(\alpha)) = \omega_k$ instead, that is, $\alpha = \sup(\alpha \cap S_k^{k+1})$, then we can find an increasing sequence $(\rho_\delta)_{\delta < \omega_k} \sbeq \alpha \cap S_k^{k+1}$ cofinal in $\alpha$ such that $q_k(\rho_\delta)$ is a successor for all $\delta < \omega_k$. Then we can apply $R(k)$ and find a $k$-twistable
\[E''_\alpha \sbeq \bigcup_{\delta < \omega_k} E''_{\rho_\delta} \sbeq  \bigcup_{\gamma \in \alpha \cap S_k^{k+1}} E''_\gamma.  \]
Notice that 
\[ E''_\alpha \sbeq  \bigcup_{\delta < \omega_k} E''_{\rho_\delta}    \sbeq I(Q''_\zeta (\alpha)).          \]
In fact, for all $\delta < \omega_k$,
\[  E''_{\rho_\delta} \subseteq I(Q''_{\xi_{q_k(\rho_\delta) -1}} (F(q_k(\rho_\delta) -1) + \omega_k)) \in P_{\xi_{q_k(\rho_\delta) -1}, F(q_k(\rho_\delta) -1) + \omega_k+1}                              \]
and 
\begin{align*}
P_{\xi_{q_k(\rho_\delta) -1},\, F(q_k(\rho_\delta) -1) + \omega_k + 1} 
&\subseteq P_{q_k(\alpha),\, F(q_k(\alpha))} \\
&\not\ni Q''_{q_k(\alpha)} \left( F(q_k(\alpha)) \right) \\
&= Q''_{q_k(\alpha)} \left( F(q_k(\alpha)) + r_k(\alpha) \right) \\
&= Q''_\zeta (\alpha),
\end{align*}
hence $I(Q''_{\xi_{q_k(\rho_\delta) -1}} (F(q_k(\rho_\delta) -1) )+ \omega_k) \sbeq I(Q''_\zeta (\alpha))$, since we are dealing with a strong chain.

Finally, $E_\zeta \sbeq I(Q'(\zeta))$ because the chain is strong and $E''_\alpha \sbeq I(Q''_\zeta(\alpha)) \sbeq I(Q'(\zeta))$ for all $\alpha \in S_k^{k+1}$.
\end{proof}

\begin{lemma}
\label{continuity}
For all $k\geq 0$, $R(k)$ holds.

\end{lemma}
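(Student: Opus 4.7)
The plan is to prove this by induction on $k$. The inductive step $R(k) \Rightarrow R(k+1)$ has already been established in Lemma \ref{contind}, so the only remaining task is to establish the base case $R(0)$.

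To prove $R(0)$, I unwind the definitions for $k=0$: we have a strong $1$-unbounded pair $(P,Q)$ with spine filtration $(\langle P_\xi \mid \xi < \omega_1\rangle, Q')$, a limit point $\zeta$ of $S_0^1$, an increasing sequence $\langle \xi_\alpha \mid \alpha < \omega\rangle$ cofinal in $\zeta$, and $0$-twistable sets $E_{\xi_\alpha} \subseteq I(Q'(\xi_\alpha))$ for each $(P_{\xi_\alpha}, Q' \restriction \xi_\alpha)$. By continuity of the filtration, $P_\zeta = \bigcup_{\alpha<\omega} P_{\xi_\alpha}$ is an increasing union of ideals of countable cofinality, so Lemma \ref{0cont} applies directly to the sequence $\langle P_{\xi_\alpha} \rangle_{\alpha < \omega}$ equipped with its $0$-twistable witnesses, producing a $0$-twistable set $E_\zeta \subseteq \bigcup_{\alpha < \omega} E_{\xi_\alpha}$ for $P_\zeta$.

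The only bookkeeping item beyond Lemma \ref{0cont} is the required inclusion $E_\zeta \subseteq I(Q'(\zeta))$. This is where the hypothesis that $(P,Q)$ is a \emph{strong} unbounded pair enters: along $Q'$ the orders $\leq$ and $\leq^\ast$ coincide, so $Q'(\xi_\alpha) \leq Q'(\zeta)$ for every $\alpha < \omega$, whence $I(Q'(\xi_\alpha)) \subseteq I(Q'(\zeta))$ and therefore $\bigcup_\alpha E_{\xi_\alpha} \subseteq I(Q'(\zeta))$, giving the inclusion for $E_\zeta$.

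I do not anticipate any obstacle here. All substantive content is already encapsulated in the two preceding lemmas: Lemma \ref{0cont} provides the countable diagonalization that gives the base case, while Lemma \ref{contind} provides the genuinely delicate argument at successor stages (the reindexing through $q_k$ and $r_k$ and the invocation of $R(k)$ at limit stages $\alpha$ of uncountable cofinality). The present lemma is simply the combination of these two facts, with the strong-chain hypothesis supplying the one-line verification that the resulting twistable set lies inside $I(Q'(\zeta))$.
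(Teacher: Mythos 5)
Your proposal is correct and follows essentially the same route as the paper: the base case $R(0)$ is obtained by applying Lemma \ref{0cont} to the increasing union $P_\zeta=\bigcup_{\alpha<\omega}P_{\xi_\alpha}$, with the strong-chain hypothesis giving $\bigcup_\alpha E_{\xi_\alpha}\subseteq\bigcup_\alpha I(Q'(\xi_\alpha))\subseteq I(Q'(\zeta))$, and the inductive step is exactly Lemma \ref{contind}. Your write-up just makes the (short) base-case verification a bit more explicit than the paper does.
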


\begin{proof}

Lemma \ref{0cont} easily implies the base case $R(0)$ since we are dealing with a strong chain and hence we find a $0$-twistable $E_\zeta \sbeq \bigcup_{i < \omega} E_{\xi_i} \sbeq \bigcup_{i < \omega} I(Q'(\xi_i)) \sbeq I(Q'(\zeta))$. Lemma $\ref{contind}$ gives us the induction step. We conclude by induction.
\end{proof}

Finally, by continuity, we conclude the existence of a twistable set for any unbounded pair.

\begin{lemma}

\label{exitwist}

For every $k \geq 0$ and for every $(k{+}1)$-spine filtration 
\[
\left( \left\langle P_\alpha \mid \alpha < \omega_{k+1} \right\rangle,\, Q' \right)
\]
of a strong $(k{+}1)$-unbounded pair $(P, Q)$, there exists a $(k{+}1)$-twistable set 
\[
E \subseteq \bigcup_{f \in Q} I(f).
\]
\end{lemma}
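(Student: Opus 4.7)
I would prove this by induction on $k \geq 0$, constructing the pieces $E_\alpha$ of the decomposition by recursion on $\alpha \in \{0\} \cup S_k^{k+1}$. The case $\alpha = 0$ is handled by setting $E_0 = \emptyset$ (or any subset of $I(Q'(0))$ of allowable size). For the remaining $\alpha$, one separately handles ordinals that are limit points of $S_k^{k+1}$ from those that are not. At non-limit-points, a direct construction produces $E_\alpha$; at limit points, the continuity machinery of Section \ref{sect:twistable} diagonalizes the already-built pieces.

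\textbf{Base case $k=0$.} At an $\alpha \in S_0^1$ that is not a limit point of $S_0^1$, the spine filtration yields $\mathrm{cf}(P_\alpha) \leq \aleph_0$ and $Q'(\alpha) \notin P_\alpha$, so Lemma \ref{0exist} delivers a $0$-twistable $E_\alpha \subseteq I(Q'(\alpha))$ for $(P_\alpha, Q'\restriction \alpha)$. At a limit point $\alpha$ of $S_0^1$, fix an $\omega$-cofinal sequence $(\xi_i)_{i<\omega}$ in $\alpha \cap S_0^1$; by continuity of the filtration, $P_\alpha = \bigcup_i P_{\xi_i}$, and Lemma \ref{0cont} (equivalently, $R(0)$) produces a $0$-twistable $E_\alpha \subseteq \bigcup_i E_{\xi_i}$ for $(P_\alpha, Q'\restriction \alpha)$. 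Strongness of $Q$ ensures $E_{\xi_i} \subseteq I(Q'(\xi_i)) \subseteq I(Q'(\alpha))$, and clause (3) of the definition is immediate.

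\textbf{Inductive step.} Assume the lemma for $k$ and consider a $(k{+}2)$-spine filtration of a strong $(k{+}2)$-unbounded pair. For $\alpha \in S_{k+1}^{k+2}$ not a limit point of $S_{k+1}^{k+2}$, the pair $(P_\alpha, Q'\restriction \alpha)$ is itself a strong $(k{+}1)$-unbounded pair: $Q'\restriction\alpha$ has ordertype $\alpha$ and cofinality $\aleph_{k+1}$, $\mathrm{cf}(P_\alpha) \leq \aleph_{k+1}$, and strongness is inherited from the ambient chain. Lemma \ref{increasingunion} furnishes a $(k{+}1)$-spine filtration of $(P_\alpha, Q'\restriction \alpha)$, and the induction hypothesis produces a $(k{+}1)$-twistable $E_\alpha \subseteq \bigcup_{f \in Q'\restriction \alpha} I(f) \subseteq I(Q'(\alpha))$ for that pair, again using strongness for the final inclusion. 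At a limit point $\alpha$ of $S_{k+1}^{k+2}$, pick an $\omega_{k+1}$-cofinal sequence $(\xi_\delta)_{\delta<\omega_{k+1}}$ in $\alpha \cap S_{k+1}^{k+2}$ and apply $R(k{+}1)$ (Lemma \ref{continuity}) to the previously constructed $(k{+}1)$-twistable sets $E_{\xi_\delta}$, obtaining a $(k{+}1)$-twistable $E_\alpha \subseteq \bigcup_\delta E_{\xi_\delta}$ for $(P_\alpha, Q'\restriction \alpha)$; clauses (1) and (3) then follow from strongness as in the base case. Taking $E := \bigcup_{\alpha \in \{0\} \cup S_{k+1}^{k+2}} E_\alpha$ gives the desired $(k{+}2)$-twistable set.

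\textbf{Main obstacle.} The proof is essentially a bookkeeping recursion that stitches together Lemmas \ref{0exist}, \ref{0cont}, \ref{increasingunion}, and \ref{continuity}; once these are in hand, no new ideas are needed. The only delicate check is at the non-limit-point stages of the inductive step, where one must verify that $(P_\alpha, Q'\restriction \alpha)$ genuinely satisfies all the hypotheses of a strong $(k{+}1)$-unbounded pair so that the induction hypothesis applies; strongness of the sub-chain and the cofinality data built into the definition of spine filtration do exactly this work.
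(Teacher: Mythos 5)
Your proposal is correct and follows essentially the same route as the paper's proof: induction on $k$, with a recursion over $\{0\}\cup S_k^{k+1}$ handling non-limit points via Lemma \ref{0exist} or the induction hypothesis and limit points via the continuity statement $R(k)$ of Lemma \ref{continuity}, then taking the union. The only (harmless) differences are that you explicitly invoke Lemma \ref{increasingunion} to supply the spine filtration needed for the inductive application (which the paper leaves implicit), and you allow an arbitrary cofinal sequence in $\alpha\cap S_k^{k+1}$ at limit points where the paper selects one consisting of non-limit points.
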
    

\begin{proof}

We prove the result by induction. Let $(\lang P_\alpha \mid \alpha < \omega_{k+1} \rang , Q')$ be our spine filtration. If $\alpha \in S_k^{k+1}$ and $\alpha > \sup(\alpha \cap S_k^{k+1}) $ then $(P_\alpha, Q' \restriction \alpha)$ is a strong $k$-unbounded pair. If $k=0$ then use Lemma \ref{0exist} to find a $0$-twistable set $E_\alpha \sbeq I(Q'(\alpha))$. If $k>1$ then, by the induction assumption, there exists a $k$-twistable $E \sbeq \bigcup_{f \in Q' \restriction \alpha } I(f) \sbeq I(Q'(\alpha)) $, where the last inclusion follows from the fact that $Q'$ is a strong chain. In both cases we find $E_\alpha \sbeq \bigcup_{\gamma \in \alpha \cap S_k^{k+1}} E_\gamma $ for the case of $\alpha \in S_k^{k+1}$ with $\alpha = \sup(\alpha \cap S_k^{k+1}) $ as follows. 

First, we find $(\rho_\delta)_{\delta < \omega_k} \sbeq \alpha \cap S_k^{k+1}$ such that $\rho_\delta > \sup(\rho_\delta \cap S_k^{k+1})$ for all $\delta < \omega_k$. Then we apply continuity, that is, $R(k)$ of Lemma \ref{continuity} to find $E_\alpha$.   

Finally, we let $E_0 = \emptyset$ and obtain that $E = \bigcup \langle E_\alpha \mid \alpha \in \{0\} \cup S_k^{k+1} \rangle $ is $(k+1)$-twistable and  $E \sbeq \bigcup_{\alpha \in S_k^{k+1}} I(Q'(\alpha)) \sbeq \bigcup_{f \in Q} I(f)$.
\end{proof}

\section{Nontriviality on unbounded chains}
\label{sect:nontriviality}

Let $\langle(P_\alpha, Q_\alpha) \mid \alpha \in S_n^{n+1} \rangle$ be a sequence of $n$-unbounded pairs coming from a tree filtration of $ {}^{\omega_n} \omega$. In this section, we prove by induction the existence of a nontrivial coherent $n$-family on $P_\alpha$, whose nontriviality is witnessed by the values taken on an $n$-twistable set by the functions whose multi-index is in the chain $Q_\alpha$.
The base case of the induction is a $\mathsf{ZFC}$ fact.  

\begin{lemma}
\label{lemmaT1}
For any $1$-twistable set $E=\bigcup_{\gamma\in S^1_0\cup\{0\}} E_\gamma$ for a spine filtration $(\lang P_\alpha \rang_{\alpha < \omega_1}, Q')$ of a strong $\aleph_1$-unbounded pair $(P,Q)$ in $({^\lambda}\omega,\leq)$, there exists a coherent family
$$\Phi=\langle\varphi_f:I(f)\to\mathbb{Z}\mid f\in P\rangle$$ with the property that $$(\Phi \restriction Q') \restriction \restriction E = \langle\varphi_f\restriction I(f)\cap E\mid f\in Q'\rangle$$ is nontrivial.
\end{lemma}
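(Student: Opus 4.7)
The plan is to construct $\Phi$ by transfinite recursion on $\langle P_\alpha\mid\alpha<\omega_1\rangle$, preserving mod-finite coherence at every stage and exploiting the $0$-twistability of each $E_\alpha$ to encode a Todorcevic-style obstruction to trivialization on $E$. The decisive observation throughout is this: for $\alpha\in S^1_0$, $0$-twistability of $E_\alpha$ for $(P_\alpha,Q'\restriction\alpha)$ gives $|E_\alpha\cap I(f)|<\aleph_0$ for every $f\in P_\alpha$. Consequently, the coherence requirement $\varphi_{Q'(\alpha)}\restriction I(f)=^*\varphi_f$ imposes no mod-finite constraint on $\varphi_{Q'(\alpha)}\restriction E_\alpha$; we are free to prescribe these values as we wish. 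A small additional consequence, which will matter later, is that each pairwise intersection $E_\alpha\cap E_\gamma$ (for $\gamma<\alpha$ in $S^1_0$) is finite.

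At successor stages and at stages where no new spine element is introduced, I would extend $\Phi$ coherently by the standard amalgamation for coherent families along $\leq^*$-countable-cofinality directed sets (cf.\ the discussions in \cite{BeLH21} and \cite{CLH24}). At stage $\alpha\in S^1_0$, I would first extend $\Phi$ to $\varphi_{Q'(\alpha)}$ via amalgamation along a $\leq^*$-cofinal $\omega$-chain $\{f_n\}\subseteq P_\alpha$, then freely overwrite $\varphi_{Q'(\alpha)}\restriction E_\alpha$. By the twistability observation, such overwriting alters $\varphi_{Q'(\alpha)}\restriction I(f_n)$ only on the finite set $E_\alpha\cap I(f_n)$ and so preserves mod-finite coherence with every $\varphi_f$ for $f\in P_\alpha$.

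To prescribe the free values with an eye toward nontriviality, I would fix a nontrivial coherent finite-to-one family $\Sigma=\langle\sigma_\alpha:\alpha\to\omega\mid\alpha<\omega_1\rangle$ of the sort exhibited in Example \ref{ex:fintoone}, which exists in $\mathsf{ZFC}$. In parallel with $\Phi$, I would build injections $b_\alpha:\alpha\hookrightarrow E_\alpha$ for $\alpha\in S^1_0$ organized so as to transport the nontriviality of $\Sigma$ to $E$: at limit points $\alpha$ of $S^1_0$ where $E_\alpha\subseteq\bigcup_{\gamma\in\alpha\cap S^1_0}E_\gamma$, arrange $b_\alpha\restriction\gamma=^* b_\gamma$ for every $\gamma<\alpha$ such that the right-hand side lands in $E_\alpha$. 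Then set $\varphi_{Q'(\alpha)}(b_\alpha(\xi))=\sigma_\alpha(\xi)$ on $\mathrm{range}(b_\alpha)$ and $\varphi_{Q'(\alpha)}=0$ elsewhere on $E_\alpha$. A trivialization $\psi:E\to\mathbb{Z}$ of $(\Phi\restriction Q')\restriction\restriction E$ would then induce a function $\hat\psi:\omega_1\to\mathbb{Z}$ via $\hat\psi(\xi)=\psi(b_\alpha(\xi))$ for any sufficiently large $\alpha\in S^1_0$, well-defined modulo finite error by the compatibility of the $b_\alpha$'s; the trivialization relation would give $\hat\psi\restriction\alpha=^*\sigma_\alpha$ for every $\alpha$, contradicting the nontriviality of $\Sigma$.

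The principal obstacle is the simultaneous construction of the $b_\alpha$'s at the non-limit points of $S^1_0$, where $E_\alpha$ may be essentially disjoint, modulo finite error, from all earlier $E_\gamma$'s, leaving no canonical way to carry the earlier $b_\gamma$'s forward into $b_\alpha$. Addressing this will likely require organizing the recursion around a coherent choice of ladders through $S^1_0$ and defining each $b_\alpha$ compatibly with earlier $b_\gamma$'s only on (mod-finite) cofinal subsets of $\alpha$, so that well-definition of $\hat\psi$ still holds even when individual values must be rerouted through the inclusion structure of the $E_\alpha$'s.
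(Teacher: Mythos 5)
Your opening observation is sound and matches the spirit of the paper's construction: since $E_\alpha$ is $0$-twistable for $(P_\alpha,Q'\restriction\alpha)$, each $E_\alpha\cap I(f)$ ($f\in P_\alpha$) is finite, so the values of $\varphi_{Q'(\alpha)}$ on $E_\alpha$ may be prescribed freely without disturbing coherence, and indeed $E_\alpha\cap E_\gamma$ is finite for all $\gamma<\alpha$ in $S^1_0$. But that very last fact destroys your nontriviality mechanism. You propose to transport the nontriviality of a fixed family $\Sigma=\langle\sigma_\alpha\mid\alpha<\omega_1\rangle$ via injections $b_\alpha:\alpha\hookrightarrow E_\alpha$ satisfying $b_\alpha\restriction\gamma=^*b_\gamma$; since $\mathrm{range}(b_\gamma)\subseteq E_\gamma$ and $b_\alpha$ takes values in $E_\alpha$, the two maps can agree only on the finite set $b_\gamma^{-1}(E_\alpha\cap E_\gamma)$, so the compatibility $b_\alpha\restriction\gamma=^*b_\gamma$ is impossible for every infinite $\gamma<\alpha$ --- including at limit points of $S^1_0$, where $E_\alpha\subseteq\bigcup_{\gamma<\alpha}E_\gamma$ only guarantees coverage by the union, not mod-finite containment of any single $E_\gamma$. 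Your caveat ("for every $\gamma$ such that the right-hand side lands in $E_\alpha$") thus makes the compatibility requirement essentially vacuous, the induced map $\hat\psi$ is not well-defined even modulo finite error on any tail, and the intended contradiction $\hat\psi\restriction\alpha=^*\sigma_\alpha$ never materializes. You flag this as the "principal obstacle," but the suggested remedy (ladders, compatibility only on cofinal subsets) is not carried out, and it faces the same almost-disjointness problem; as it stands the crux of the proof is missing.

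The paper's proof sidesteps coordinate-transport entirely. Rather than copying a nontrivial family through injections, it encodes nontriviality as a finite-to-one-ness condition on supports: fix a countable filtration $E=\bigcup_{i<\omega}E^i$ with each $E^i\cap E_\gamma$ finite, and build $\Phi$ by recursion along a club of $\omega_1$ (using Goblot's theorem to trivialize what has been built so far, then revising along a ladder) so that $\mathrm{supp}(\varphi_{Q'(\delta)})$ meets every $E_\gamma$ for $\gamma$ in a suitable subset $S\cap\delta$ of $S^1_0$, while for each fixed $i$ it meets only finitely many $E_\gamma$ inside $E^i$. A putative trivialization $\psi$ would have support meeting uncountably many $E_\gamma$, hence infinitely many inside a single $E^i$ by pigeonhole, contradicting the finite-to-one condition when compared with $\varphi_{Q'(\delta)}$ for large $\delta$. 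That mechanism needs no coordination of "coordinates" across the pairwise almost-disjoint sets $E_\gamma$, which is precisely the point at which your argument breaks. To repair your proposal you would need to replace the $b_\alpha$-transport by some such support-counting (or otherwise intrinsic) obstruction; as written, the proof is incomplete at its decisive step.
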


\begin{proof}
Here the broad idea is that the decomposition $E=\bigcup_{\gamma\in S^1_0} E_\gamma$ sufficiently resembles the decomposition $\omega_1\times\omega=\bigcup_{\gamma\in\omega_1}\{\gamma\}\times\omega$ to support constructions along the lines of Lemma \ref{lem:1} (which trace, in turn, to those of Example \ref{ex:fintoone}). 
Before more precisely describing such a construction, let us record a few observations. First, it is among our assumptions that $P$ possesses a cofinal directed set of the form $P'=\langle p_\alpha \mid \alpha<\omega_1\rangle$; thus to prove the lemma, it will suffice by the remarks concluding Section \ref{sect:ntcs+Ak} to construct a coherent family $\Phi\restriction P'\cup Q'=\left<\varphi_f:I(f)\to \mathbb{Z}\mid f\in P'\cup Q'\right>$ with the property that $(\Phi\restriction Q')\restriction\restriction E$ is nontrivial.
Second, note that $E_\alpha\subseteq I(Q'(\beta))$ for all $\alpha\leq\beta \in S_0^1$, thus if $\gamma$ is a successor of some $\beta$ in the natural enumeration of $S_0^1$ then $\bigcup_{\alpha\in S^1_0\cap\gamma} E_\alpha$ has finite intersection with $E_\gamma$; write $S\subset S^1_0$ for the collection of such $\gamma$.
Relatedly and third, note that for every $f\in P$ there exists a least $\beta\in S^1_0$ such that $\gamma\in S^1_0\backslash\beta$ implies that $E_\gamma\cap I(f)$ is finite.
Write $A$ for the set of accumulation points of $S_0^1$ below $\omega_1$ and let $C$ denote the closed unbounded subset
$$\{\beta\in A\mid E_\gamma\cap I(p_\alpha)\text{ and }E_\gamma\cap I(Q'(\alpha))\text{ are finite for all }\alpha<\beta\text{ and }\gamma\in S^1_0\backslash\beta\}$$
of $\omega_1$; note that we may without loss of generality assume the collection of functions of the form $p_\alpha$ or $Q(\alpha)$ for some $\alpha<\beta$ to be $\leq^*$-directed for each $\beta\in C$ as well.
Fix also a filtration of $E = \bigcup_{i<\omega} E^i$ by sets $E^i$ whose intersection with $E_\gamma$ is finite for every $(i,\gamma)\in \omega\times S_0^1$, and choose $e^i_\gamma\in E_\gamma\backslash E^i$ so that $i\neq j$ implies $e^i_\gamma\neq e^j_\gamma$ for all $i,j\in\omega$ and $\gamma\in S$.
We will construct $\Phi\restriction P'\cup Q'$ by recursion on the elements $\xi$ of $C$, maintaining as we go the hypotheses that for all $\eta\in C\cap\xi$ and $\alpha<\eta\leq\delta<\xi$,
$$\{\gamma\in S^1_0\backslash\eta\mid I(p_\alpha)\cap\mathrm{supp}(\varphi_{Q'(\delta)})\cap E_\gamma\neq\varnothing\}$$
and
$$\{\gamma\in S^1_0\backslash\eta\mid I(Q'(\alpha))\cap\mathrm{supp}(\varphi_{Q'(\delta)})\cap E_\gamma\neq\varnothing\}$$
are finite, as is
\begin{equation}
\label{cond:IH3}
\{\gamma\in S^1_0\mid E^i\cap\mathrm{supp}(\varphi_{Q'(\delta)})\cap E_\gamma\neq\varnothing\}\end{equation}
for every $i\in\omega$ and $\delta<\xi$.
The third of these hypotheses is, in essence, a finite-to-one-ness condition which, alongside the condition that
\begin{equation}
\label{cond:IH4}
\mathrm{supp}(\varphi_{Q'(\delta)})\cap E_\gamma\neq\varnothing\text{ for all }\gamma\in S\cap\delta\text{ and }\delta<\xi
\end{equation}
will ensure, at the construction's conclusion, the nontriviality of $\Phi\restriction Q'$, while the first two hypotheses will facilitate our construction's continuation.
Write $\mathrm{RH}(\xi)$ for the conjunction of these conditions, and note that it's trivially conserved at limit steps; thus it will suffice to establish $\mathrm{RH}(\xi)$ at base and successor steps $\xi\in C$.

For the base case $\xi = \min C$, write $\Phi_\xi$ for the family
$$\left<\varphi_f: I(f)\to\mathbb{Z}\mid f = p_\alpha\text{ or }f = Q'(\alpha)\text{ for some }\alpha<\xi\right>$$ and define the functions of $\Phi_\xi\restriction\restriction (\lambda\backslash\xi\times\omega)$ all to constantly equal $0$, and, using an ordertype-$\omega$ enumeration of $S\cap\xi$, define $\Phi_\xi\restriction\restriction \xi\times\omega$ to satisfy the only non-vacuous conditions of $\mathrm{RH}(\xi)$, namely items (\ref{cond:IH3}) and (\ref{cond:IH4}) above; note that it's straightforward to additionally arrange that the supports of all functions in $\Phi_\xi$ fall within $E$.

Suppose next that $\xi$ is the successor in $C$ of some $\eta$ and that $\mathrm{RH}(\eta)$ holds.
By Goblot's theorem, there exists a family $\varphi:\eta\times\omega\to\mathbb{Z}$ trivializing $\Phi_\eta\restriction\restriction\eta\times\omega$; fix also a sequence $C_\eta=\langle\eta_i\mid\omega\rangle$ which is increasing and cofinal in $\eta$.
To define a $\varphi_{Q'(\eta)}$ satisfying the $\eta=\delta$ instance of (\ref{cond:IH3}) and (\ref{cond:IH4}) above, we revise $\varphi$ along $C_\eta$ as follows: for $x\in I(Q'(\eta))$, we let
$\varphi_{Q'(\eta)}(x)= 
\varphi(x)$ \emph{unless $x\in E_\gamma$ for some  $\gamma\in S\cap [\eta_j,\eta_{j+1})$ with $\mathrm{supp}(\varphi)\cap E_\gamma\cap E_j\neq\varnothing$}.
For such $x$ and $E_\gamma$, let
\[
\varphi_{Q'(\eta)}(x)=  \begin{cases} 
1 & \text{if } x= e^j_\gamma \\
0 & \textrm{otherwise} 
\end{cases}
\]
Next, fix bijections $b:\omega\to\eta$ and $a:\omega\to S^1_0\cap [\eta,\xi)$ and for each $\beta\in S^1_0\cap [\eta,\xi)$ define $\varphi_{Q'(\beta)}(x)$ by
\[
\varphi_{Q'(\beta)}(x)=  \begin{cases} 
\varphi_{Q'(\eta)}(x) & \text{if } x\in I(Q'(\eta)) \\
1 & \text{if }x=e^i_\delta\text{ for some }\delta\in S\cap [\eta,\beta)\text{ with }i\text{ the minimum of } \\
& \hspace{.15 cm}\{j\in\omega\mid e_\delta^j\not\in I(p_{b(\alpha)})\cup I(Q'(b(\alpha)))\text{ for any }\alpha\in a^{-1}(\delta)\}\\
0 & \textrm{otherwise} 
\end{cases}
\]
Again by Goblot's theorem, there exists a trivialization $\psi:\xi\times\omega\to\mathbb{Z}$ of the (suitable restrictions of the) functions so far defined; one then defines the restriction to $\xi\times\omega$ of all yet-undefined $\varphi_{p_\alpha}$ and $\varphi_{Q'(\alpha)}$ with $\alpha<\xi$ as $\psi\restriction I(p_\alpha)$ and $\psi\restriction I(Q'(\alpha))$, respectively, and lets the remainder of these functions constantly equal $0$.
This completes the construction.
\end{proof}

The easy verification of the following fact, which will be invoked several times below, is left to the reader.
\begin{lemma}
\label{lemmarestriction}
For every pair of directed sets $P \subseteq P' \subseteq {}^{\omega_n} \omega$ and domains $E \subseteq E' \subseteq \omega_n \times \omega$, if $\Psi = \{ \psi_{\vec{f}} : E' \to \mathbb{Z} \mid \vec{f} \in P'^{(k)} \} $ trivializes $\Phi$, then $(\Psi \restriction P^{(k)}) \restriction \restriction E$ trivializes $(\Phi \restriction P^{(k)}) \restriction \restriction E$.
\end{lemma}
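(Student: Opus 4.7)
The plan is simply to unfold definitions; no real idea is needed beyond the observation that restricting the domain of a mod-finite identity preserves that identity. Fix an arbitrary $\vec{f} \in P^{(k+1)}$. Since $P \sbeq P'$, we have $\vec{f} \in (P')^{(k+1)}$, and each deleted subtuple $\vec{f}^i$ belongs to $P^{(k)} \sbeq (P')^{(k)}$ because its entries are just a subcollection of those of $\vec{f}$. The hypothesis that $\Psi$ trivializes $\Phi$ therefore applies at $\vec{f}$, yielding
$$\sum_{i=0}^{k}(-1)^i \psi_{\vec{f}^i} =^* \varphi_{\vec{f}}$$
as an identity of $\mathbb{Z}$-valued functions with common domain $E' \cap I(f_0)$. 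Passing to the smaller subset $E \cap I(f_0) \sbeq E' \cap I(f_0)$ can only shrink (a fortiori never enlarge) the finite set of coordinates on which the two sides differ, so the mod-finite identity survives the restriction. Since $\vec{f} \in P^{(k+1)}$ was arbitrary, this is exactly the assertion that $(\Psi \restriction P^{(k)}) \restriction \restriction E$ trivializes $(\Phi \restriction P^{(k)}) \restriction \restriction E$.

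The only point worth double-checking is that the restricted family $\Psi \restriction P^{(k)}$ still contains every function needed in the alternating sum over a given $\vec{f} \in P^{(k+1)}$, which is immediate by the subtuple observation above; thus there is no real obstacle to the proof. The same argument handles the degenerate case $k = 1$, where trivialization reduces to the notion of Definition \ref{def:coh_A_lambda} and the restriction manipulation is verbatim the same.
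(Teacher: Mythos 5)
Your argument is correct and is exactly the ``easy verification\ldots left to the reader'' that the paper invokes: for each $\vec{f}\in P^{(k+1)}$ the trivialization identity $\sum_i(-1)^i\psi_{\vec{f}^i}=^*\varphi_{\vec{f}}$ holds on $E'\cap I(f_0)$, and a mod-finite equality only improves under restriction to $E\cap I(f_0)$, so the restricted family trivializes the restricted family. One small quibble with the closing parenthetical: the degenerate case that actually requires the ``$0$-tuples index a singleton'' convention and reduces to Definition~\ref{def:coh_A_lambda} is the one where $\Psi$ is indexed by $P'^{(0)}$ (trivializing a $1$-coherent $\Phi$), not $k=1$ where $\Psi$ is indexed by $1$-tuples and trivializes a $2$-coherent $\Phi$; but this mislabeling does not affect the substance of your argument, which handles all cases uniformly.
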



We turn now to the induction step of our argument; this is based on some guessing principles. These allow us to construct nontrivial coherent families by trasfinite recursion. More precisely, they allow us to negate all putative trivializations by choosing between a pair of extensions one that is not trivialized by any extension of the family coded by the set $A_\alpha$ in the $\lozenge(S^{k+1}_k)$-sequence $\langle A_\alpha \mid \alpha \in S^{k+1}_k \rangle$. 

For $k \geq 1$, let $T(k)$ denote the claim that for every strong $k$-unbounded pair $(P,Q)$, every one of its $k$-spine filtrations $(\langle P_\alpha \mid \alpha < \omega_k\ \rangle, Q')$ and every $k$-twistable set $E$ for $(\lang P_\alpha \mid \alpha < \omega_k \rangle, Q') $ there exists a $k$-coherent family $\Phi$ such that $(\Phi \restriction Q'^{(k)}) \restriction \restriction E$ is nontrivial. Note that Lemma \ref{lemmaT1} above is $T(1)$.

\begin{lemma}
\label{lemma:induction}
For every $k \geq 1$, if $T(k)$ and $\lozenge(S_k^{k+1})$ hold, then $T(k+1)$ holds.

\end{lemma}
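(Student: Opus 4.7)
The plan is to build the $(k{+}1)$-coherent family $\Phi$ by transfinite recursion along $\omega_{k+1}$, writing $R=\langle r_\alpha\mid\alpha<\omega_{k+1}\rangle$ for the spine of $Q'$ and using the $\lozenge(S_k^{k+1})$-sequence $\langle A_\alpha\mid\alpha\in S_k^{k+1}\rangle$ to anticipate and defeat every putative $k$-family trivialization of $\Phi$. Since each ideal $P_\alpha$ has cofinality at most $\aleph_k$, Goblot's vanishing theorem permits, at any stage, the extension of a partial $(k{+}1)$-coherent family over $P_\alpha$ to one over $P_{\alpha+1}$. The substantive content is therefore concentrated at stages $\alpha\in S_k^{k+1}$, where the $(k{+}1)$-twistability decomposition of $E$ supplies the distinguished $k$-twistable set $E_\alpha\subseteq I(r_\alpha)$ for the induced spine filtration on $(P_\alpha,Q'\restriction\alpha)$.

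At such an $\alpha$, apply $T(k)$ to $(P_\alpha,Q'\restriction\alpha)$ with its inherited $k$-spine filtration and twistable set $E_\alpha$. This yields a $k$-coherent family $\Theta^\alpha=\langle\theta^\alpha_{\vec g}\mid \vec g\in P_\alpha^{(k)}\rangle$ whose restriction to multi-indices from $Q'\restriction\alpha$, evaluated on $E_\alpha$, is nontrivial. Separately, decode the diamond guess $A_\alpha$ as a candidate $k$-family $\Psi_\alpha$ of putative trivializing data on the relevant multi-indices and coordinates in $E_\alpha$; this coding is available because $|E_\alpha|\leq\aleph_k$ by Lemma \ref{sizetwist} and the multi-indices of interest from $Q'\restriction\alpha$ also have size at most $\aleph_k$, so the total evaluation data fits inside $\omega_{k+1}$. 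Now define $\varphi_{r_\alpha,\vec g}$ for $\vec g\in(Q'\restriction\alpha)^{(k)}$ so that
\[
\varphi_{r_\alpha,\vec g}\restriction E_\alpha \;=^{\ast}\; \partial(\Psi_\alpha)_{r_\alpha,\vec g}\restriction E_\alpha \;+\; \theta^\alpha_{\vec g}\restriction E_\alpha,
\]
let $\varphi_{r_\alpha,\vec g}$ vanish outside $E_\alpha$, and extend mod finite to all $\vec g\in P_\alpha^{(k)}$ compatibly with $(k{+}1)$-coherence, once again by Goblot on $P_\alpha$; Lemma \ref{lemmarestriction} will keep the bookkeeping clean.

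To see the resulting $\Phi$ is $(k{+}1)$-nontrivial, suppose a $k$-family $\Psi$ globally trivializes it. A standard $\lozenge$ reflection argument yields a stationary set of $\alpha\in S_k^{k+1}$ at which the fragment of $\Psi$ on the evaluation data codeable before stage $\alpha$ agrees mod finite with $\Psi_\alpha$. For any such $\alpha$, combining $\partial(\Psi)_{r_\alpha,\vec g}=^{\ast}\varphi_{r_\alpha,\vec g}$ with the above construction forces $\theta^\alpha_{\vec g}\restriction E_\alpha$ to be mod finite trivialized, along tuples $\vec g$ from $Q'\restriction\alpha$, by a $(k{-}1)$-family derived from $\Psi$ itself --- contradicting $T(k)$. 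Hence no such $\Psi$ exists, and $\Phi$ witnesses $T(k{+}1)$.

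The main obstacle will be ensuring continuity of the recursion at those $\alpha\in S_k^{k+1}$ which are themselves limit points of $S_k^{k+1}$. Here the twistability clause $E_\alpha\subseteq\bigcup_{\beta\in\alpha\cap S_k^{k+1}}E_\beta$ is crucial: it guarantees that the nontriviality data for $\Theta^\alpha$ carried on $E_\alpha$ fits together with the $\varphi_{\vec f}$ already laid down for tuples $\vec f\in P_\alpha^{(k+1)}$, so the new assignments $\varphi_{r_\alpha,\vec g}$ can be chosen $(k{+}1)$-coherently with the earlier data. A secondary delicate point is the faithful coding of putative partial trivializations by subsets of $\alpha$, which works precisely because the size bound in Lemma \ref{sizetwist} fits the guesses $A_\alpha\subseteq\alpha$ offered by $\lozenge(S_k^{k+1})$.
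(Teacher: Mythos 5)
Your outline correctly identifies the main moving parts — transfinite recursion along the spine, $\lozenge(S_k^{k+1})$ to anticipate trivializations, Goblot's theorem for extensions, and $T(k)$ to produce a nontrivial $k$-coherent $\Theta^\alpha$ at stages $\alpha\in S_k^{k+1}$. However, the defeat mechanism you propose at stage $\alpha$ doesn't go through, and the missing step is exactly the subtle point of this lemma.

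You propose to set $\varphi_{r_\alpha,\vec g}\restriction E_\alpha =^\ast \partial(\Psi_\alpha)_{r_\alpha,\vec g}\restriction E_\alpha + \theta^\alpha_{\vec g}\restriction E_\alpha$, and later to derive a contradiction from $\partial(\Psi)_{r_\alpha,\vec g}=^\ast\varphi_{r_\alpha,\vec g}$. The problem is that $\partial(\Psi_\alpha)_{r_\alpha,\vec g}$ is not well-defined. The diamond sequence at stage $\alpha$ can only code the fragment of a putative trivialization $\Psi$ on $k$-tuples from $Q'\restriction\alpha$ (and on $\bigcup\mathcal{E}\restriction(\alpha+1)$): that is what can be faithfully encoded by $A_\alpha\subseteq\alpha$. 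But $\partial(\Psi)$ evaluated at the $(k+1)$-tuple $(\vec g, r_\alpha)$ involves summands $\Psi_{\vec g^i, r_\alpha}$ whose $k$-tuples contain $r_\alpha=Q'(\alpha)$, which lies outside $Q'\restriction\alpha$. So $\Psi_\alpha$ does not determine $\partial(\Psi_\alpha)_{r_\alpha,\vec g}$, and the defining equation has no content; your later claim that matching $\partial(\Psi)_{r_\alpha,\vec g}$ against $\varphi_{r_\alpha,\vec g}$ forces a trivialization of $\theta^\alpha$ inherits the same gap.

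The paper resolves this with a device that your proposal lacks. Rather than trying to ``pre-subtract'' the guessed trivialization, it keeps $\Phi^{\alpha+1}$ in the form $\partial(\cdot)$ (so $(k+1)$-coherence and triviality over $P_{\alpha+1}$ are automatic, bypassing the extension-compatibility worries you flag), and it offers \emph{two} candidates: $\Phi^{\alpha,0}=\partial(\widetilde{\Theta}^\alpha)$ and $\Phi^{\alpha,1}=\partial(\widetilde{\Theta}^\alpha+\widetilde{\Xi}^\alpha)$, which agree on $P_\alpha^{(k+1)}$ but differ on tuples containing $r_\alpha$. The key Claim is that no $\Upsilon$ on $(Q'\restriction\alpha)^{(k)}$ (the only thing $A_\alpha$ can code) extends to trivialize both: taking the slice $d_{f_\alpha}$ and applying $\partial$ shows that two such extensions would yield a trivialization of $\Xi^\alpha\restriction(Q'\restriction\alpha)^{(k)}\restriction\restriction E_\alpha$, contradicting $T(k)$. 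This is precisely the place where one ``pays'' for the fact that the diamond guess cannot see data at $r_\alpha$, and it is the step your argument would need to supply.
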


\begin{proof}

Let $E= \bigcup \mathcal{E}$ with $\mathcal{E} = \langle E_\alpha \mid \alpha \in \lbrace 0 \rbrace \cup S_k^{k+1} \rangle $ be our $(k+1)$-twistable set. For $\alpha \in S_k^{k+1}$, let $\mathcal{P}^\alpha$ be the set of all families of the form $\lbrace \psi_{\vec{f}}: \bigcup \mathcal{E} \restriction (\alpha +1)  \cap I(f_0) \to \Z \mid \vec{f} \in (Q' \restriction \alpha)^{(k)} \rbrace $. Similarly, let $\mathcal{P}^{\omega_{k+1}}$ be the set of all families of the form $ \lbrace  \psi_{\vec{f}}: E  \cap I(f_0) \to \Z   \mid  \vec{f} \in Q'^{(k)}    \rbrace  $

We fix a coding function $G$ with domain ${}^{\leq \omega_{k+1}} 2$ such that

\begin{itemize}
    \item for all $\alpha \in S_k^{k+1} \cup \{\omega_{k+1}\}$, $G \restriction {^{\alpha}}2$ is a surjection 
    from ${^{\alpha}}2$ to $\mathcal{P}^\alpha$.
    \item for all $\alpha < \beta$, both in $S_k^{k+1} \cup \{\omega_{k+1}\}$ and all $\sigma \in {^{\alpha}}2$ 
    and $\sigma' \in {^{\beta}}2$, if $\sigma'$ end-extends $\sigma$, then the family  
    $G(\sigma')$ extends the family $ G(\sigma)$.

\end{itemize}

We show that such a $G$ exists by recursion on the increasing enumeration of $S_k^{k+1} \cup \{ \omega_{k+1} \}$. 

Let $\alpha \in S_k^{k+1}$ and assume that the conditions are satisfied for all lower members of the stationary set.
If $\alpha > \sup(\alpha \cap S_k^{k+1})$, then $\alpha = \sup (\alpha \cap S_k^{k+1}) + \omega_k$. Since $\vert \bigcup \mathcal{E} \restriction (\alpha +1) \vert \leq \aleph_k$ by Lemma \ref{sizetwist},  we can use the last $\omega_k$ values of $\sigma \in {}^\alpha 2$ to code the values of all possible extensions of $\bigcup_{\gamma \in \alpha \cap S_k^{k+1}} G(\sigma \restriction \gamma)$ on $\bigcup \mathcal{E} \restriction (\alpha+1)$ and for multi-indices in $(Q' \restriction \alpha)^{(k)}$, thus preserving the two conditions above.

If $\alpha = \sup(\alpha \cap S_k^{k+1})$, then we use the fact that $E_\alpha \subseteq \bigcup \mathcal{E} \restriction\alpha$ and let $G(\sigma) = \bigcup_{\gamma \in \alpha \cap S_k^{k+1}} G(\sigma \restriction \gamma)$.

Finally, for $\sigma \in {}^{\omega_{k+1}} 2$ we let $G(\sigma) = \bigcup_{\alpha \in  S_k^{k+1}} G(\sigma \restriction \alpha)$.

Now we define a sequence of $(k+1)$-coherent (and trivial) families $\langle \Phi^\alpha \mid \alpha < \omega_{k+1}\rangle$ such that $\Phi = \bigcup_{\alpha < \omega_{k+1}} \Phi^\alpha$ is nontrivial.

At limit $\lambda$, let $\Phi^\lambda = \bigcup_{\alpha< \lambda} \Phi^\alpha$. At successor step, we know by Goblot's theorem that $\Phi^\alpha$ is trivial, so we let $ \partial(\Theta^\alpha) =  \Phi^\alpha$, then we just consider the extension $\widetilde{\Theta}^\alpha$ defined as 
\[
\widetilde{\Theta}_{\vec{f}}^\alpha = 
\begin{cases}
\Theta^\alpha_{\vec{f}} & \text{if } \vec{f} \in P_\alpha^{(k)} \\
0 & \text{if } \vec{f} \in P_{\alpha+1}^{(k)} \setminus P_\alpha^{(k)}
\end{cases}
\]

Now, if $\mathrm{cf}(\alpha) < \omega_k$, then we simply let $\Phi^{\alpha+1} = \partial(\widetilde{\Theta}^\alpha)$.

If $\mathrm{cf}(\alpha) = \omega_k $ instead, then $(P_\alpha, Q' \restriction \alpha) $ is a strong $k$-unbounded pair, so by the induction assumption $T(k)$, there exists a $k$-coherent family $\Xi^\alpha$ on $P_\alpha$ such that $\Xi^\alpha \restriction (Q' \restriction \alpha)^{(k)} \restriction \restriction E_\alpha$ is nontrivial. Similarly, we let
\[
\widetilde{\Xi}_{\vec{f}}^\alpha = 
\begin{cases}
\Xi^\alpha_{\vec{f}} & \text{if } \vec{f} \in P_\alpha^{(k)} \\
0 & \text{if } \vec{f} \in P_{\alpha+1}^{(k)} \setminus P_\alpha^{(k)}
\end{cases}
\]
 
Then we let $\Phi^{\alpha, 0} = \partial(\widetilde{\Theta}^\alpha     )$, and $\Phi^{\alpha, 1} = \partial( \widetilde{\Theta}^\alpha +\widetilde{\Xi}^\alpha)$. We need to decide which to use as our $\Phi^{\alpha+1}$. Let $\langle A_\alpha \mid \alpha \in S_k^{k+1} \rangle$ be a $\lozenge(S_k^{k+1})$-sequence. Consider $A_\alpha \in {}^\alpha 2$. If $\Psi_{A_\alpha} = G(A_\alpha)$ does not trivialize the family  $(\Phi^\alpha \restriction (Q' \restriction \alpha)^{(k+1)}) \restriction \restriction E_\alpha$, then put $\Phi^{\alpha +1} = \Phi^{\alpha,  \epsilon}$ for any $\epsilon \in \lB 0,1 \rB $. If it does, consider the following claim:

\begin{claim}
For every trivialization $ \Upsilon$ of $(\Phi^\alpha \restriction (Q' \restriction \alpha)^{(k+1)}) \restriction \restriction E_\alpha$ there exists $\epsilon \in \lB 0,1 \rB $ such that $\Upsilon$ does not extend to a trivialization of the family  $(\Phi^{\alpha, \epsilon} \restriction (Q' \restriction (\alpha+1))^{(k+1)}) \restriction \restriction E_\alpha$ 
\end{claim}

\begin{proof}[Proof of Claim]
Suppose toward a contradiction that there exist $\Upsilon^0$ and $\Upsilon^1$ that trivialize $(\Phi^{\alpha, 0} \restriction (Q' \restriction (\alpha+1))^{(k+1)}) \restriction \restriction E_\alpha$ and  $(\Phi^{\alpha, 1} \restriction (Q' \restriction (\alpha+1))^{(k+1)}) \restriction \restriction E_\alpha$ respectively, and $\Upsilon^0 \restriction (Q'\restriction \alpha)^{(k)} = \Upsilon = \Upsilon^1 \restriction (Q'\restriction \alpha)^{(k)} $. 

We define $d_f(\Upsilon)$ by $( d_f (\Upsilon))_{\vec{h}}  = \Upsilon_{\vec{h}, f }  $ for $\vec{h} = (h_0, ... , h_{k-2})$. If $f = Q'(\alpha)$ and $\vec{g} = (g_0, ... , g_{k-1})$ ranges over $(Q' \restriction \alpha)^{(k)} $ we have (omitting restrictions to $I_{g_0} \cap E_\alpha$ for readability):
\begin{align*}
\partial ( d_f ( \Upsilon^0)  )_{\vec{g}} 
&= \sum_{0 \leq i \leq k-1} (-1)^i (d_f (\Upsilon^0))_{\vec{g}^i} \\
&= \sum_{0 \leq i \leq k-1} (-1)^i \Upsilon^0_{\vec{g}^i, f} \\
&=^\ast (-1)^{k-1} \Upsilon^0_{\vec{g}} + \Phi^{\alpha, 0}_{\vec{g}, f} \\
&= (-1)^{k-1} \Upsilon^0_{\vec{g}} + (-1)^k \Theta^\alpha_{\vec{g}}.
\end{align*} 

By the same reasoning 
\[   \partial  ( d_f ( \Upsilon^1)  )_{\vec{g}}  =^\ast  (-1)^{k-1} \Upsilon^1_{\vec{g}} + (-1)^k \cdot ( \Theta^\alpha_{\vec{g}} + \Xi^{\alpha}_{\vec{g}} ),    \]
so that, since $\Upsilon^0 $ and $\Upsilon^1$ agree on $(Q' \restriction \alpha)^{(k)}$,
\[ \partial ( d_f ( \Upsilon^1 - \Upsilon^0 )  ) = (-1)^k (\Xi^\alpha \restriction (Q' \restriction \alpha)^{(k)} )\restriction \restriction E_\alpha ,  \]
thus contradicting the fact that $(\Xi^\alpha \restriction (Q' \restriction \alpha)^{(k)}) \restriction \restriction E_\alpha$ is nontrivial.

It should be noticed that the case $k = 1$ is computationally identical but conceptually different insofar as we need to stipulate that a family indexed by $0$-tuples is a singleton and that a function defined on $I(f) \cap E_\alpha$ can be regarded as a trivialization of a $1$-family below $f$ as it can be extended to the entire grid $\omega_n \times \omega$.
\end{proof}

Then we let $\Phi^{\alpha+1} = \Phi^{\alpha, \epsilon}$ for the $\epsilon \in \{0, 1\}$ such that $\Psi_{A_\alpha}$ does not extend to a trivialization of $(\Phi^{\alpha, \epsilon} \restriction (Q' \restriction (\alpha+1))^{(k+1)}) \restriction \restriction E_\alpha$. 

Finally, we let $\Phi = \bigcup_{\alpha < \omega_{k+1}} 
 \Phi^\alpha$. 

We check that $(\Phi \restriction Q'^{(k+1)}) \restriction \restriction E$ is nontrivial. Let $\Psi$ be any putative trivialization, and let $A \in {}^{\omega_{k+1}} 2 $ be such that $G(A)= \Psi$. Let $\alpha \in S_k^{k+1} $ be such that $A_\alpha = A\cap \alpha$. So, $G(A_\alpha) = \Psi_{A_\alpha} =  (\Psi \restriction (Q' \restriction \alpha)^{(k)})  \restriction \restriction \bigcup \mathcal{E} \restriction (\alpha+1)$. By Lemma \ref{lemmarestriction}, $\Psi_{A_\alpha}$ should trivialize $(\Phi^\alpha \restriction (Q' \restriction \alpha)^{(k+1)}) \restriction \restriction E_\alpha$. Therefore, $(\Phi^{\alpha+1} \restriction (Q' \restriction (\alpha+1))^{(k+1)} )\restriction \restriction E_\alpha $ is not trivialized by any extension of $\Psi_{A_\alpha}$ such as $(\Psi \restriction (Q' \restriction (\alpha+1)^{(k)})) \restriction \restriction \bigcup \mathcal{E} \restriction (\alpha+1) $. Since $Q'\restriction (\alpha+1) \subseteq Q'$ and $\bigcup \mathcal{E} \restriction (\alpha+1) \subseteq E$, again by Lemma \ref{lemmarestriction},  $\Psi$ is not a trivialization of $(\Phi \restriction Q'^{(k+1)}) \restriction \restriction E$.
\end{proof}

\begin{theorem}

\label{nontrivtwist}

Let $k \geq 1$. If $\bigwedge_{i < k} \Diamond(S_i^{i+1})$ holds, then the statement $T(k)$ holds. 

\end{theorem}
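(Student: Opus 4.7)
The plan is a straightforward induction on $k \geq 1$, using the two main lemmas of the section as the base case and the inductive step respectively; the theorem is essentially a packaging of Lemma \ref{lemmaT1} and Lemma \ref{lemma:induction} into a single statement.

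For the base case $k=1$, I would invoke Lemma \ref{lemmaT1}, which establishes $T(1)$ as a $\mathsf{ZFC}$ fact with no guessing principle required; this suffices under the (at-most-one-clause) hypothesis $\bigwedge_{i<1}\Diamond(S_i^{i+1})$ regardless of how the indexing is parsed.

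For the inductive step, suppose $T(k)$ has been established under the hypothesis $\bigwedge_{i<k}\Diamond(S_i^{i+1})$. Assuming further the hypotheses $\bigwedge_{i<k+1}\Diamond(S_i^{i+1})$, which in particular include $\Diamond(S_k^{k+1})$, all premises of Lemma \ref{lemma:induction} are met, and its conclusion delivers $T(k+1)$, completing the induction.

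There is no real obstacle here: both the initial case and the successor step are supplied verbatim by the preceding lemmas, and the proof consists only in chaining them together.
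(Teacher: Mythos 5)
Your proposal is correct and matches the paper's own proof exactly: the paper likewise derives the theorem by induction, using Lemma \ref{lemmaT1} for the base case $T(1)$ (a $\mathsf{ZFC}$ fact, so the hypothesis $\Diamond(S_0^1)$ is not even needed) and Lemma \ref{lemma:induction} for the passage from $T(k)$ to $T(k+1)$ under the additional assumption $\Diamond(S_k^{k+1})$. There is nothing more to the argument, and your reading of the indexing in $\bigwedge_{i<k}\Diamond(S_i^{i+1})$ is the intended one.
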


\begin{proof}
The result easily follows from Lemma \ref{lemmaT1} and Lemma \ref{lemma:induction}. 
\end{proof}

\section{Theorem B}
\label{sect:thmB}

The following implies Theorem B and will therefore conclude our argument.
To simplify its statement, we invoke stronger hypotheses here than are really necessary.
This is true in two senses: first, by Lemma \ref{lemmaT1}, $\Diamond(S_0^1)$ is unneeded, and the statement of Theorem B reflects this fact. Second, even for $i>0$, the assumptions $\Diamond(S_i^{i+1})$ are a little stronger than we need. In fact, the weak diamond principles $\mathrm{w}\lozenge(S_i^{i+1})$ suffice. To see this, one simply needs to run the argument of \cite[Claim 4.11]{Cas24} with domains $\bigcup \mathcal{E} \restriction(\alpha+1)$, and use the fact that these are monotone increasing in $\alpha$. 

\begin{theorem}
For any $n>0$, if $2^{\aleph_i}=\aleph_{i+1}$ and $\Diamond(S_i^{i+1})$ hold for all $i\leq n$ then $\mathrm{lim}^{n+1} \A_{\aleph_n} \neq 0$.
\end{theorem}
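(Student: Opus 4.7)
The plan is to adapt the inductive construction of Lemma \ref{lemma:induction} to the tree filtration of Lemma \ref{lemmatree}, which will substitute for the $(n+1)$-spine filtration that cannot exist by Lemma \ref{lem:no_long_chains}. The goal is to build an $(n+1)$-coherent family $\Phi=\bigcup_{\alpha<\omega_{n+1}}\Phi^\alpha$ on ${^{\omega_n}}\omega$ whose restriction to $\mathrm{im}(H)$ is nontrivial, each $\Phi^\alpha$ being supported on $P_\alpha$; by Proposition \ref{prop:limnAlambda} this suffices to establish $\mathrm{lim}^{n+1}\,\mathbf{A}_{\aleph_n}\neq 0$.

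First, using $2^{\aleph_n}=\aleph_{n+1}$ to supply a special $\omega_{n+1}$-Aronszajn tree, apply Lemma \ref{lemmatree} to fix the filtration $(P_\alpha)_{\alpha<\omega_{n+1}}$ together with, for each $\alpha\in S_n^{n+1}$, a strong $n$-unbounded pair $(P_\alpha,Q_\alpha)$ and a bounding function $f_\alpha\in P_{\alpha+1}$. For each such $\alpha$, Lemma \ref{exitwist} supplies an $n$-twistable set $E_\alpha\subseteq\bigcup_{g\in Q_\alpha}I(g)$, and Theorem \ref{nontrivtwist}, which requires only $\Diamond(S_i^{i+1})$ for $i<n$, supplies an $n$-coherent family $\Xi^\alpha$ on $P_\alpha$ whose restriction $(\Xi^\alpha\restriction Q_\alpha^{(n)})\restriction\restriction E_\alpha$ is nontrivial.

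Next, fix a $\Diamond(S_n^{n+1})$-sequence $\langle A_\alpha\mid\alpha\in S_n^{n+1}\rangle$ and a coding function $G$ analogous to the one in Lemma \ref{lemma:induction}, and build $\Phi$ by recursion on $\alpha$: at limits take unions; at a successor stage $\alpha+1$, Goblot's vanishing theorem (applicable because $\mathrm{cf}(P_\alpha)\leq\aleph_n$ and the transition maps of $\mathbf{A}_{\aleph_n}$ are surjective) yields a trivialization $\Theta^\alpha$ of $\Phi^\alpha$, which extends by zero to $\widetilde{\Theta}^\alpha$ on $P_{\alpha+1}$. If $\alpha\notin S_n^{n+1}$, set $\Phi^{\alpha+1}=\partial(\widetilde{\Theta}^\alpha)$; if $\alpha\in S_n^{n+1}$, form the two candidates $\partial(\widetilde{\Theta}^\alpha)$ and $\partial(\widetilde{\Theta}^\alpha+\widetilde{\Xi}^\alpha)$ and use $A_\alpha$ to choose the one whose restriction to $Q_\alpha^{(n+1)}$ on $E_\alpha$ is not trivialized by any extension of $G(A_\alpha)$. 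The Claim from the proof of Lemma \ref{lemma:induction} supplies such a choice---its argument depends only on the nontriviality of $(\Xi^\alpha\restriction Q_\alpha^{(n)})\restriction\restriction E_\alpha$, so it goes through verbatim. Nontriviality of $\Phi$ then follows as in that lemma: any putative trivialization $\Psi$ codes to some $A$, $\Diamond(S_n^{n+1})$ furnishes an $\alpha$ with $A\cap\alpha=A_\alpha$, and Lemma \ref{lemmarestriction} delivers a contradiction with the stage-$\alpha+1$ choice.

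The main obstacle will be specifying the coding function $G$ coherently over the tree filtration. In Lemma \ref{lemma:induction} the twistable-domain hierarchy $\mathcal{E}$ descends from a single $(n+1)$-twistable set associated to an $(n+1)$-spine filtration; here we must instead assemble a hierarchy $\mathcal{E}=\langle E_\beta\mid\beta\in S_n^{n+1}\rangle$ from the separately chosen $E_\beta$'s, with enough continuity at limits of $S_n^{n+1}$ that $G$ can be defined recursively as before. Once this bookkeeping is in place, the remainder of the argument is essentially a cosmetic variation on Section \ref{sect:nontriviality}.
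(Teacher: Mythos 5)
Your skeleton is the same as the paper's: tree filtration from Lemma \ref{lemmatree}, twistable sets from Lemma \ref{exitwist}, the families $\Xi^\alpha$ from Theorem \ref{nontrivtwist}, then a $\lozenge(S_n^{n+1})$-guided recursion with the two candidate extensions $\partial(\widetilde{\Theta}^\alpha)$ and $\partial(\widetilde{\Theta}^\alpha+\widetilde{\Xi}^\alpha)$ at cofinality-$\omega_n$ stages. But the one step you defer as ``the main obstacle'' --- the coding function $G$ --- is a genuine gap, and the route you sketch for closing it is the wrong one. You propose to code trivializations restricted to a coherent hierarchy $\mathcal{E}=\langle E_\beta\mid\beta\in S_n^{n+1}\rangle$ assembled from the separately chosen twistable sets. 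The continuity machinery of Section \ref{sect:twistable} (the statements $R(k)$, Lemmas \ref{contind} and \ref{continuity}) produces such limit-point coherence only along a single spine filtration, i.e.\ when the chains $Q'\restriction\xi_\alpha$ are nested initial segments of one chain; here the $Q_\alpha$ are the $H$-images of branches below the nodes $y_\alpha$ of an $\omega_{n+1}$-Aronszajn tree, which are eventually pairwise incomparable, and by Lemma \ref{lem:no_long_chains} no $(n+1)$-spine exists that could realign them. So there is no reason the $E_\alpha$ can be chosen so that $E_\alpha\subseteq\bigcup_{\beta\in\alpha\cap S_n^{n+1}}E_\beta$ at limit points, and your recursion for $G$ (and hence the diamond catch $A_\alpha=A\cap\alpha\Rightarrow G(A_\alpha)=\Psi\restriction\cdots$) does not get off the ground.

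The paper avoids this entirely: at the top level it codes \emph{full} $n$-families, indexed over $(H[T'\restriction\alpha])^{(n)}$ with no restriction of the functions' domains. This is possible precisely because the tree is Aronszajn, so $\vert H[T'\restriction\alpha]\vert\leq\aleph_n$, and because $2^{\aleph_n}=\aleph_{n+1}$ makes ${}^{\alpha}2$ large enough to surject onto these families; the twistable sets are needed only one dimension down, inside Theorem \ref{nontrivtwist}, where the available bit-strings have length $\aleph_k<\aleph_n$. Correspondingly, the successor-stage dichotomy and the final nontriviality argument are run on full domains: one first upgrades the nontriviality of $(\Xi^\alpha\restriction Q_\alpha^{(n)})\restriction\restriction E^\alpha$ to that of $\Xi^\alpha\restriction Q_\alpha^{(n)}$ via Lemma \ref{lemmarestriction}, and then the Claim is applied to trivializations of $\Phi^{\alpha,\epsilon}\restriction(Q_\alpha\cup\{f_\alpha\})^{(n+1)}$ without any $\restriction\restriction E_\alpha$. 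If you replace your coding and your $E_\alpha$-restricted dichotomy by this full-domain version, the rest of your argument goes through as you describe.
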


\begin{proof}
First note our premises subsume those of Lemma \ref{lemmatree}, thus below, we can and will employ the conclusions and notations of the latter. We have then a continuous filtration ${}^{\omega_n} \omega = \bigcup_{\alpha < \omega_{n+1}} P_\alpha $ and strong $n$-unbounded pairs $(P_\alpha, Q_\alpha)$ for all $\alpha  \in S_n^{n+1}$. From Lemma \ref{exitwist} we deduce the existence of an $n$-twistable set $E^\alpha \subseteq \bigcup_{f \in Q_\alpha} I(f) \subseteq I(f_\alpha)$ for $(P_\alpha, Q_\alpha)$. From Theorem \ref{nontrivtwist}  and $\bigwedge_{i < n} \Diamond(S_i^{i+1})$ we derive the existence of an $n$-coherent nontrivial family $\Xi^\alpha$ on $P_\alpha$ such that $(\Xi^\alpha \restriction {Q'}_\alpha^{(n)}) \restriction \restriction E^\alpha$ is nontrivial. By Lemma \ref{lemmarestriction}, we conclude that $\Xi^\alpha \restriction Q_\alpha^{(n)}$ itself is nontrivial.

Now we want to code the restriction of any putative trivialization $\Psi$ to $\mathrm{im}(H)$ in an efficient way. 

For $\alpha \in S_n^{n+1}$, let  $\mathcal{R}^\alpha$ be the set of all $n$-families indexed over $(H[T' \restriction\alpha])^{(n)}$. Similarly, let $\mathcal{R}^{\omega_{n+1}}$ be the set of all families indexed over $(\mathrm{im}(H))^{(n)}$.

We fix a coding function $G$ with domain ${}^{\leq \omega_{n+1}} 2$ such that

\begin{itemize}
    \item for all $\alpha \in S_n^{n+1} \cup \{\omega_{n+1}\}$, $G \restriction {^{\alpha}}2$ is a surjection 
    from ${^{\alpha}}2$ to $\mathcal{R}^\alpha$.
    \item for all $\alpha < \beta$, both in $S_n^{n+1} \cup \{\omega_{n+1}\}$ and all $\sigma \in {^{\alpha}}2$ 
    and $\sigma' \in {^{\beta}}2$, if $\sigma'$ end-extends $\sigma$, then the family  
    $G(\sigma')$ extends the family $ G(\sigma)$.

\end{itemize}

We show that such a $G$ exists by recursion on the increasing enumeration of $S_n^{n+1} \cup \{ \omega_{n+1} \}$. Let $\alpha \in S_n^{n+1}$ and assume the coding has already been defined for all ordinals in $\alpha \cap S_n^{n+1}$. 

If $\alpha >\sup(\alpha  \cap S_n^{n+1})$ then $\alpha = \sup (\alpha \cap S_n^{n+1}) + \omega_n$, and we can use the last $\omega_n$ values of $\sigma \in {}^\alpha 2$ to code the values of all possible extensions of $\bigcup_{\gamma \in \alpha \cap S_n^{n+1}} G(\sigma \restriction \gamma)$ (here we are using that $\vert \mathrm{lev}_\alpha (T') \vert \leq \aleph_n$ for all $\alpha$.

If $\alpha = \sup(\alpha  \cap S_n^{n+1})$, we simply let $G(\sigma) = \bigcup_{\gamma \in \alpha \cap S_n^{n+1}} G(\sigma \restriction \gamma)$.

Finally, if $\sigma \in {}^{\omega_{n+1}} 2$, we let $G(\sigma) = \bigcup_{\alpha \in S_n^{n+1}} G(\sigma \restriction \alpha)$.

Now we construct a sequence $\langle \Phi^\alpha \mid \alpha < \omega_{n+1} \rangle $ of $(n+1)$-coherent families extending one another, where $\Phi^\alpha$ is indexed over $P_\alpha$, and $\Phi = \bigcup_{\alpha < \omega_{n+1}} \Phi^\alpha$ is nontrivial.

At limits let $\Phi^\lambda = \bigcup_{\alpha < \lambda} \Phi^\alpha$. At successors use Goblot's theorem and find $ \partial(\Theta^\alpha) =  \Phi^\alpha$. Again we consider the extension $\widetilde{\Theta}^\alpha$ defined as 

\[
\widetilde{\Theta}_{\vec{f}}^\alpha = 
\begin{cases}
\Theta^\alpha_{\vec{f}} & \text{if } \vec{f} \in P_\alpha^{(n)} \\
0 & \text{if } \vec{f} \in P_{\alpha+1}^{(n)} \setminus P_\alpha^{(n)}
\end{cases}
\]

Now, if $\mathrm{cf}(\alpha) < \omega_n$, then we simply let $\Phi^{\alpha+1} = \partial(\widetilde{\Theta}^\alpha)$.

If $\mathrm{cf}(\alpha) = \omega_n $ instead,  take the coherent nontrivial family $\Xi^\alpha$ $P_\alpha$ defined above such that $\Xi^\alpha \restriction Q_\alpha^{(n)} $ is nontrivial. Then let

\[
\widetilde{\Xi}_{\vec{f}}^\alpha = 
\begin{cases}
\Xi^\alpha_{\vec{f}} & \text{if } \vec{f} \in P_\alpha^{(n)} \\
0 & \text{if } \vec{f} \in P_{\alpha+1}^{(n)} \setminus P_\alpha^{(n)}
\end{cases}
\]

Finally, let $\Phi^{\alpha, 0} = \partial(\widetilde{\Theta}^\alpha     )$, and $\Phi^{\alpha, 1} = \partial( \widetilde{\Theta}^\alpha +\widetilde{\Xi}^\alpha)$. We need to decide which to use as our $\Phi^{\alpha+1}$. Let $\langle A_\alpha \mid \alpha \in S_n^{n+1} \rangle$ be a $\lozenge(S_n^{n+1})$-sequence. Consider $A_\alpha \in {}^\alpha2$. Recall that it codes a family $\Psi_{A_\alpha} = G(A_\alpha)$ indexed over $(H [T'\restriction \alpha])^{(n)}$, so in particular it codes some $\Psi_{A_\alpha} \restriction Q_\alpha^{(n)}$. If $\Psi_{A_\alpha} \restriction Q_\alpha^{(n)}$ does \textit{not} trivialize the family  $\Phi^\alpha \restriction Q_\alpha^{(n)}$, then let $\Phi^{\alpha +1} = \Phi^{\alpha,  \epsilon}$ for any $\epsilon \in \lB 0,1 \rB $. If it does, consider the following claim:

\begin{claim}
For every trivialization $ \Upsilon$ of $\Phi^\alpha \restriction Q_\alpha^{(n+1)} $ there exists $\epsilon \in \lB 0,1 \rB $ such that $\Upsilon$ does not extend to a trivialization of the family $\Phi^{\alpha, \epsilon} \restriction  (Q_\alpha \cup \lB f_\alpha \rB)^{(n+1)} $.
\end{claim}

\begin{proof}[Proof of Claim]

Suppose toward a contradiction that there exist $\Upsilon^0$ and $\Upsilon^1$ that trivialize $\Phi^{\alpha, 0} \restriction (Q_\alpha \cup \lB f_\alpha \rB)^{(n+1)} $ and  $\Phi^{\alpha, 1} \restriction (Q_\alpha \cup \lB f_\alpha \rB)^{(n+1)}$ respectively, and $\Upsilon^0 \restriction Q_\alpha^{(n)} = \Upsilon = \Upsilon^1 \restriction Q_\alpha^{(n)} $. 

We define $d_f(\Upsilon)$ by $( d_f (\Upsilon))_{\vec{h}}  = \Upsilon_{\vec{h}, f }  $ where $\vec{h}= (h_0, ..., h_{n-2}) $. If $f = f_\alpha$ and $\vec{g} = (g_0, ... , g_{n-1})$ ranges over $Q_\alpha^{(n)} $ we have (omitting restrictions to $I_{g_0} $ for readability):
\begin{align*}
\partial ( d_f ( \Upsilon^0)  )_{\vec{g}} 
&= \sum_{0 \leq i \leq n-1} (-1)^i (d_f (\Upsilon^0))_{\vec{g}^i} \\
&= \sum_{0 \leq i \leq n-1} (-1)^i \Upsilon^0_{\vec{g}^i, f} \\
&=^\ast  (-1)^{n-1} \Upsilon^0_{\vec{g}} + \Phi^{\alpha, 0}_{\vec{g}, f} \\
&= (-1)^{n-1} \Upsilon^0_{\vec{g}} + (-1)^n \Theta^\alpha_{\vec{g}}.
\end{align*}

By the same reasoning 
\[   \partial  ( d_f ( \Upsilon^1)  )_{\vec{g}}  =  (-1)^{n-1} \Upsilon^1_{\vec{g}} + (-1)^n \cdot ( \Theta^\alpha_{\vec{g}} + \Xi^{\alpha}_{\vec{g}} ),    \]

so that, since $\Upsilon^0 $ and $\Upsilon^1$ agree on $Q_\alpha^{(k)} $,
\[ \partial ( d_f ( \Upsilon^1 - \Upsilon^0 )  ) = (-1)^n \Xi^\alpha \restriction Q_\alpha^{(n)},  \]
contradicting the fact that $\Xi^\alpha \restriction Q_\alpha^{(n)}$ is nontrivial. 

Once again, the case $n = 1$ is computationally identical but conceptually different insofar as we need to stipulate that a family indexed by $0$-tuples is a singleton and that a function defined on $I(f)$ can be regarded as a trivialization of a $1$-family below $f$ as it can be extended to the entire grid $\omega_n \times \omega$.
\end{proof}

Then we let $\Phi^{\alpha+1} = \Phi^{\alpha, \epsilon}$ for the $\epsilon \in \{0,1 \}$ such that $\Psi_{A_\alpha} \restriction Q_\alpha^{(n)}$ does not extend to a trivialization of $\Phi^{\alpha, \epsilon} \restriction (Q_\alpha \cup \{ f_\alpha \})^{(n+1)}$.
Finally, let $\Phi = \bigcup_{\alpha < \omega_{n+1}} 
 \Phi^\alpha$. 

We check that $\Phi$  is nontrivial. Let $\Psi$ be any putative trivialization, and let $A \in {}^{\omega_{n+1}}2$ be such that  $G(A) = \Psi \restriction (\mathrm{im}(H))^{(n)}$. Let $ \alpha \in S_n^{n+1} $ be such that $A_\alpha = A\cap \alpha$. Then $\Psi_{A_\alpha} \restriction Q_\alpha^{(n)} = \Psi \restriction Q_\alpha^{(n)}$. By Lemma \ref{lemmarestriction}, $\Psi \restriction Q_\alpha^{(n)}  $ should trivialize $\Phi^\alpha \restriction Q_\alpha^{(n+1)}$. Therefore, $\Phi^{\alpha+1} \restriction (Q_\alpha \cup \{ f_\alpha \})^{(n+1)}$ is not trivialized by any extension of $\Psi_{A_\alpha} \restriction Q_\alpha^{(n)}$ such as $\Psi \restriction (Q_\alpha \cup \{ f_\alpha \})^{(n)}$. Again by Lemma \ref{lemmarestriction}, $\Psi$ is not a trivialization of $\Phi$. 
\end{proof}

\section{Conclusion}
\label{sect:conclusion}

Let us return, now, to item (ii) of our introduction's second paragraph, or in other words to the problem of understanding more general variants of the classical system $\mathbf{A}$.
For reasons outlined thereafter, the central remaining question in this direction is the following:
\begin{question}
\label{ques:conclusion}
Is it consistent with the $\mathsf{ZFC}$ axioms that $\mathrm{lim}^n\,\mathbf{A}_\lambda=0$ for all cardinals $\lambda$ and integers $n>0$?
\end{question}
\noindent The question is, at this point, a well-documented one: minor variations on it appear as \cite[Question 3]{Be17}, \cite[Question 7.10]{BHLH23}, and \cite[Question 5.7]{BeLH24}; see also \cite[Questions 7.3 and 7.1]{Ban23}, respectively, for a subinstance and more general formulation.
Moreover, as Bannister has very recently shown \cite{BanAk25}, an affirmative answer is in fact equivalent to the additivity of strong homology on all locally compact metric spaces. 
Any of these variants of Question \ref{ques:conclusion} should ultimately be regarded as asking just how additive a derived limit functor is compatible with the $\mathsf{ZFC}$ axioms.
This paper's results lend this question additional interest, both by ruling out the most optimistic scenario for its solution (namely, the extension of the result \cite[Theorem 5.1]{Be17} recorded in our introduction as \emph{Theorem} to all degrees $n>0$), and, more broadly, by identifying fundamental combinatorial possibilities which any positive solution to the question must sidestep.
Note also the relation to our introduction's item (i): if a main theme in the study of $\mathrm{lim}^n\,\mathbf{A}$ has been \emph{vertical} interactions (via the cardinal invariant $\mathfrak{d}$, for example) of ordinal combinatorics with product orderings like $({^\omega}\omega,\leq)$, it's their persistent \emph{horizontal} interactions in our analyses that seem to account for some of the difficulty of Question \ref{ques:conclusion}, and of research line (ii) more generally.
Note lastly two variations on this work's initiating question left open:
\begin{question}
Does $\mathrm{lim}^n\,\mathbf{A}=0$ for all integers $n>0$ imply that $\mathrm{lim}^n\,\mathbf{A}_\lambda=0$ for all cardinals $\lambda$ and integers $n>0$?
\end{question}
\begin{question}
Does there exist for each integer $n>0$ an accessible cardinal $\kappa$ such that $\mathrm{lim}^n\,\mathbf{A}_\kappa=0$ implies that $\mathrm{lim}^n\,\mathbf{A}_\lambda=0$ for all cardinals $\lambda\geq\kappa$?
\end{question}
\noindent Affirmative answers to either continue to figure as central scenarios for a positive solution to Question \ref{ques:conclusion}.
\bibliography{limnAkappa_bib}
\bibliographystyle{amsalpha}

\end{document}